\documentclass[12pt]{amsart}
\usepackage{amsaddr, amssymb }
\usepackage{enumitem}
\newcommand{\Cdb}{\mbox{$\mathbb{C}$}}

\newcommand{\Ndb}{\mbox{$\mathbb{N}$}}

\newcommand{\Rdb}{\mbox{$\mathbb{R}$}}

\newcommand{\A}{\mbox{${\mathcal A}$}}

\newcommand{\E}{\mbox{${\mathcal E}$}}
\newcommand{\F}{\mbox{${\mathcal F}$}}

\renewcommand{\H}{\mbox{${\mathcal H}$}}

\newcommand{\M}{\mbox{${\mathcal M}$}}
\newcommand{\N}{\mbox{${\mathcal N}$}}

\newcommand{\R}{\mbox{${\mathcal R}$}}
\renewcommand{\S}{\mbox{${\mathcal S}$}}

\newcommand{\tr}{\operatorname{tr}}
\newcommand{\Tr}{\operatorname{Tr}}
\newcommand{\Id}{\operatorname{Id}}
\newcommand{\La}{\Lambda}
\newcommand{\Ran}{\operatorname{Ran}}
\newcommand{\simp}{\underset{\geq 0}{\sim}}

\newtheorem{theorem}{Theorem}[section]
\newtheorem{lemma}[theorem]{Lemma}
\newtheorem{corollary}[theorem]{Corollary}
\newtheorem{proposition}[theorem]{Proposition}
\newtheorem{definition}[theorem]{Definition}
\theoremstyle{remark}
\newtheorem{remark}[theorem]{\bf Remark}
\theoremstyle{definition}

\numberwithin{equation}{section}

\begin{document}

\title[]{On the structure of contractively decomposable projections on noncommutative $L^p$-spaces and Schatten spaces}

\author{Estelle Boffy}
\address{Laboratoire de Math\'ematiques, Universit\'e Marie et Louis Pasteur
, 25030 Besan\c con Cedex, France}
\email{estelle.boffy@math.cnrs.fr}

\date{\today}

\begin{abstract} We show that the range of a contractively decomposable projection on a noncommutative Haagerup $L^p$-space, $L^p(\mathcal{M},\varphi)$,
 for $1<p<\infty$, is completely isometrically isomorphic to a corner of a noncommutative $L^p$-space, that is $eL^p(\mathcal{N},\psi)(1-e)$, with $e\in\mathcal{N}$ a projection. In the setting of Schatten spaces, we obtain a more precise description: the range of a contractively decomposable projection on $S^p(K,H)$ is isometric to an $\ell^p$ direct sum of subspaces of the form $S^p(K',H')$. Furthermore, we show that contractively 1-pseudo decomposable projections on Schatten spaces are automatically contractively decomposable, establishing the equivalence between these two notions in this setting.
\end{abstract}

\maketitle
{\it Keywords: Projections, complemented subspaces, Haagerup $L^p$-spaces, Schatten spaces, decomposable maps.

 2020 Mathematics subject classification: Primary 46L51, 47B10.}


\section{Introduction}\label{Intro}

The study of contractive projections and 1-complemented subspaces of Banach spaces is a major theme in the geometry of Banach spaces. Recall that a subspace $Y$ of a space $X$ is 1-complemented if it is the range of a linear contractive projection $P:X\to X$. For more information on this topic, we refer to the survey \cite{Randri}. In classical (i.e. commutative) $L^p$-spaces, a description of 1-complemented subspaces of $L^p(\Omega,\mathcal{F},\mu)$, for $(\Omega,\mathcal{F},\mu)$ a probability space, is given by Douglas for $p=1$, and by Ando for $1<p\neq 2<\infty$, in 1965 and 1966 respectively, see \cite{Ando}, and \cite{Douglas}. These results were generalized by Bernau and Lacey for an $L^p$-space on a general measured space (see \cite{Bernau_Lacey}). Note that in a smooth Banach space $X$, if $Y$ is a 1-complemented subspace of $X$, then there exists a \textit{unique} contractive projection $P:X\to X$ onto $Y$, see \cite[Theorem 6]{Cohen_Sullivan}. Hence, in $L^p$-spaces, for $1<p<\infty$, the study of contractive projections is equivalent to the study of 1-complemented subspaces.

In 1978 and in 1992, Arazy and Friedman described all 1-complemented subspaces of $S^p(K,H)$ in \cite{AF1} and \cite{AF2}, for $1\leq p\neq 2<\infty$ and for $H,K$ some separable Hilbert spaces. Recall that for $H$ and $K$ some Hilbert spaces, and for $1\leq p<\infty$, $S^p(K,H)$ denotes the Schatten space, that is the space of all bounded operators $x:K\to H$ such that $\Tr(|x|^p)<\infty$, where $\Tr$ denotes the usual trace on $B(K)$. In 2009, Le Merdy, Ricard and Roydor used the result of Arazy and Friedman to obtain a description of all  completely 1-complemented subspaces of $S^p(K,H)$, for $1\leq p\neq 2<\infty$, see \cite{MRR}. In 2026, a description of positively 1-complemented subspaces of $S^p(H):=S^p(H,H)$, for $1\leq p<\infty$, is given in \cite{B}, relying on the result of Arazy and Friedman. On a general noncommutative $L^p$-space, denoted by $L^p(\M)$ (resp. $L^p(\M,\varphi)$ for the Haagerup $L^p$-spaces), that is an $L^p$-space defined from a von Neumann algebra $\M$, equipped with a normal semifinite faithful trace $\tau$ (resp. weight $\varphi$), there is no description of 1-complemented subspaces. Nevertheless, significant progress has been made under an additional assumption of positivity on the contractive projection. In 2024, in \cite{AR}, Arhancet and Raynaud showed that the range of a 2-positive contractive projection on noncommutative $L^p$-space is completely order isometric to a noncommutative $L^p$-space. 
The contractive and positive projections in noncommutative $L^p$-spaces are studied by Arhancet in \cite{A_positiveproj}. It is proved that in a wide range of cases, the range of a contractive and positive projection on $L^p(M)$ is isometric to a nonassociative $L^p$-space associated with a $JW^*$-algebra.
Moreover, Arhancet studied the contractively decomposable projections on noncommutative $L^p$-spaces in \cite{A_dec}. In the latter paper, the notion of decomposable and $n$-pseudo decomposable map between noncommutative Haagerup $L^p$-spaces is introduced, we recall these definitions below.

The definition of decomposable map between noncommutative $L^p$-spaces is given in \cite{JungeRuan}. It is an adaptation of the classical notion of the case $p=\infty$. See \cite{Haa_dec} for more information of this classical notions, and \cite{Arh_Krieg} for the case $1\leq p<\infty$.

\begin{definition}\label{dec_on_Lp}
Let $1\leq p \leq \infty$, and let $T : L^p(\M,\varphi)\to L^p(\N,\psi)$ a linear map between noncommutative Haagerup $L^p$-spaces on von Neumann algebras $\M$ and $\N$ equipped with normal faithful state $\phi$ and $\psi$ respectively. We denote by $T^\circ$ the map defined by $T^\circ(x) := T(x^*)^*$, for any $x\in L^p(\M,\varphi)$. $T$ is called decomposable if there exist bounded linear maps $v_1,v_2 : L^p(\M,\varphi)\to L^p(\N,\psi)$ such that the map \begin{equation}\label{def_dec}
    \Phi := \left[\begin{array}{cc}
    v_1 & T \\
    T^\circ & v_2
\end{array}\right] : S^p_2(L^p(\M,\varphi))\to S^p_2(L^p(\N,\psi))
\end{equation} defined by \[\Phi\left(\left[\begin{array}{cc}
    a & b \\
    c & d
\end{array} \right]\right) = \left(\left[\begin{array}{cc}
    v_1(a) & T(b) \\
    T^\circ(c) & v_2(d)
\end{array} \right]\right)\] is completely positive. In this case, define \[\|T\|_\mathrm{dec} := \inf\max\{\|v_1\|,\|v_2\|\},\] where the infimum is taken over all maps $v_1$ and $v_2$ such that the map $\Phi$ given in (\ref{def_dec}) is completely positive.
\end{definition}
 The map $\|\cdot\|_\mathrm{dec}$ is a complete norm on the space of all decomposable maps from $L^p(\M,\varphi)$ to $L^p(\N,\psi)$. According to \cite[Proposition 3.5]{Arh_Krieg}, we can choose $v_1$ and $v_2$ such that $\|T\|_\mathrm{dec} = \max\{\|v_1\|,\|v_2\|\}$, that is the infimum in the definition of $\|\cdot\|_\mathrm{dec}$ is attained.
$T$ is contractively decomposable if \(\|T\|_\mathrm{dec}\leq 1.\)

\begin{definition}\label{def_n_pseudo_dec}
In the previous situation, if there exists a map $\Phi = \left[\begin{array}{cc}
    v_1 & T \\
    T^\circ & v_2
\end{array}\right]$ which is $n$-positive, for $1\leq n\leq\infty$, we say that $T$ is \textit{$n$-pseudo-decomposable}, and we can define \[\|T\|_{n,\mathrm{dec}} := \inf\max\{\|v_1\|,\|v_2\|\},\] where the infimum is taken over all such maps $v_1$ and $v_2$.
\end{definition} Similarly to the case $n=\infty$, this infimum is a minimum, we can choose $v_1$ and $v_2$ such that $\|T\|_{n,\mathrm{dec}}=\max\{\|v_1\|,\|v_2\|\}$, see \cite[Proposition 4.3]{A_dec}. This is a norm on the space of all $n$-pseudo-decomposable maps from $L^p(\M,\varphi)$ to $L^p(\N,\psi)$. This notion is introduced in \cite[Definition 4.1]{A_dec}. Note that if $n=\infty$, we recover Definition \ref{dec_on_Lp}. If $T : L^p(\M,\varphi)\to L^p(\N,\psi)$ is decomposable, then it is $n$-pseudo decomposable, for every $1\leq n\leq \infty$, and we have \(\|T\|_{n,\mathrm{dec}}\leq \|T\|_\mathrm{dec}.\)

The main result of \cite{A_dec} is that for a contractively $n$-pseudo decomposable projection on a noncommutative Haagerup $L^p$-spaces, $P:L^p(\M,\varphi)\to L^p(\M,\varphi)$, with $1\leq n\leq \infty$, $1<p<\infty$, $\M$ a $\sigma$-finite von Neumann algebra, and $\varphi$ a normal faithful state on $\M$, there exist positive elements $h,k\in L^p(\M,\varphi)$ with support projections denoted by $\operatorname{s}(h)$ and $\operatorname{s}(k)$ respectively, and there exists a weak$^*$ continuous contractively $n$-pseudo decomposable projection $w:\operatorname{s}(h)\M\operatorname{s}(k)\to \operatorname{s}(h)\M\operatorname{s}(k)$ such that \[P\left(h^\frac{1}{2}xk^\frac{1}{2}\right) = h^\frac{1}{2}w(x)k^\frac{1}{2},\quad x\in \operatorname{s}(h)\M\operatorname{s}(k).\]

In the present paper, we rely on results contained in \cite{A_dec} to show that the range of a contractively decomposable projection can be seen as a corner of a noncommutative $L^p$-space. We only consider $\sigma$-finite von Neumann algebras. Note that any $\sigma$-finite von Neumann algebra admits a faithful normal state. Here is the first main result of this paper:

\begin{theorem}\label{main1}
Let $1<p<\infty$, let $\M$ be a $\sigma$-finite von Neumann algebra equipped with a normal faithful state $\varphi$. Consider a bounded map \(P: L^p(\M,\varphi)\to L^p(\M,\varphi)\).
If $P$ is a contractively decomposable projection, then there exists $\N$ a $\sigma$-finite von Neumann algebra, equipped with a normal faithful positive linear form $\tilde{\phi}$, and a projection $e_1\in \N$ such that the subspace $\Ran(P)$ is completely isometrically isomorphic to \(e_1L^p(\N,\tilde\phi)(1-e_1).\)

\end{theorem}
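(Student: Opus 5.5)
The strategy is to combine the structural result of \cite{A_dec} with a $2\times 2$ matrix trick, the "linking algebra" device. Applying the main result of \cite{A_dec} with $n=\infty$ gives positive $h,k\in L^p(\M,\varphi)$ and a weak$^*$ continuous contractively decomposable projection $w$ on the corner $\operatorname{s}(h)\M\operatorname{s}(k)$. These satisfy $P(h^{1/2}xk^{1/2})=h^{1/2}w(x)k^{1/2}$. Since $P$ is decomposable with $\|P\|_{\mathrm{dec}}\le 1$, we may also choose $v_1,v_2$ with $\|v_i\|\le 1$ such that $\Phi=\left[\begin{smallmatrix} v_1 & P\\ P^\circ & v_2\end{smallmatrix}\right]$ is completely positive. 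The plan is to replace $P$ by a completely positive contractive map $Q$ on $L^p(M_2(\M),\varphi\otimes\tr)=S^p_2(L^p(\M,\varphi))$, built from $\Phi$ and arranged to be idempotent. Then $\Ran(P)$ sits as the off-diagonal $(1,2)$ corner of $\Ran(Q)$.

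\textbf{Step 1: make the diagonal entries projections.} The maps $v_1,v_2$ need not be idempotent, so I will use the finer information from \cite{A_dec}. At the von Neumann algebra level, the proof there should provide a completely positive map $W=\left[\begin{smallmatrix} w_1 & w\\ w^\circ & w_2\end{smallmatrix}\right]$ on the linking algebra $\left[\begin{smallmatrix} \operatorname{s}(h)\M\operatorname{s}(h) & \operatorname{s}(h)\M\operatorname{s}(k)\\ \operatorname{s}(k)\M\operatorname{s}(h) & \operatorname{s}(k)\M\operatorname{s}(k)\end{smallmatrix}\right]$ that is unital, or at least subunital. If it does not, I will replace $W$ by a weak$^*$ cluster point of the Cesàro averages $\frac1N\sum_{j=1}^N W^j$. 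This keeps the off-diagonal corner equal to $w$, since $w$ is idempotent, and yields a normal unital completely positive idempotent $E$ on the linking algebra $\mathcal{L}$. Normality of the limit is the delicate point; I expect to get it by working with the predual, or via the $L^p$ compression with $h,k$.

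\textbf{Step 2: identify the range.} By Effros--Størmer (Choi--Effros), the range of a normal unital completely positive projection on $\mathcal{L}$ is a von Neumann algebra $\N$ for the Choi--Effros product. When $E$ is not faithful, I will first restrict to the support of a normal faithful $E$-invariant state, obtained by composing $\varphi$ with $E$, so that $E$ becomes a faithful normal conditional expectation onto the subalgebra $\N=\Ran(E)$. The diagonal projection $e_1=\left[\begin{smallmatrix}\operatorname{s}(h)&0\\0&0\end{smallmatrix}\right]$ is fixed by $E$, hence lies in $\N$, and the $(1,2)$ corner of $\N$ is exactly $\Ran(w)$.

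\textbf{Step 3: transfer to $L^p$.} Set $D=\left[\begin{smallmatrix} h&0\\0&k\end{smallmatrix}\right]$ and let $\tilde\phi$ be the positive normal form on $\N$ with density $D^p$, suitably normalized. This form is faithful on $\N$ after cutting by supports. Since $E$ is a $\tilde\phi$-preserving conditional expectation, the standard identification of $L^p(\N,\tilde\phi)$ with the closure of $D^{1/2}\N D^{1/2}$ inside $L^p(\mathcal{L})$ holds completely isometrically (Junge--Xu / Haagerup). Cutting by $e_1$ on the left and $1-e_1$ on the right gives the closure of $h^{1/2}\Ran(w)k^{1/2}$, which equals $\Ran(P)$. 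The operator space structure on $e_1L^p(\N)(1-e_1)$ is inherited from $S^p_n(L^p(\N))\subset S^p_n(L^p(\mathcal{L}))$, which restricts to that of $S^p_n(L^p(\M))$ on the $(1,2)$ corner. This makes the isomorphism complete.

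The main obstacle is Steps 1--2. One must produce a genuinely normal, idempotent, faithful completely positive extension $E$ whose off-diagonal corner is $w$, and check that $\varphi$ restricted via $D$ is invariant so that the $L^p$ embedding is isometric. I would first try to extract $E$ directly from the proof in \cite{A_dec}, where the diagonal maps arise from $v_1,v_2$ via the same $h,k$ compression. Only if that fails would I fall back on the averaging argument.
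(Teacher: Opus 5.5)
Your architecture matches the paper's. You lift a completely positive block map through $H=\left[\begin{smallmatrix} h&0\\ 0&k\end{smallmatrix}\right]$ (your $D$) to the linking algebra $\N=eM_2(\M)e$, show the lift restricts to a faithful normal conditional expectation onto a subalgebra $\tilde{\N}\ni e_1$, and transfer back. Your Step 3 is essentially the identity $\Phi=\sigma^{-1}\kappa Q_p\kappa^{-1}\sigma\Psi$ from the proof of Proposition~\ref{phi_contr_Lp}, which gives both the contractivity of $\Phi$ and $\Ran(\Phi)=\sigma^{-1}\kappa(L^p(\tilde{\N},\tilde{\phi}))$. Your first option for Step 1, extracting $E$ from \cite{A_dec}, is also what the paper does.

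As written, however, the step you call the main obstacle is a genuine gap, and your fallbacks would not close it. Theorem~\ref{lifting thm} lifts relative to $\Phi(H)$, not $H$. A relation $\Phi(H^{1/2}xH^{1/2})=H^{1/2}W(x)H^{1/2}$ with $W$ unital (resp.\ subunital) forces $\Phi(H)=H$ (resp.\ $\Phi(H)\le H$), i.e.\ $v_1(h)=h$ and $v_2(k)=k$, and this fails for arbitrary $v_1,v_2$. Moreover, a weak$^*$ Ces\`aro cluster point on a von Neumann algebra is idempotent but in general not normal.

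The missing idea is to make the diagonal idempotent \emph{at the $L^p$ level, before lifting}. The map $\Phi^j=\left[\begin{smallmatrix} v_1^j&P\\ P^\circ&v_2^j\end{smallmatrix}\right]$ is completely positive. Since $v_1,v_2$ are contractions on the reflexive space $L^p$, the entrywise Ces\`aro means converge strongly (mean ergodic theorem) to a completely positive projection $\left[\begin{smallmatrix} P_1&P\\ P^\circ&P_2\end{smallmatrix}\right]$ with $P_1,P_2$ contractive; this is the content of Proposition~\ref{Phi_proj}. Then choose $h\in\Ran(P_1)$ and $k\in\Ran(P_2)$ of full support (Proposition~\ref{existence_h}). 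Now $\Phi(H)=H$, so the lift $Q$ is automatically unital and normal, and $Q|_{\N}$ is idempotent.

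The invariance of $\psi=\Tr_{\varphi_2}(H^p\,\cdot\,)$, which you only flag, is what makes the rest work, and it has a short proof. Since $P_1,P_2$ are contractive projections fixing $h,k$ and $L^p$ is smooth, uniqueness of norming functionals gives $P_1^*(h^{p-1})=h^{p-1}$ and $P_2^*(k^{p-1})=k^{p-1}$. Hence $\Phi^*(H^{p-1})=H^{p-1}$; pairing against a block-diagonal element only involves the diagonal entries, so no contractivity of $\Phi$ is used. It follows that $\Tr_{\varphi_2}(H^pQ(x))=\Tr_{\varphi_2}(H^{p-1}\Phi(H^{1/2}xH^{1/2}))=\Tr_{\varphi_2}(H^px)$. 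Since $\operatorname{s}(H)=e$, $\psi$ is faithful on $\N$, so $Q|_{\N}$ is faithful. By \cite[Proposition 4.1]{AR} it is then a conditional expectation onto a von Neumann subalgebra $\tilde{\N}$. Invariance of a faithful normal form is also what would make a Ces\`aro cluster point normal, so your fallback hinges on the same point.

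Your Step 2 detour through the support of an $E$-invariant state is therefore unnecessary. It is also unclear how $H$ and the corner $e_1(\cdot)(1-e_1)$ would survive such a compression.

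Two smaller points in Step 3:
\begin{enumerate}
\item The equality $\Ran(P)=\overline{h^{1/2}\Ran(w)k^{1/2}}$ needs $\Ran(P)=\operatorname{s}(h)\Ran(P)\operatorname{s}(k)$. This follows from $\Phi(exe)=\Phi(x)$ (Remark~\ref{ran(P)nondegenimpliesran(Phi)nondegen}).
\item Identifying $eL^p(M_2(\M),\varphi_2)e$ with $L^p(\N)$ requires passing to a form with $e$ in its centralizer and checking that $D_\psi$ corresponds to $H^p$. This is what the paper's $\phi$, $\kappa$, $\sigma$ and (\ref{D=}) accomplish.
\end{enumerate}
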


Note that this theorem implies that the range of a contractively decomposable projection is an interpolation space associated with the $W^*$-TRO $e_1\N(1-e_1)$. See \cite[Remark 5.6]{A_dec}. Moreover, a subspace of the form $eL^p(\N,\psi)(1-e)$, with $e\in\M$ an orthogonal projection, is the range of a contractively decomposable projection. This result is detailed in \cite[Proposition 5.2]{A_dec}.

To prove Theorem \ref{main1}, we follow the proof of the main Theorem of \cite{A_dec} (see \cite[Theorem 1.1]{A_dec}) to obtain a completely positive projection \[\Phi := \left[\begin{array}{cc}
    P_1 & P \\
    P^\circ & P_2
\end{array}\right]: S^p_2\left(L^p(\M,\varphi)\right)\to S^p_2\left(L^p(\M,\varphi)\right),\] with $P_1,P_2 : L^p(\M, \varphi)\to L^p(\M,\varphi)$ some completely positive contractive projections. The main novelty is showing that $\Phi$ is contractive. Once this is established, we apply \cite[Theorem 1.1]{AR} to obtain the description of $\Ran(\Phi)$, and deduce the description of $\Ran(P)$.

In the setting of Schatten spaces, we obtain a more precise description of contractively decomposable projections, relying on \cite{B}. Note that the definitions of $n$-pseudo decomposable map may be adapted for an operator $T:S^p(K,H)\to S^p(K',H')$, see Definition \ref{dec_on_Sp} in Section \ref{section dec Sp}. We show, in particular, that if $P:S^p(K,H)\to S^p(K,H)$ is a contractively decomposable projection, with $1<p<\infty$, $K,H$ some separable Hilbert spaces, then the range of $P$ is isometric to an $\ell^p$ direct sum of subspaces of the form $S^p(K',H')$. Note that for $p\neq 2$, the result directly follows from the paper \cite{MRR}, since a contractively decomposable map is completely contractive, but it is new for $p=2$. The major part of the result on Schatten spaces is to show that if $P:S^p(K,H)\to S^p(K,H)$ is a contractively 1-pseudo decomposable projection, with $1<p<\infty$, then it is a contractively decomposable projection. The result is stated in the following theorem:

\begin{theorem}\label{main2}
Let $H$, $K$ be Hilbert spaces, let $1<p <\infty$, and let $X$ be a subspace of $S^p(K,H)$. The following assertions are equivalent.
\begin{enumerate}
    \item $X$ is 1-complemented with a contractively 1-pseudo decomposable projection,
    \item $X$ is 1-complemented with a contractively decomposable projection,
    \item there exist, for a set $A$, two families of indices $(I_\alpha)_{\alpha\in A}$ and $(J_\alpha)_{\alpha \in A}$, a family of Hilbert spaces $(H_\alpha)_{\alpha\in A}$, some positive and injective operators $a_\alpha \in S^p(K_\alpha)$ and two isometries \[U : \underset{\alpha\in A}{\overset{2}{\oplus}} \ell^2_{J_\alpha}(K_\alpha)\to K\quad \text{ and }\quad V : \underset{\alpha\in A}{\overset{2}{\oplus}} \ell^2_{I_\alpha}(K_\alpha)\to H\] such that \[X = V\left(\bigoplus_{\alpha\in A}^p S^p_{I_\alpha,J_\alpha}\otimes a_\alpha\right)U^*.\]
\end{enumerate}
\end{theorem}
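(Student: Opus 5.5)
The plan is to prove the cycle $(2)\Rightarrow(1)\Rightarrow(3)\Rightarrow(2)$.

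\emph{$(2)\Rightarrow(1)$.} This is immediate. A completely positive $\Phi$ is in particular $1$-positive, so $\|P\|_{1,\mathrm{dec}}\leq\|P\|_{\mathrm{dec}}\leq 1$.

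\emph{$(3)\Rightarrow(2)$.} I would construct the projection explicitly. On each block $S^p_{I_\alpha,J_\alpha}\otimes S^p(K_\alpha)$, take
\[
Q_\alpha(y)=\sum_{i,j}\frac{\Tr(y_{ij}a_\alpha^{p-1})}{\Tr(a_\alpha^p)}\,e_{ij}\otimes a_\alpha,
\]
the normalized conditional-expectation-type map of Le Merdy--Ricard--Roydor/Arazy--Friedman. Then set
\[
P(x)=V\Big(\bigoplus_\alpha Q_\alpha\big(p_\alpha V^*xU q_\alpha\big)\Big)U^*,
\]
where $p_\alpha,q_\alpha$ are the block projections. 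The claim is that $P$ has the form $x\mapsto V\,\mathbb{E}(\ldots)U^*$ and realizes $X$ as a corner $eL^p(\mathcal N)(1-e)$ of a Schatten-type space. Write $\mathcal N$ for the direct sum of the $B(\ell^2_{I_\alpha}\oplus\ell^2_{J_\alpha})\otimes \mathbb{C}a_\alpha$-type pieces, sitting inside $S^p(K\oplus H)$ via $U\oplus V$. One then takes the conditional-expectation projection $\mathbb{E}$ on $S^p(K\oplus H)$ onto $(U\oplus V)L^p(\mathcal N)(U\oplus V)^*$. This $\mathbb{E}$ is completely positive and completely contractive, and its off-diagonal corner is $P$, while its diagonal corners $P_1,P_2$ are completely positive contractions. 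Therefore $\Phi=\begin{bmatrix}P_1&P\\P^\circ&P_2\end{bmatrix}$ is completely positive with $\max\|P_i\|\le1$. This is essentially \cite[Proposition 5.2]{A_dec} transported to Schatten spaces.

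\emph{$(1)\Rightarrow(3)$.} This is the core step. Let $\Phi=\begin{bmatrix}v_1&P\\P^\circ&v_2\end{bmatrix}$ be $1$-positive (that is, positive) on $S^p(K\oplus H)$ with $\|v_i\|\le1$. The plan is as follows.
\begin{enumerate}
\item Following the proof of Theorem \ref{main1} (and \cite[Theorem 1.1]{A_dec}), replace $v_1,v_2$ by contractive positive projections $P_1,P_2$, for instance via Cesàro averages or weak$^*$ limits of iterates of $\Phi$. This yields a positive \emph{projection} $\Phi$ with the same off-diagonal corner $P$.
\item Show that this $\Phi$ is contractive on $S^p(K\oplus H)$. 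For this, use the $2\times2$ block inequality $\|[x_{ij}]\|_p^p$ together with the positivity of $\Phi$ to control the off-diagonal by the diagonal, mimicking the argument announced for Theorem \ref{main1}. Here only positivity, not complete positivity, should be needed.
\item Apply the description of positively $1$-complemented subspaces of $S^p(K\oplus H)$ from \cite{B}. This gives $\Ran\Phi$ as an $\ell^p$-sum of pieces of the form $W(S^p_{I}\otimes a)W^*$, together with possibly non-associative (Jordan/spin-type) pieces.
\item Compress to the $(1,2)$ corner. Since $\Ran\Phi$ is invariant under $\begin{bmatrix}1&0\\0&0\end{bmatrix}\cdot\begin{bmatrix}0&0\\0&1\end{bmatrix}$-type compressions (because $\Phi$ commutes with the corner decompositions), the corner $\Ran P$ is the off-diagonal part of each block. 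This produces the matrices $S^p_{I_\alpha,J_\alpha}\otimes a_\alpha$ and the isometries $U,V$.
\end{enumerate}

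The main obstacle is step 4 combined with the Jordan-type pieces of \cite{B} in step 3. I must show that the positive projection's range, being stable under the corner symmetry $x\mapsto \mathrm{diag}(1,-1)\,x\,\mathrm{diag}(1,-1)$ (which commutes with $\Phi$ since $\Phi$ acts blockwise), cannot contain spin-factor or transpose-type blocks with nonzero off-diagonal contribution. Equivalently, the off-diagonal corner of each block must be a genuine rectangular $S^p_{I,J}\otimes a$. I would try first to show that a $\mathbb{Z}_2$-grading of a JW-type block compatible with a projection $e$ forces the block to be associative, or else to have trivial off-diagonal part.
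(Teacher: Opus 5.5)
Your cycle of implications and the overall architecture of $(1)\Rightarrow(3)$ match the paper. $(2)\Rightarrow(1)$, $(3)\Rightarrow(2)$ and Step~1 are fine: for $(3)\Rightarrow(2)$ the paper likewise views $X$ as the corner of $\bigoplus_\alpha S^p_{I_\alpha\sqcup J_\alpha}\otimes a_\alpha$, whose projection is completely positive. The gap is in $(1)\Rightarrow(3)$, at the two steps that carry its content.

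\textbf{Step 2 (contractivity of $\Phi$).} This is not an argument, and the mechanism you suggest cannot work. For $p\neq 2$ the $S^p$-norm of a $2\times 2$ block matrix is not bounded with constant $1$ by the norms of its entries. Positivity only gives estimates such as $\|b\|_p\le\|a\|_p^{1/2}\|d\|_p^{1/2}$ for positive block matrices, which say nothing about $\|\Phi(x)\|_p$ for general $x$. Whether a contractive $\Phi$ exists at all for a contractively decomposable map is unknown in general, which is why this step is the paper's main new point.

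The paper's proof (Proposition~\ref{phi_contr_Lp}, applied with $\M=B(H\oplus K)$ in Lemma~\ref{phi_proj_Sp}) uses that $\Phi$ is a \emph{projection}:
\begin{enumerate}
\item Take $h\in\Ran P_1$ and $k\in\Ran P_2$ positive with maximal support, and set $D=\mathrm{diag}(h,k)$, $e=\operatorname{s}(D)$.
\item The lifting theorem gives $\Phi(D^{1/2}xD^{1/2})=D^{1/2}Q(x)D^{1/2}$ with $Q$ unital, normal and positive.
\item $Q$ restricted to $\N=eM_2(\M)e$ is a faithful projection preserving $\psi=\Tr(D^p\,\cdot\,)$, because $\Phi^*(D^{p-1})=D^{p-1}$.
\item Theorem~\ref{HJX} makes the $L^p(\N,\psi)$-extension $Q_p$ contractive.
\item After the change-of-weight isometries, $\Phi$ is $Q_p$ composed with $x\mapsto exe$, hence contractive.
\end{enumerate}

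\textbf{Step 4 (excluding the other types).} The criterion you plan to use is too weak, and the lemma you propose is false. Conjugation by $w=\mathrm{diag}(1,-1)$ only says that $\Phi$ preserves the diagonal/off-diagonal grading. Two examples show this does not suffice:
\begin{enumerate}
\item The symmetric matrices $\S_2$ are the range of the positive contractive projection $x\mapsto(x+x^\top)/2$, which commutes with $x\mapsto wxw$. Yet $\S_2$ has nonzero odd part $\mathbb{C}(e_{12}+e_{21})$.
\item $\E_{2N}$ is invariant under conjugation by the symmetry $s_1$, with nonzero odd part $\mathrm{span}\{s_j: j\neq 1\}$.
\end{enumerate}
You did note that $\Ran\Phi$ is stable under the corner compression, and that is what must be kept. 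The right tool is the \emph{one-sided} commutation $L_{p_H}\circ\Phi=\Phi\circ L_{p_H}$ (Lemma~\ref{corner_preserv}). It gives $p_H\Ran(\Phi)(1-p_H)\subseteq\Ran\Phi$ and $(1-2p_H)\Ran\Phi=\Ran\Phi$; for instance, compressing $e_{12}+e_{21}$ gives $e_{12}\notin\S_2$.

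You then still need two things.
\begin{enumerate}
\item This property must descend to each indecomposable summand, with a nontrivial $p_\alpha$ and nonzero corner (Lemma~\ref{phi_diago_de_cornerpreserving}). The paper uses $\mathrm{diag}(h,0)\in\Ran\Phi$, whose support is $p_H$, together with non-degeneracy of $\Ran P$.
\item The four non-associative types must be excluded by explicit computation (Proposition~\ref{descriptiondephi_cornerpreserving}):
\begin{enumerate}
\item For $O\S_I$, $O\A_I$ and the transpose pairs: $\operatorname{s}_\ell(y^\top)=\operatorname{s}_r(y)^\top$ yields $p_1^\top=O^*p_1O$. This turns $Oy=p_1Oy(1-p_1)$ into $Oy=(1-p_1)Oyp_1$, so $Oy=0$.
\item For the spin types: $w=1-2p_1$ is a selfadjoint unitary in the spin span. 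Lemma~\ref{Lemme2_spin} forces $w$ to commute with $v$ and $y$, so $vy=p_1vy(1-p_1)=0$.
\end{enumerate}
\end{enumerate}
In each excluded case the corner is $0$, contradicting $\Ran P\neq 0$. Only $S^p_I\otimes a$ survives, and its corners are exactly the rectangular blocks $S^p_{I,J}\otimes a$ you need.
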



\vspace{0.5cm}

\textbf{Structure of the paper} 
\begin{itemize}
    \item \textbf{Section 2} reviews the definition and properties of noncommutative Haagerup $L^p$-spaces.
    
    \item \textbf{Section 3} is devoted to the description of contractively decomposable projection on noncommutative Haagerup $L^p$-spaces. We recall the lifting theorem from \cite{AR}, some results about the support projection of the range of a contractive and positive projection, and properties about contractively $n$-pseudo decomposable projections from \cite{A_dec}. Then, we prove Theorem \ref{main1}.
    
    \item \textbf{Section 4} investigates the contractively $n$-pseudo decomposable projections in Schatten spaces. The first subsection gives some preliminaries results about decomposable (and $n$-pseudo decomposable) maps on $S^p(K,H)$, and then recalls the description of positively 1-complemented subspaces of $S^p(H)$, from \cite{B}. A second subsection gives the description of the subspaces of $S^p(H)$ that are the range of a completely positive and contractive projection. This result is used in the proof of the part $(3)\Rightarrow (2)$ of Theorem \ref{main2}. The last subsection studies the $M_2$-splitting positive contractive projections on Schatten spaces, to finally obtain the description of the range of a contractively 1-pseudo decomposable projection in $S^p(K,H)$ (which is the part $(1)\Rightarrow (3)$ of Theorem \ref{main2}).
    
\end{itemize}







\section{Preliminaries on noncommtative Haagerup $L_p$ spaces}


In this subsection, we briefly recall the construction of the noncommutative Haagerup $L^p$-spaces associated with a von Neumann algebra and a normal faithful positive linear functional $\varphi$. The construction of such spaces can actually be done if $\varphi$ is a normal faithful weight on $\M$, the description is a little simpler in the case of a linear functional. We refer the reader to \cite{HJX}, \cite{Hiai} and \cite{Terp} for details.

Let $\M\subset B(H)$ be a von Neumann algebra on a Hilbert space $H$, let $\varphi$ be a normal faithful positive linear functional on $\M$. We denote by $\sigma^\varphi = (\sigma^\varphi_t)_{t\in\mathbb R}$ the one-parameter modular automorphism group associated to $\varphi$ (see \cite[Chap. VIII]{Takesaki2}). Let $\R := \M \rtimes_{\sigma^\varphi}\Rdb\subset B(L^2(\Rdb,H))$, be the crossed product of $\M$ by $\Rdb$ via $\sigma^\varphi$ (see \cite[Chap. X]{Takesaki2}). We may identify $\M$ with a subalgebra of $\R$.

We define the dual action $\widehat{\sigma^\varphi} : \Rdb\to Aut(\R)$ of $\sigma^\varphi$ by \[\widehat{\sigma^\varphi_t}(x) := W(t)xW(t)^*,\quad t\in\Rdb,~x\in \R,\] where $W(t)\in B(L^2(\Rdb,H))$ is defined by $\left[W(t)\xi\right](s) := e^{-its}\xi(s)$, for $s,t\in\Rdb$ and $\xi\in L^2(\Rdb,H)$.
Note that \begin{equation}\label{M=Linfini}
    \M = \left\{x\in\R,\quad \widehat{\sigma^\varphi_t}(x) = x,\quad \text{for every }t\in\Rdb\right\}.
\end{equation}

There exists a unique normal semifinite faithful trace $\tau_\varphi = \tau$ on $\R$ such that \(\tau\circ \widehat{\sigma^\varphi_t} = e^{-t}\tau\) for every $t\in\Rdb$ (see \cite[Theorem 8.15]{Hiai}). We consider the $*$-algebra $L^0(\R,\tau)$ of $\tau$-measurable operators (see \cite[Chap. 4]{Hiai}), and for $1\leq p\leq \infty$, define \[L^p(\M,\varphi) := \left\{y\in L^0(\R,\tau),\quad \widehat{\sigma^\varphi_t}(y) = e^{-t/p}y,~\text{ for every }t\in\Rdb\right\}.\] This is a $*$-subalgebra of $L^0(\R,\tau)$. Note that according to (\ref{M=Linfini}), $L^\infty(\M,\varphi)=\M$. We can define a positive cone for this subspace: \[L^p(\M,\varphi)_+:=L^p(\M,\varphi)\cap L^0(\R,\tau)_+.\] Consider $x\in L^p(\M,\varphi)$ and let $x = u|x|$ its polar decomposition. Then, $u\in \M,$ and $|x|\in L^p(\M,\varphi)$. We denote by $s_\ell(x)\in\M$ and $s_r(x)\in\M$ the right and left support projection of an element $x\in L^p(\M,\varphi)$. 

Consider $\psi\in\M^+_*$ a normal positive linear functional on $\M$. It induces a normal semifinite faithful weight $\widehat{\psi}$ on $\R$ called the dual weight (see \cite[Chap. X,§1]{Takesaki2}). We denote $h_\psi$ the Radon-Nikodym derivative of $\widehat{\psi}$ with respect to $\tau$. Then $h_\psi \in L^1(\M,\varphi)_+$ and \[\widehat{\psi}(x) = \tau\left(h_\psi^{\frac{1}{2}} xh_\psi^{\frac{1}{2}}\right),\quad x\in\R_+.\] Moreover, the map $\psi\mapsto h_\psi$ is a bijection from $\M^+_*$ onto $L^1(\M,\varphi)_+$ which naturally extends to an isomorphism from $\M_*$ onto $L^1(\M,\varphi)$ (also denoted by $\psi\mapsto h_\psi$).

We equip the space $L^1(\M,\varphi)$ with the norm inherited from $\M_*$: \[\|h_\psi\|_1:=\|\psi\|_{\mathcal{M}_*},\quad \psi \in \M_*.\] 
Then, we define a norm on $L^p(\M,\varphi)$, for $x\in L^p(\M,\varphi)$, set \[\|x\|_p := \left\| |x|^p\right\|_1^{1/p}.\]
Note that if $\varphi$ is tracial, that is \(\varphi(x^*x)=\varphi(xx^*),\) for $x\in\M$, then the space $L^p(\M,\varphi)$ defined here coïncide isometrically with the classical tracial noncommutative $L^p$-space (see \cite[p.62]{Terp}. 

Using the isomorphism $\psi\in\M_*\mapsto h_\psi\in L^1(\M,\varphi)$, we can define a linear functional \(\Tr_\varphi : L^1(\M,\varphi)\to \Cdb\) defined by \begin{equation}
    \Tr_\varphi(h_\psi) := \psi(1).
\end{equation}
This linear functional is continuous on $L^1(\M,\varphi)$, for every $x\in L^1(\M,\varphi)$, \[\left|\Tr_\varphi(x)\right|\leq \|x\|_1.\] 
Moreover, we have a version of Hölder's inequality. Let $1\leq p,q,r\leq \infty$ with $\frac{1}{p}+\frac{1}{q}=\frac{1}{r}$, let $x\in L^p(\M,\varphi)$ and let $y\in L^q(\M,\varphi)$, then $xy\in L^r(\M,\varphi)$ and \[\|xy\|_r\leq \|x\|_p\|y\|_q.\] In particular, $L^p(\M,\varphi)$ is an $\M$-bimodule for all $1\leq p\leq \infty$. Moreover, if $\frac{1}{p}+\frac{1}{q}=1$, \[\left|\Tr_\varphi(xy)\right|\leq\|xy\|_1\leq \|x\|_p\|y\|_q,\quad \text{ for }x\in L^p(\M,\varphi),~y\in L^q(\M,\varphi),\]
and the linear map $\Tr_\varphi$ has, in addition, a tracial property: \[\Tr_\varphi(xy)=\Tr_\varphi(yx),\quad \text{for every }x\in L^p(\M,\varphi),~y\in L^q(\M,\varphi).\] Moreover, if $1\leq p<\infty$ and $\frac{1}{p}+\frac{1}{q}=1$, the space $L^q(\M,\varphi)$ is isometrically the dual of $L^p(\M,\varphi)$ under the duality bracket defined by \[(x,y)\in L^p(\M,\varphi)\times L^q(\M,\varphi)\mapsto \Tr_\varphi(xy)\in\Cdb.\]
Note also that in case $p=2$, the space $L^2(\M,\varphi)$ is a Hilbert space with the inner product \( \left(x~|~y\right) := \Tr_\varphi(y^*x).\)

 We denote $D_\varphi$ the Radon-Nikodym derivative of $\widehat{\varphi}$ with respect to $\tau$, that is \(D_\varphi:= h_\varphi \in L^1(\M,\varphi)_+\). It is called the density operator of $\varphi$.  For every $x\in \M$, we have \begin{equation}\label{D_op_densite}
     \varphi(x) = \Tr_\varphi(D_\varphi x) = \Tr_\varphi(xD_\varphi).
 \end{equation}



For the following three results, we refer to \cite{AR} and \cite{HJX}.


\begin{proposition}\label{change_of_weight}
    Let $\M$ be a von Neuman algebra, let $\varphi_1$ and $\varphi_2$ be two normal faithful positive linear forms on $\M$.
    Then there exists a $*$-isomorphism $\kappa : \M\rtimes_{\sigma^{\varphi_1}}\Rdb\to \M\rtimes_{\sigma^{\varphi_2}}\Rdb$ such that 
    \begin{itemize}
        \item the map $\kappa$ preserves $\M$.
        \item The map $\kappa$ extends to a topological $*$-isomorphism between the algebras of measurable operators $\tilde{\kappa} : L^0(\M\rtimes_{\sigma^{\varphi_1}}\Rdb,\tau_1)\to L^0(\M\rtimes_{\sigma^{\varphi_2}}\Rdb,\tau_2)$.
        \item The map $\tilde{\kappa}$ restricts into a isometric $*$-isomorphism $\kappa_1 : L^1(\M,\varphi_1)\to L^1(\M,\varphi_2)$ that preserves the $\M$-bimodular structure and preserves the trace, that is \[\Tr_{\varphi_1} = \Tr_{\varphi_2}\circ \kappa_1.\]
        \item The map $\tilde{\kappa}$ restricts into a completely order isometric isomorphism $\kappa_p : L^p(\M,\varphi_1)\to L^p(\M,\varphi_2)$.
    \end{itemize}
\end{proposition}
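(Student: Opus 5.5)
The natural route is Connes' cocycle derivative theorem, which yields a $*$-isomorphism $\kappa$ of the crossed products that is the identity on $\M$ and intertwines the dual actions. Let $u_t := (D\varphi_2 : D\varphi_1)_t \in \M$ be the Connes cocycle, so that $\sigma^{\varphi_2}_t = \operatorname{Ad}(u_t)\circ\sigma^{\varphi_1}_t$ and $u_{s+t} = u_s\sigma^{\varphi_1}_s(u_t)$. Let $\lambda^{(j)}(t)$ denote the unitaries implementing the translations in $\M\rtimes_{\sigma^{\varphi_j}}\Rdb$. I would define $\kappa$ on generators by
\[
\kappa(x) = x \quad (x\in\M), \qquad \kappa\bigl(\lambda^{(1)}(t)\bigr) = u_t^*\lambda^{(2)}(t).
\]
The cocycle identity shows that $t\mapsto u_t^*\lambda^{(2)}(t)$ is a unitary representation which implements $\sigma^{\varphi_1}$ on $\M$. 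By the standard uniqueness of crossed products (Takesaki, or via Landstad's theorem, since the dual actions match), $\kappa$ extends to a $*$-isomorphism. By construction it preserves $\M$ and satisfies $\kappa\circ\widehat{\sigma^{\varphi_1}_s} = \widehat{\sigma^{\varphi_2}_s}\circ\kappa$, because the dual actions fix $\M$ and multiply $\lambda(t)$ by $e^{-ist}$, and $u_t\in\M$. This gives the first bullet.

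Next comes the traces. The map $\tau_2\circ\kappa$ is a normal semifinite faithful trace on $\M\rtimes_{\sigma^{\varphi_1}}\Rdb$. By the intertwining, it scales as $e^{-s}$ under $\widehat{\sigma^{\varphi_1}_s}$, so the uniqueness statement (the trace is unique with this scaling property) forces $\tau_2\circ\kappa = \tau_1$. A trace-preserving $*$-isomorphism extends to a topological $*$-isomorphism $\tilde\kappa$ of the algebras of $\tau$-measurable operators, since it maps $\tau$-small projections to $\tau$-small projections and so respects the measure topology. This is the second bullet.

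Since $\tilde\kappa$ intertwines the dual actions, it maps $\{y : \widehat{\sigma^{\varphi_1}_t}(y) = e^{-t/p}y\}$ onto the corresponding space for $\varphi_2$. It therefore restricts to bijections $\kappa_p : L^p(\M,\varphi_1)\to L^p(\M,\varphi_2)$, which are multiplicative, $*$-preserving and positive in both directions. Bimodularity follows from $\kappa|_{\M} = \Id$ and multiplicativity.

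The main obstacle is the trace compatibility $\Tr_{\varphi_1} = \Tr_{\varphi_2}\circ\kappa_1$, because $\Tr_\varphi$ is defined through the correspondence $\psi\mapsto h_\psi$ and dual weights rather than directly from $\tau$. I would prove $\kappa_1(h^{(1)}_\psi) = h^{(2)}_\psi$ for $\psi\in\M_*^+$. Under $\kappa$, the dual weight $\widehat\psi^{(1)}$ corresponds to $\widehat\psi^{(2)}$, since both are $\psi\circ T_j$ with $T_j$ the operator-valued weight $\int\widehat{\sigma^{\varphi_j}_s}\,ds$, and $\kappa$ intertwines these. Uniqueness of Radon--Nikodym derivatives with respect to $\tau_2 = \tau_1\circ\kappa^{-1}$ then gives the identity, and hence $\Tr_{\varphi_2}(\kappa_1 h_\psi) = \psi(1) = \Tr_{\varphi_1}(h_\psi)$. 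The norm $\|h_\psi\|_1 = \|\psi\|$ is then preserved. For $L^p$, $\kappa_p$ is isometric because $\kappa_p(|x|^p) = |\kappa_p x|^p$. Finally, complete order isometry follows by applying the same construction to $M_n(\M)$ with $\varphi_j\otimes\tr_n$: the resulting $\kappa$ is $\Id_{M_n}\otimes\kappa$, because the cocycle is $1\otimes u_t$, and this gives the identification $M_n(L^p)\cong L^p(M_n(\M))$.
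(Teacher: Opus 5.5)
Your route is the standard one behind the references the paper cites for this proposition; the paper gives no proof of its own. You use exterior equivalence of $\sigma^{\varphi_1}$ and $\sigma^{\varphi_2}$ via the Connes cocycle to get $\kappa$ with $\kappa|_{\M}=\Id$ and $\kappa\circ\widehat{\sigma^{\varphi_1}_s}=\widehat{\sigma^{\varphi_2}_s}\circ\kappa$. The construction of $\kappa$, the extension to $L^0$, the restriction to $L^p$, bimodularity and the $M_n$ amplification are all fine.

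The step that fails as written is deducing $\tau_2\circ\kappa=\tau_1$ from the scaling property. An n.s.f.\ trace on $\M\rtimes_{\sigma^{\varphi}}\Rdb$ is not determined by $\tau\circ\widehat{\sigma^{\varphi}_s}=e^{-s}\tau$. For every $c>0$, $c\tau$ has the same property. So does $\tau(z\,\cdot\,)$ for any positive invertible $z$ in the center of $\M$: such $z$ is fixed by $\sigma^{\varphi}$, hence central in the crossed product and fixed by the dual action. The paper's preliminaries phrase the uniqueness this way, but taken literally it is false.

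This is not cosmetic. If you only knew $\tau_2\circ\kappa=c\tau_1$, your Radon--Nikodym argument would give $\kappa(h^{(1)}_\psi)=c\,h^{(2)}_\psi$. Then $\Tr_{\varphi_2}\circ\kappa_1=c\Tr_{\varphi_1}$, and $\kappa_p$ would multiply norms by $c^{1/p}$.

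The repair is to use the actual normalization of the canonical trace, $(D\widehat{\varphi_j}:D\tau_j)_t=\lambda^{(j)}(t)$ (equivalently $D_{\varphi_j}^{it}=\lambda^{(j)}(t)$), and to do your dual-weight step first. Write $\widehat{\psi}^{(j)}$ for the dual weight of $\psi$ on $\M\rtimes_{\sigma^{\varphi_j}}\Rdb$. Two facts are available:

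1. $\widehat{\psi}^{(2)}\circ\kappa=\widehat{\psi}^{(1)}$. This is your operator-valued weight argument, which uses only the intertwining and not the traces.
2. $(D\widehat{\varphi_2}^{(1)}:D\widehat{\varphi_1}^{(1)})_t=(D\varphi_2:D\varphi_1)_t=u_t$.

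Since $\kappa^{-1}(\lambda^{(2)}(t))=u_t\lambda^{(1)}(t)$, the chain rule and naturality of Connes cocycles give, for all $t$,
\[
(D\widehat{\varphi_2}^{(1)}:D\tau_1)_t=u_t\lambda^{(1)}(t)=\kappa^{-1}\bigl((D\widehat{\varphi_2}^{(2)}:D\tau_2)_t\bigr)=\bigl(D\widehat{\varphi_2}^{(1)}:D(\tau_2\circ\kappa)\bigr)_t .
\]
Hence $\tau_1=\tau_2\circ\kappa$. With that in place, the rest of your argument goes through unchanged, including $\kappa_1(h^{(1)}_\psi)=h^{(2)}_\psi$, the trace identity and the isometry of $\kappa_p$.
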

We recall the notion of completely isometric maps and completely positive maps between noncommutative $L^p$-spaces in the last paragraph of this section.

\begin{theorem}\label{HJX}
Let $\M$ be a von Neumann algebra, equipped with a normal faithful linear form $\varphi$. Let $T : \M\to \M$ be a unital positive map such that $$\varphi(T(x)) \leq \varphi(x),\quad x\in\M_+.$$ Denote by $D_\varphi \in L_1(\M,\varphi)_+$ the density operator associated to $\varphi$. 
Then for $1\leq p<\infty$, the map $$T_p : \begin{array}{ccc}
    D_\varphi^{1/2p}\M D_\varphi^{1/2p} & \to & D_\varphi^{1/2p}\M D_\varphi^{1/2p} \\
    D_\varphi^{1/2p} x D_\varphi^{1/2p} & \mapsto & D_\varphi^{1/2p}T(x) D_\varphi^{1/2p}\end{array}$$ uniquely extends to a positive and contractive map from $L^p(\M,\varphi)$ into $L^p(\M,\varphi)$.
\end{theorem}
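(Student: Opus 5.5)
The plan is to treat the statement as the Haagerup--Junge--Xu extension result and prove it by interpolation between $p=1$ and $p=\infty$. First, positivity and contractivity at $p=\infty$: since $T$ is positive and unital, $\|T(x)\|_\infty\leq \|x\|_\infty$ for self-adjoint $x$. For general $x$ we get $\|T\|\leq 2$ at the naive level, so we do not use the $\infty$ endpoint directly. Instead we work with the symmetric embeddings $x\mapsto D_\varphi^{1/2p}xD_\varphi^{1/2p}$ and treat the positive part separately.

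\emph{Step 1 ($p=1$).} The set $D_\varphi^{1/2}\M D_\varphi^{1/2}$ is dense in $L^1(\M,\varphi)$. For $x\in\M_+$, $D_\varphi^{1/2}xD_\varphi^{1/2}$ corresponds to the functional $\varphi(\cdot)$ weighted by $x$, and
\[\|D_\varphi^{1/2}T(x)D_\varphi^{1/2}\|_1=\Tr_\varphi(D_\varphi T(x))=\varphi(T(x))\leq\varphi(x)=\|D_\varphi^{1/2}xD_\varphi^{1/2}\|_1.\]
For general $x$, duality does the work. $T_1$ is the preadjoint-type map, and $\Tr_\varphi(D_\varphi^{1/2}T(x)D_\varphi^{1/2}y)$ for $y\in\M$ equals $\Tr_\varphi(D_\varphi^{1/2}xD_\varphi^{1/2}S(y))$, where $S$ is the $\varphi$-adjoint of $T$ defined via the KMS/Petz construction. $S$ is positive, and the inequality $\varphi\circ T\leq\varphi$ gives $S(1)\leq1$, so $\|S\|_\infty\leq\|S(1)\|\leq1$ by Russo--Dye for positive maps. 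This gives contractivity at $p=1$. The delicate point is the existence of $S$: it requires $\varphi\circ T\leq\varphi$ together with a Radon--Nikodym argument applied to the functional $y\mapsto\Tr_\varphi(D_\varphi^{1/2}T(x)D_\varphi^{1/2}y)$.

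\emph{Step 2 (interpolation).} Use Kosaki's complex interpolation, which gives $L^p(\M,\varphi)=[\M,L^1(\M,\varphi)]_{1/p}$ with embedding $x\mapsto D_\varphi^{1/2}xD_\varphi^{1/2}$ at $p=1$. At the $\M$ endpoint the map is $T$ itself, which is contractive on $\M$ by Russo--Dye since $T$ is positive and unital. At the other endpoint the map is $T_1$, contractive by Step 1. The symmetric Kosaki interpolation then yields that $T_p$ is a contraction on the dense subspace $D_\varphi^{1/2p}\M D_\varphi^{1/2p}$, and it extends uniquely by density. Density here comes from the standard fact that $D_\varphi^{1/2p}\M D_\varphi^{1/2p}$ is dense in $L^p$ for $p<\infty$.

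\emph{Step 3 (positivity).} Positivity follows because $D_\varphi^{1/2p}xD_\varphi^{1/2p}\geq0$ if and only if $x\geq0$. Hence positive elements of the dense set are mapped to positive elements, and the positive cone of $L^p$ is closed, while positive elements of $L^p$ are limits of such elements.

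The main obstacle is Step 1 for non-positive $x$, namely constructing the dual map $S$ when $T$ is only positive and not $\sigma^\varphi$-commuting. Since the paper explicitly refers to \cite{HJX} for this result, I would cite that reference for the $L^1$ estimate rather than redo it.
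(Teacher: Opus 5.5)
The paper gives no proof of this statement; it quotes it from Haagerup--Junge--Xu (and Arhancet--Raynaud). Your outline is the standard argument behind that citation, and the overall route is sound. Its ingredients are: contractivity on $L^1$, contractivity of $T$ on $\M$, Kosaki's symmetric interpolation $L^p(\M,\varphi)=[\M,L^1(\M,\varphi)]_{1/p}$ for the embedding $x\mapsto D_\varphi^{1/2}xD_\varphi^{1/2}$, and density of $D_\varphi^{1/2p}\M_+D_\varphi^{1/2p}$ in $L^p(\M,\varphi)_+$. You should drop your opening hedge about a factor $2$. For a positive map between unital $C^*$-algebras one has $\|T\|=\|T(1)\|$ (Russo--Dye), so $T$ is contractive on all of $\M$, exactly as you use in Step 2.

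The step you flag as the main obstacle and leave to the citation is misdescribed, but it is short once set up correctly. The Radon--Nikodym argument must be run in the variable $x$, for fixed $y\in\M_+$. Put $\omega_y(x):=\Tr_\varphi(D_\varphi^{1/2}T(x)D_\varphi^{1/2}y)$. For $x\geq 0$, set $k=D_\varphi^{1/2}T(x)D_\varphi^{1/2}\geq 0$; then $0\leq \omega_y(x)=\Tr_\varphi(k^{1/2}yk^{1/2})\leq \|y\|\varphi(T(x))\leq \|y\|\varphi(x)$. Hence $0\leq\omega_y\leq\|y\|\varphi$. In particular $\omega_y$ is normal even though $T$ is not assumed normal, and $h_{\omega_y}\leq \|y\|D_\varphi$. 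The standard fact that $0\leq h\leq D_\varphi$ in $L^1$ forces $h=D_\varphi^{1/2}aD_\varphi^{1/2}$ with $0\leq a\leq 1$ then gives $h_{\omega_y}=D_\varphi^{1/2}S(y)D_\varphi^{1/2}$ for a unique $S(y)\in\M$ with $0\leq S(y)\leq\|y\|$.

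Uniqueness makes $S$ additive and positively homogeneous on $\M_+$, so it extends to a positive linear map on $\M$. Moreover $S(1)\leq 1$; this is where $\varphi\circ T\leq\varphi$ enters. Hence $\|S\|\leq 1$, and $\Tr_\varphi(D_\varphi^{1/2}T(x)D_\varphi^{1/2}y)=\Tr_\varphi(D_\varphi^{1/2}xD_\varphi^{1/2}S(y))$ for all $x,y\in\M$. Taking the supremum over $\|y\|_\infty\leq 1$ and using H\"older gives $\|D_\varphi^{1/2}T(x)D_\varphi^{1/2}\|_1\leq\|D_\varphi^{1/2}xD_\varphi^{1/2}\|_1$ for every $x\in\M$, not only positive ones. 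By contrast, the functional you name, $y\mapsto\Tr_\varphi(D_\varphi^{1/2}T(x)D_\varphi^{1/2}y)$ with $x$ fixed, is just the element $D_\varphi^{1/2}T(x)D_\varphi^{1/2}$ of $L^1(\M,\varphi)$. A Radon--Nikodym argument applied to it produces nothing, and no KMS or Petz construction is needed.
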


For $\M$ a von Neumann algebra equipped with a normal faithful positive linear form $\varphi$, we denote by $\M^\varphi := \left\{x\in\M,\quad \sigma_t^\varphi(x)=x,~\text{ for every }t\in\Rdb\right\}$ the centralizer of $\varphi$. Recall that if $x\in \M$, then \[x\in\M^\varphi \iff \varphi(xy)=\varphi(yx),\quad\text{ for any }y\in\M.\]

For $e\in\M^\varphi$ a projection, by applying Theorem \ref{HJX} to the canonical inclusion $e\M e\mapsto\M$, we obtain the following proposition.
\begin{proposition}\label{eL^pe}
    Let $\M$ be a von Neumann algebra equipped with a normal faithful positive linear form $\varphi$.
    If $e$ is an orthogonal projection such that $e\in\M^\varphi$, we have an identification between the Banach spaces $L^p(e\M e, \varphi_e)$ and $eL^p(\M,\varphi)e$, where $\varphi_e$ is the restriction of $\varphi$ on $e\M e$.
   
    Moreover, the Haagerup trace $\Tr_\varphi$ restricts to $\Tr_{\varphi_e}$ on $L^1(e\M e)$.
\end{proposition}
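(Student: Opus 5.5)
We apply Theorem \ref{HJX} to the map $T:\M\to\M$, $T(x)=exe$, or more precisely to the conditional-expectation-like map and the inclusion. The plan is to build a concrete isometric embedding of $L^p(e\M e,\varphi_e)$ into $L^p(\M,\varphi)$ whose image is $eL^p(\M,\varphi)e$, using the densities. The key observation is that since $e\in\M^\varphi$, the density $D_\varphi$ commutes with $e$, because $\sigma^\varphi_t(e)=D_\varphi^{it}eD_\varphi^{-it}=e$. Consequently $eD_\varphi e=eD_\varphi=D_\varphi e$ is a positive element of $eL^1(\M,\varphi)e$. We would then check that it plays the role of the density $D_{\varphi_e}$ of $\varphi_e$ on $e\M e$.

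Concretely, we first identify the crossed product. The modular group of $\varphi_e$ is the restriction of $\sigma^\varphi$ to $e\M e$ (standard for projections in the centralizer). Hence $e\M e\rtimes_{\sigma^{\varphi_e}}\Rdb$ identifies with $e\R e$, with dual action the restriction of $\widehat{\sigma^\varphi}$, since $W(t)$ commutes with $e\in\M$. The trace $\tau$ restricted to $e\R e$ satisfies the scaling property $\tau\circ\widehat{\sigma^\varphi_t}=e^{-t}\tau$, so by uniqueness it is $\tau_{\varphi_e}$. Measurable operators affiliated with $e\R e$ are exactly $eL^0(\R,\tau)e$. Applying the defining relation $\widehat{\sigma^\varphi_t}(y)=e^{-t/p}y$ inside $e L^0 e$ then gives $L^p(e\M e,\varphi_e)=eL^p(\M,\varphi)e$ as sets.

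Next we handle norms and traces. For $\psi\in(e\M e)_*^+$, the dual weight of $\psi$ on $e\R e$ agrees with the dual weight of $\psi(e\cdot e)$ restricted. Hence $h_\psi$ computed in $e\R e$ equals $h_{\psi(e\cdot e)}\in eL^1(\M,\varphi)e$. This gives $\Tr_{\varphi_e}(h_\psi)=\psi(e)=\Tr_\varphi(h_{\psi(e\cdot e)})$, which is the trace statement. The $L^1$ norms agree because $\|\psi(e\cdot e)\|_{\M_*}=\|\psi\|_{(e\M e)_*}$. The $L^p$ norms agree because $|x|^p$ is computed identically in both settings.

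Alternatively, for the Banach identification alone, Theorem \ref{HJX} gives a cleaner route. The map $x\mapsto exe$ from $\M$ to $e\M e$ is positive, unital, and $\varphi$-decreasing, since $\varphi(exe)=\varphi(xe)\le$ after extending suitably. Together with the inclusion (made unital by adding $\varphi(\cdot)$-weighted complements), Theorem \ref{HJX} yields contractions in both directions on the dense sets $D^{1/2p}\M D^{1/2p}$ that are mutually inverse. Compatibility with these dense sets follows from $eD_\varphi^{1/2p}=D_\varphi^{1/2p}e$.

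The main obstacle is the identification of the crossed product and dual trace for $e\M e$ with the corner $e\R e$, particularly the uniqueness of $\tau$ and of the dual weights under reduction. We would first try to prove this by the uniqueness characterization cited from \cite{Hiai}.
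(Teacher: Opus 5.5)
Your crossed-product route is correct in outline and differs from the paper's. The paper gets the proposition in one line by applying the extension theorem, Theorem \ref{HJX}, to the canonical inclusion $e\M e\hookrightarrow\M$. Implicitly this is paired with the compression $x\mapsto exe$, which is $\varphi$-decreasing because the cross terms vanish (e.g.\ $\varphi(ex(1-e))=\varphi(x(1-e)e)=0$), so $\varphi(x)=\varphi(exe)+\varphi((1-e)x(1-e))$. One then gets that $D_{\varphi_e}^{1/2p}xD_{\varphi_e}^{1/2p}\mapsto D_\varphi^{1/2p}xD_\varphi^{1/2p}$ extends to an isometry of $L^p(e\M e,\varphi_e)$ onto $eL^p(\M,\varphi)e$. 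The trace statement reduces to $\varphi_e(x)=\varphi(x)$ on $D_{\varphi_e}^{1/2}(e\M e)D_{\varphi_e}^{1/2}$. Your ``alternative'' route is essentially this argument. Identifying $e\M e\rtimes_{\sigma^{\varphi_e}}\Rdb$ with the corner $e\R e$ is heavier, but it gives a literal equality of subsets of $L^0(\R,\tau)$ for all $p$ at once. That equality is automatically compatible with products, adjoints, supports, the $e\M e$-bimodule structure and the trace, which is how the proposition is used later (e.g.\ the manipulations with $\sigma(H)^px$ in the proof of Proposition \ref{phi_contr_Lp}). The HJX route gives, for each $p$ separately, an isometry on a dense subspace, and those compatibilities must then be checked by hand.

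Two points need repair.

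\textbf{The trace normalization.} The step ``$\tau|_{e\R e}$ satisfies $\tau\circ\widehat{\sigma^\varphi_t}=e^{-t}\tau$, so by uniqueness it is $\tau_{\varphi_e}$'' is not valid, even though the preliminaries phrase uniqueness this way. The scaling relation is preserved by $\tau\mapsto c\tau$ for every $c>0$ (and by $\tau\mapsto\tau(z\,\cdot)$ for positive invertible central $z\in\M$). Such rescalings do change the norms and $\Tr_{\varphi_e}$, since $h_\psi$ is a Radon--Nikodym derivative with respect to $\tau$. The canonical trace is fixed by the normalization $D_\varphi^{it}=\lambda(t)$, where $\lambda(t)\in\R$ are the unitaries implementing $\sigma^\varphi$; equivalently $(D\widehat{\varphi}:D\tau_\varphi)_t=\lambda(t)$. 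This normalization does pass to the corner, and you already have the ingredients. Your dual-weight observation with $\psi=\varphi_e$ gives $\widehat{\varphi_e}=\widehat{\varphi}|_{e\R e}$, because the operator-valued weight $\int\widehat{\sigma^\varphi_s}\,ds$ commutes with compression by $e$. Since $e$ commutes with $D_\varphi$, the density of $\widehat{\varphi_e}$ with respect to $\tau|_{e\R e}$ is $eD_\varphi$. Its imaginary powers $e\lambda(t)$ are exactly the unitaries implementing $\sigma^{\varphi_e}$ in $e\R e$.

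\textbf{The alternative route.} No unitalization of the inclusion can work. Any unital positive $S:e\M e\to\M$ satisfies $\varphi(S(e))=\varphi(1)>\varphi(e)=\varphi_e(e)$ when $e\neq 1$, so $\varphi\circ S\le\varphi_e$ fails. What you need is the version of the extension theorem for positive maps between two algebras with $T(1)\le 1$ and $\psi\circ T\le\varphi$, which is tacitly what the paper uses for the non-unital inclusion. Also, the two contractions are not mutually inverse. Compression after inclusion is the identity of $L^p(e\M e,\varphi_e)$, while inclusion after compression is $y\mapsto eye$ on $L^p(\M,\varphi)$. It is exactly this asymmetry that makes the inclusion isometric with image $eL^p(\M,\varphi)e$.
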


In the remainder of the paper, we work with contractive and $n$-positive operators between noncommutative $L^p$-spaces, for $1\leq n\leq \infty$. To understand this notion, we rely on the identification \[S^p_n\left(L^p(\M,\varphi)\right) = L^p\left(M_n(\M),\tr_n\otimes\varphi\right),\] where $\tr_n$ is the usual trace on $M_n(\Cdb)$. We refer to \cite{JungeRuanXu} for details. Note that $S^p_n := S^p(\ell^2_n)$.

Consider a map $T:L^p(\M,\varphi)\to L^p(\N,\psi)$, with $\M,\N$ some von Neumann algebras and $\varphi,\psi$ some normal faithful positive linear functionals on $\M$ and $\N$ respectively. For $1\leq n < \infty$, we say that $T$ is $n$-positive when the map $\Id_{S^p_n}\otimes T : S^p_n(L^p(\M,\varphi))\to S^p_n(L^p(\N,\psi))$ is positive. If $\Id_{S^p_n}\otimes T$ is positive for all $n$, we say that $T$ is completely positive. We say that $T$ is $n$-bounded when the map $\Id_{S^p_n}\otimes T : S^p_n(L^p(\M,\varphi))\to S^p_n(L^p(\N,\psi))$ is bounded. If for every $n$, the maps $\Id_{S^p_n}\otimes T $ are uniformly bounded in $n$, we say that $T$ is completely bounded. If for every $n$, $\Id_{S^p_n}\otimes T $ is an isometry, we say that $T$ is completely isometric. The map $T$ is a completely order isometric isomorphism if it is bijective and $T$ and $T^{-1}$ are completely positive and completely isometric.

\section{Contractive projections in noncommutative $L_p$ spaces}
\subsection{Preliminaries on contractive projections}

The following theorem, from \cite[Theorem 5.1]{AR} is inspired by \cite[Theorem 3.1]{JungeRuanXu}.
\begin{theorem}\label{lifting thm}
Let $\M$ be a von Neumann algebra, equipped a normal faithful state $\varphi$. Let $1\leq p<\infty$, let $T : L^p(\M,\varphi)\to L^p(\M,\varphi)$ be a positive map. Let $h$ be a positive element in $L^p(\M,\varphi)$. Then, there exists a unique map $v:\M\to \operatorname{s}(T(h))\M \operatorname{s}(T(h))$ such that \[T(h^{1/2}xh^{1/2}) = T(h)^{1/2}v(x)T(h)^{1/2},\quad x\in\M,\] where $\operatorname{s}(T(h))$ is the support projection of $T(h)$. Moreover, $v$ is a unital contractive and normal map. If $T$ is $n$-positive, for $1\leq n \leq \infty$, then $v$ is also $n$-positive.
\end{theorem}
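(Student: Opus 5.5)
We follow the strategy behind \cite[Theorem 3.1]{JungeRuanXu}: first put the map $x\mapsto h^{1/2}xh^{1/2}$ into good position, and then read off $v$ from the density of $\M$-elements in the right places. Write $k:=T(h)\in L^p(\M,\varphi)_+$ and $f:=\operatorname{s}(k)$.

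\textbf{Step 1 (domination).} Fix $x\in\M_+$. Then $0\le h^{1/2}xh^{1/2}\le \|x\|_\infty h$, and since $T$ is positive,
\[0\le T(h^{1/2}xh^{1/2})\le \|x\|_\infty\, k.\]
We now use a standard factorization lemma in $L^0(\R,\tau)$: if $0\le y\le k$ with $y,k\in L^p(\M,\varphi)$, then there is a unique $c\in f\M f$ with $0\le c\le 1$ and $y=k^{1/2}ck^{1/2}$. The construction is as follows. Set $c_n:=(k+\tfrac1n)^{-1/2}y(k+\tfrac1n)^{-1/2}$; these are contractions in $\R$ that converge strongly. The limit is invariant under the dual action $\widehat{\sigma^\varphi}$, because $y$ and $k$ have the same homogeneity $e^{-t/p}$. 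Hence the limit lies in $\M$ by (\ref{M=Linfini}).

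Applying the lemma to $y=T(h^{1/2}xh^{1/2})$ defines $v(x)\in f\M f$ with
\[T(h^{1/2}xh^{1/2})=k^{1/2}v(x)k^{1/2},\qquad 0\le v(x)\le\|x\|_\infty f.\]
Uniqueness of $v$ (on all of $\M$) comes from injectivity of $c\mapsto k^{1/2}ck^{1/2}$ on $f\M f$, since $f$ is the support of $k$. We then extend $v$ to all of $\M$ by linearity, decomposing each $x$ into four positive parts; linearity of the extension follows from this uniqueness. For $x=1$ we get $k^{1/2}v(1)k^{1/2}=k$, so $v(1)=f$, which is the unit of $f\M f$. Thus $v$ is unital and positive, and a unital positive map is contractive by the Russo--Dye theorem.

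\textbf{Step 2 (normality).} Let $x_i\nearrow x$ be a bounded increasing net in $\M_+$. Then $h^{1/2}x_ih^{1/2}\to h^{1/2}xh^{1/2}$ in $L^p$ norm for $p<\infty$; this can be checked by pairing against $L^{p'}$ and using $\Tr_\varphi$. By continuity of $T$, $k^{1/2}v(x_i)k^{1/2}\to k^{1/2}v(x)k^{1/2}$. The net $v(x_i)$ increases and is bounded, so it has a supremum $z\le v(x)$. Pairing with $L^{p'}$ shows $k^{1/2}zk^{1/2}=k^{1/2}v(x)k^{1/2}$, and injectivity then gives $z=v(x)$. Hence $v$ is normal.

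\textbf{Step 3 ($n$-positivity).} Apply the same construction to $\Id_{S^p_n}\otimes T$ on $L^p(M_n(\M),\tr_n\otimes\varphi)$, with $h\otimes 1_n$ in place of $h$, after transporting the weight with Proposition \ref{change_of_weight} if needed. By the uniqueness part, the map obtained is exactly $\Id_{M_n}\otimes v$. It is positive by Step 1, so $v$ is $n$-positive.

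The main obstacle is the factorization lemma in Step 1, in the Haagerup setting. The delicate point is showing that the strong limit of the $c_n$ exists and lands in $f\M f$ rather than merely in $\R$; the invariance argument under $\widehat{\sigma^\varphi}$ is what resolves this. A secondary difficulty is the norm convergence claimed in Step 2.
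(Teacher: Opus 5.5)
Your proposal is correct and follows the standard route. The paper gives no proof of its own: it quotes this result from \cite[Theorem 5.1]{AR}, which adapts the domination-plus-factorization argument of \cite[Theorem 3.1]{JungeRuanXu} that you use, and your uniqueness, normality and $n$-positivity steps (the latter via $\Id_{S^p_n}\otimes T$ and $I_n\otimes h$) match that argument. Two minor remarks: the factorization lemma is cleanest via a Douglas-type factorization $y^{1/2}=uk^{1/2}$ with $u\in\R$ a contraction and $u=uf$, so that $c=u^*u$ and $\widehat{\sigma^\varphi_t}(u)=u$ follows from the uniqueness of such $u$ (your $c_n$ are only invariant up to rescaling $\tfrac1n$, and their strong convergence still needs an argument); and in Step 2, weak convergence in $L^p$ already suffices, since it is exactly what pairing with $L^{p'}$ gives and the bounded map $T$ preserves it, so norm convergence is not needed.
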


Combined with the following proposition, we may obtain a more precised lifting theorem in the case of positive and contractive projections. Let $P : L^p(\M,\varphi)\to L^p(\M,\varphi)$ be a bounded projection, 
and $1\leq p<\infty$. Define its support projections \[\operatorname{s}_\ell(P):= \sup\{\operatorname{s}_\ell(x),\quad x\in \Ran(P)\},\quad \operatorname{s}_r(P):= \sup\{\operatorname{s}_r(x),\quad x\in \Ran(P)\}.\] The subspace $\Ran(P)$ is called non-degenerate if $\operatorname{s}_\ell(P)$ and $\operatorname{s}_r(P)$ are the identity.
If $P$ is adjoint preserving, i.e. $P(x^*)=P(x)^*$ for every $x$, then $$\operatorname{s}_\ell(P) = \operatorname{s}_r(P),$$ and we denote this projection by $\operatorname{s}(P)$. Note that a positive map $T:L^p(\M,\varphi)\to L^p(\M,\varphi)$ is adjoint preserving.
The following proposition, from \cite[Proposition 6.1]{AR}, highlights a particular element in $\Ran(P)$ with a maximal support.
\begin{proposition}\label{existence_h}
Let $P : L^p(\M,\varphi)\to L^p(\M,\varphi)$ be a contractive positive projection, with $\M$ a $\sigma$-finite von Neumann algebra equipped with a normal faithful state $\varphi$, and $1\leq p<\infty$. Then there exits a positive element $h$ of $\Ran(P)$ such that $\operatorname{s}(h) = \operatorname{s}(P)$.

\end{proposition}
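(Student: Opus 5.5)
The plan is to show that the family of support projections of positive elements of $\Ran(P)$ is upward directed and has supremum $\operatorname{s}(P)$, and then to use faithfulness of $\varphi$ to realise this supremum by a single element built as a convergent series.

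\emph{Step 1: reduction to positive elements.} Since $P$ is positive it is adjoint preserving, so $x\in\Ran(P)$ implies $x^*\in\Ran(P)$. Hence the real and imaginary parts $a=\frac{x+x^*}{2}$ and $b=\frac{x-x^*}{2i}$ of any $x\in\Ran(P)$ lie in $\Ran(P)$. Because $x=a+ib$, we get $\operatorname{s}_\ell(x)\le \operatorname{s}(a)\vee\operatorname{s}(b)$ and $\operatorname{s}_r(x)\le \operatorname{s}(a)\vee\operatorname{s}(b)$.

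Now let $x=x^*\in\Ran(P)$ with Jordan decomposition $x=x_+-x_-$. Then $x=P(x)=P(x_+)-P(x_-)$. Setting $k:=P(|x|)=P(x_+)+P(x_-)$, the element $k$ is positive, lies in $\Ran(P)$, and satisfies $-k\le x\le k$. Put $q:=1-\operatorname{s}(k)$. From $q(k\pm x)q=0$ and $k\pm x\ge 0$ we get $(k\pm x)q=0$, hence $xq=0$, that is $\operatorname{s}(x)\le \operatorname{s}(k)$. Consequently
\[\operatorname{s}(P)=\sup\{\operatorname{s}(k),\ k\in\Ran(P)_+\}.\]
This step only uses positivity of $P$, not contractivity.

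\emph{Step 2: directedness and countable reduction.} For $k_1,k_2\in\Ran(P)_+$ we have $k_1+k_2\in\Ran(P)_+$ and $\operatorname{s}(k_1+k_2)=\operatorname{s}(k_1)\vee\operatorname{s}(k_2)$, so the family of supports is upward directed. Let
\[m:=\sup\{\varphi(\operatorname{s}(k)),\ k\in\Ran(P)_+\}\le 1,\]
and choose $k_n\in\Ran(P)_+\setminus\{0\}$ with $\varphi(\operatorname{s}(k_n))\to m$. Define
\[h:=\sum_n 2^{-n}\frac{k_n}{\|k_n\|_p}.\]
This series converges in $L^p(\M,\varphi)$, and $h\in\Ran(P)_+$ because $\Ran(P)$ is closed ($P$ is a bounded projection) and the positive cone is closed.

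Next, $\operatorname{s}(h)\ge\operatorname{s}(k_n)$ for every $n$. Indeed, if $q:=1-\operatorname{s}(h)$, then $qhq=0$ is a convergent sum of positive terms $2^{-n}qk_nq/\|k_n\|_p$, so each $qk_nq=0$, whence $k_nq=0$. Therefore $\varphi(\operatorname{s}(h))=m$.

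\emph{Step 3: maximality.} Let $k\in\Ran(P)_+$. Then $h+k\in\Ran(P)_+$ and $\operatorname{s}(h+k)\ge\operatorname{s}(h)$, while $\varphi(\operatorname{s}(h+k))\le m=\varphi(\operatorname{s}(h))$. By faithfulness of $\varphi$, $\operatorname{s}(h+k)=\operatorname{s}(h)$, so $\operatorname{s}(k)\le\operatorname{s}(h)$. Combined with Step 1, this gives $\operatorname{s}(h)=\operatorname{s}(P)$.

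The main obstacle is Step 1, namely that a self-adjoint element of $\Ran(P)$ is dominated in absolute value by a positive element of $\Ran(P)$. The trick $P(|x|)\ge\pm x$ handles it, together with the elementary support lemma for $-k\le x\le k$. The remaining steps are routine uses of $\sigma$-finiteness through the faithful state $\varphi$.
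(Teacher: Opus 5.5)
Your argument is correct. The paper gives no proof of this statement: it is quoted from \cite[Proposition 6.1]{AR}, so there is no in-paper argument to compare against, and yours is a complete, self-contained replacement.

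Its structure has three parts.
\begin{enumerate}
\item $\operatorname{s}(P)$ is already the supremum of the supports of elements of $\Ran(P)_+$. This follows from the domination $-P(|x|)\le x\le P(|x|)$ for self-adjoint $x\in\Ran(P)$, after splitting a general $x$ into real and imaginary parts.
\item These supports are upward directed, because $\Ran(P)_+$ is a cone.
\item The series $\sum_n 2^{-n}k_n/\|k_n\|_p$ attains $m=\sup\varphi(\operatorname{s}(k))$, and faithfulness of $\varphi$ turns attaining $m$ into maximality of the support.
\end{enumerate}
This uses only positivity of $P$, closedness of $\Ran(P)=\ker(\Id-P)$ and of $L^p(\M,\varphi)_+$, and the faithful state. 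Contractivity of $P$ is never used, so you have in fact proved the statement for every bounded positive projection.

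A few points deserve one line each in a written version:
\begin{itemize}
\item Treat the case $\Ran(P)=\{0\}$ separately (take $h=0$) before you normalise the $k_n$.
\item In Step 1, note that $q(k+x)q=qxq$ and $q(k-x)q=-qxq$ are both positive. This is what forces $qxq=0$, and hence $q(k\pm x)q=0$.
\item Say why $\varphi(\operatorname{s}(k))$ makes sense. The support of an element of $L^p(\M,\varphi)$ is fixed by the dual action, so it lies in $\M$.
\item In Step 2, the passage from $qhq=0$ to $qk_nq=0$ uses $0\le 2^{-n}qk_nq/\|k_n\|_p\le qhq$. This in turn relies on the tail of the series being positive, which holds because the cone is closed.
\end{itemize}
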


If we consider a contractively $n$-pseudo decomposable projection $P$ on $L^p(\M,\varphi)$, for $1\leq n\leq \infty$, then the $n$-positive map $\Phi : S^p_2(L^p(\M,\varphi))\to S^p_2(L^p(\M,\varphi))$ that comes from (\ref{def_dec}) may be chosen to be a projection. This results is proved in \cite[Proposition 7.1, Lemma 7.4]{A_dec}, and reviewed here.

\begin{proposition}\label{Phi_proj} Let $\M$ be a von Neumann algebra, equipped with a normal faithful state $\varphi$. Let $1< p<\infty$, and let $P: L^p(\M,\varphi)\to L^p(\M,\varphi)$ be a contractively $n$-pseudo-decomposable projection, for $1\leq n\leq \infty$. Then, there exist $n$-positive contractive projections $P_1,P_2 : L^p(\M,\varphi)\to L^p(\M,\varphi)$ such that the linear map \[\Phi := \left[\begin{array}{cc}
    P_1 & P \\
    P^\circ & P_2
\end{array}\right] : S^p_2(L^p(\M,\varphi))\to S^p_2(L^p(\M,\varphi))\] is a $n$-positive projection.

Moreover, let $e = \left[\begin{array}{cc}
    \operatorname{s}(P_1) & 0 \\
    0 & \operatorname{s}(P_2)
\end{array}\right]$, then $\Phi(exe)=\Phi(x)$, for every $x\in S^p_2(L^p(\M,\varphi))$.
\end{proposition}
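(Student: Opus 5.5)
The plan is to start from a minimizing pair and then correct it into projections. Since $P$ is contractively $n$-pseudo-decomposable, \cite[Proposition 4.3]{A_dec} gives maps $v_1,v_2$ with $\max\{\|v_1\|,\|v_2\|\}\leq 1$ such that
\[\Psi:=\left[\begin{array}{cc} v_1 & P\\ P^\circ & v_2\end{array}\right]\]
is $n$-positive. Restricting $\Psi$ to the diagonal corners shows that $v_1$ and $v_2$ are $n$-positive contractions. The difficulty is that $\Psi$ need not be idempotent. To fix this I would pass to an ergodic average. Consider the Cesàro means
\[\Psi_N:=\frac1N\sum_{k=1}^N\Psi^k.\]
Each $\Psi_N$ is $n$-positive, since compositions of $n$-positive maps are $n$-positive. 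Moreover $P^2=P$ and $(P^\circ)^2=P^\circ$, so the off-diagonal entries of $\Psi^k$ remain $P$ and $P^\circ$, while the diagonal entries are $v_1^k$ and $v_2^k$.

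The diagonal entries then converge. Because $1<p<\infty$, the space $L^p(\M,\varphi)$ is reflexive, and the mean ergodic theorem for contractions on reflexive spaces shows that $\frac1N\sum_k v_i^k$ converges strongly to a contractive projection $P_i$ onto $\ker(I-v_i)$. Since $n$-positivity is preserved under strong limits (the cones are closed), $\Phi=\left[\begin{array}{cc} P_1 & P\\ P^\circ & P_2\end{array}\right]$ is an $n$-positive projection, and $P_1,P_2$ are $n$-positive contractive projections. This is the first claim.

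For the support statement, I would write $s_i=\operatorname{s}(P_i)$ and first check $P_i(s_ixs_i)=P_i(x)$ for $x\geq0$. Proposition \ref{existence_h} provides $h_i\in\Ran(P_i)_+$ with full support $s_i$. Theorem \ref{lifting thm} then lets me write $P_i$ on $h_i^{1/2}\M h_i^{1/2}$ through a unital map $w_i$ into $s_i\M s_i$. A positivity and contractivity argument then shows that $P_i((1-s_i)x(1-s_i))=0$ for positive $x$: the image would be a positive element of $\Ran(P_i)$, hence supported in $s_i$, and one compares norms using the uniform convexity of $L^p$ \cite[Lemma 7.4]{A_dec}. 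The mixed terms $P_i(s_ix(1-s_i))$ vanish by the $2$-positivity of $\left[\begin{array}{cc}P_i&P_i\\P_i&P_i\end{array}\right]$ and a Cauchy--Schwarz type inequality. For the off-diagonal entry, I need $P(s_1ys_2)=P(y)$. Here I would use the $n$-positivity of $\Phi$ (only $1$-positivity is needed) applied to $\left[\begin{array}{cc}a&b\\b^*&d\end{array}\right]\geq0$. If $a$ is killed by $P_1$, then the positivity of the image forces the $(1,2)$ entry $P(b)$ to have left support under $\operatorname{s}(P_1(a))$ and right support under $\operatorname{s}(P_2(d))$, which yields the compression. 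Decomposing a general $x$ into four positive parts then gives $\Phi(exe)=\Phi(x)$.

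I expect the main obstacle to be the vanishing $P_i((1-s_i)x(1-s_i))=0$ for contractive positive projections on Haagerup $L^p$. My first approach would be to test against $h_i$, using the strict convexity of $t\mapsto\|h_i+t\,P_i(z)\|_p^p$ and the contractivity of $P_i$ on $h_i+tz$.
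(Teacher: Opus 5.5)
Your averaging construction of $P_1,P_2$ is sound, and it is the natural route (the paper itself gives no proof and imports the statement from \cite[Proposition 7.1, Lemma 7.4]{A_dec}).
\begin{itemize}
\item $P^2=P$ and $(P^\circ)^2=P^\circ$ freeze the off-diagonal entries of $\Psi^k$.
\item The minimum attained in \cite[Proposition 4.3]{A_dec} makes $v_1,v_2$ genuine contractions, so the mean ergodic theorem on the reflexive space $L^p(\M,\varphi)$ applies.
\item $n$-positivity survives strong limits.
\end{itemize}
Your vanishing of $P_i$ on $(1-s_i)L^p(\M,\varphi)_+(1-s_i)$ also works, but the mechanism is differentiability of the norm, not uniform or strict convexity. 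For $z=(1-s_i)z(1-s_i)\geq0$, convexity together with the derivative at $t=0$ gives $\|h_i\|_p^p+pt\,\Tr_\varphi(h_i^{p-1}P_i(z))\leq\|h_i+tP_i(z)\|_p^p\leq\|h_i+tz\|_p^p=\|h_i\|_p^p+t^p\|z\|_p^p$. Since $p>1$, this forces $\Tr_\varphi(h_i^{p-1}P_i(z))=0$. Because $P_i(z)\geq0$ lives under $s_i=\operatorname{s}(h_i^{p-1})$, you get $P_i(z)=0$; the lifting theorem plays no role here. The off-diagonal step is fine once you choose $a=|b^*|$ and $d=|b|$, using $\left[\begin{array}{cc}|b^*|&b\\ b^*&|b|\end{array}\right]\geq0$.

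\textbf{The gap: the mixed terms.} You kill $P_i(s_iy(1-s_i))$ using positivity of $\left[\begin{array}{cc}P_i&P_i\\P_i&P_i\end{array}\right]=\Id_{S^p_2}\otimes P_i$, which is $2$-positivity of $P_i$. The statement includes $n=1$, where $P_1,P_2$ are merely positive. That is exactly the case the paper needs in Section~\ref{section dec Sp} (Lemma~\ref{phi_proj_Sp}, Theorem~\ref{main2}). As written, your argument therefore gives $\Phi(exe)=\Phi(x)$ only for $n\geq2$.

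\textbf{The fix needs only positivity.} Take $b=s_iy(1-s_i)$ with polar decomposition $b=u|b|$, and let $\lambda>0$ and $|\mu|=1$. Put $c=\lambda u+\mu\lambda^{-1}(1-s_i)\in\M$. Then $c|b|c^*=\lambda^2|b^*|+\overline{\mu}b+\mu b^*+\lambda^{-2}|b|\geq0$. Since $|b|$ lies in the $(1-s_i)$-corner, $P_i(|b|)=0$, so $\lambda^2P_i(|b^*|)+P_i(\overline{\mu}b+\mu b^*)\geq0$. Letting $\lambda\to0$ and taking $\mu\in\{\pm1,\pm i\}$ gives $P_i(b+b^*)=0$ and $P_i(b-b^*)=0$, hence $P_i(b)=P_i(b^*)=0$. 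This handles both mixed corners with $1$-positivity only. Combined with your corner and off-diagonal arguments, it yields $\Phi(exe)=\Phi(x)$ for all $1\leq n\leq\infty$.
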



\begin{remark}\label{ran(P)nondegenimpliesran(Phi)nondegen}\begin{enumerate}
    \item With the notation of the previous proposition, since $\Phi(exe)=\Phi(x)$, for $x\in S^p_2(L^p(\M,\varphi))$, we deduce that $P(\operatorname{s}(P_1)z\operatorname{s}(P_2)) = P(z)$ for every $z\in L^p(\M,\varphi)$. It follows that $\operatorname{s}_\ell(P)\leq \operatorname{s}(P_1)$ and $\operatorname{s}_r(P)\leq \operatorname{s}(P_2)$. Indeed, let $z\in\Ran(P)$, we have \[\|z\|_p = \|P(z)\|_p = \|P(\operatorname{s}(P_1)z\operatorname{s}(P_2)\|_p \leq \|\operatorname{s}(P_1)z\operatorname{s}(P_2)\|_p \leq \|z\|_p.\] Hence $\|z\|_p = \|\operatorname{s}(P_1)z\operatorname{s}(P_2)\|_p$. Since $y\mapsto \operatorname{s}(P_1)y\operatorname{s}(P_2)$ is a contractive projection on $L^p(\M,\varphi)$, we can use \cite[(iii) of Proposition 1.1]{AF1}, which adapts in the space $L^p(\M,\varphi)$ since it only uses the strict convexity of the space. We deduce that for every $z\in\Ran(P)$, $z=\operatorname{s}(P_1)z\operatorname{s}(P_2)$, hence the result.
    
    \item We deduce that if $\Ran(P)$ is non-degenerate, then $\Ran(P_1)$ and $\Ran(P_2)$ are non-degenerate, hence $\Ran(\Phi)$ is non-degenerate. 
    
    Conversely, if $\Ran(\Phi)$ is non-degenerate, since $\Phi(exe)=\Phi(x)$, for every $x\in S^p_2(L^p(\M,\varphi))$, then $1 = \operatorname{s}(\Phi) \leq e$, hence $e=1$ and $\operatorname{s}(P_1)=\operatorname{s}(P_2)=1$, so $\Ran(P_1)$ and $\Ran(P_2)$ are non-degenerate. Note that this does not imply that $\Ran(P)$ is non-degenerate.
\end{enumerate}
\end{remark}

\subsection{Description of the range of a contractively decomposable projection on noncommutative $L_p$}

As explained in \cite[Remark 3.6]{Arh_Krieg}, a delicate problem is that if $T:L^p(\M,\varphi)\to L^p(\M,\varphi)$ is a contractively ($n$-pseudo) decomposable map, we do not know if we can find a completely positive (or $n$-positive) map $\Phi$ as in (\ref{def_dec}) which is contractive. The following proposition states that if $T$ is a projection, this is indeed the case.

\begin{proposition}\label{phi_contr_Lp}Let $\M$ be a von Neumann algebra, equipped with a normal faithful state $\varphi$. Let $1< p<\infty$, let $P: L^p(\M, \varphi)\to L^p(\M,\varphi)$ be a contractively $n$-pseudo-decomposable projection, let  $P_1,P_2 : L^p(\M,\varphi)\to L^p(\M,\varphi)$ be $n$-positive contractive projections such that the linear map \[\Phi := \left[\begin{array}{cc}
    P_1 & P \\
    P^\circ & P_2
\end{array}\right] : S^p_2(L^p(\M,\varphi))\to S^p_2(L^p(\M,\varphi))\] is a $n$-positive projection.
Then the map $\Phi$ is contractive.
\end{proposition}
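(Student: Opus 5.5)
The plan is to realize $\Phi$, on the corner where it lives, as the $L^p$-extension of a unital positive map that preserves a faithful state, and then invoke Theorem \ref{HJX}. Since $\Phi(exe)=\Phi(x)$ (Proposition \ref{Phi_proj}) and $x\mapsto exe$ is contractive on $S^p_2(L^p(\M,\varphi))$, it suffices to prove $\|\Phi(y)\|_p\le\|y\|_p$ for $y\in eS^p_2(L^p(\M,\varphi))e$. I take $n\geq 1$ arbitrary, so only positivity of $\Phi$ is used.

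\textbf{Step 1: a diagonal element of $\Ran(\Phi)$ with full support in $e$.} By Proposition \ref{existence_h} applied to the contractive positive projections $P_1,P_2$, choose positive $h\in\Ran(P_1)$ and $k\in\Ran(P_2)$ with $\operatorname{s}(h)=\operatorname{s}(P_1)$ and $\operatorname{s}(k)=\operatorname{s}(P_2)$. After rescaling, assume $\|h\|_p^p+\|k\|_p^p=1$, and set
\[
H=\left[\begin{array}{cc} h&0\\0&k\end{array}\right].
\]
Then $H\geq 0$, $\Phi(H)=H$, $\|H\|_p=1$ and $\operatorname{s}(H)=e$. The lifting Theorem \ref{lifting thm}, applied to $\Phi$ on $L^p(M_2(\M),\tr_2\otimes\varphi)$, gives a unital, normal, positive map $v$ from $M_2(\M)$ into $eM_2(\M)e$ with
\[
\Phi\bigl(H^{1/2}xH^{1/2}\bigr)=H^{1/2}v(x)H^{1/2}.
\]
Restricting $v$ to $\mathcal{A}:=eM_2(\M)e$ yields a unital positive map $\mathcal{A}\to\mathcal{A}$.

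\textbf{Step 2: invariance of the state $\omega$.} Define $\omega(x):=\Tr(H^{p/2}xH^{p/2})$ on $\mathcal{A}$. It is a normal state, faithful because $\operatorname{s}(H)=e$, and $H^p$ is its density. The key claim is $\omega\circ v=\omega$.

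First, $P_1^*(h^{p-1})=h^{p-1}$, and similarly $P_2^*(k^{p-1})=k^{p-1}$. Indeed, $h^{p-1}/\|h\|_p^{p-1}$ is the unique norming functional of $h$, since $L^p$ is smooth for $1<p<\infty$. Because $P_1$ is a contractive projection fixing $h$, the functional $h^{p-1}\circ P_1$ also norms $h$ and has norm at most $\|h\|_p^{p-1}$, so the two coincide.

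Since $H^{p-1}$ is diagonal, only the diagonal blocks $P_1,P_2$ of $\Phi$ enter the pairing, and so $\Tr(\Phi(y)H^{p-1})=\Tr(yH^{p-1})$ for all $y$. Therefore
\[
\omega(v(x))=\Tr\bigl(H^{p-1}\,H^{1/2}v(x)H^{1/2}\bigr)=\Tr\bigl(H^{p-1}\,\Phi(H^{1/2}xH^{1/2})\bigr)=\Tr\bigl(H^{p-1}H^{1/2}xH^{1/2}\bigr)=\omega(x).
\]

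\textbf{Step 3: apply Theorem \ref{HJX}.} The state $\omega$ is faithful on $\mathcal{A}$, but it is not the restriction of $\tr_2\otimes\varphi$. So I use Proposition \ref{eL^pe} together with the change of weight Proposition \ref{change_of_weight} to identify
\[
eS^p_2(L^p(\M,\varphi))e\;\cong\;L^p(\mathcal{A},\omega)
\]
completely order isometrically, with $D_\omega^{1/2p}$ corresponding to $H^{1/2}$. Applying Theorem \ref{HJX} to $v$ and $\omega$ then shows that $H^{1/2}xH^{1/2}\mapsto H^{1/2}v(x)H^{1/2}$ extends to a contraction of $eS^p_2(L^p)e$. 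This extension agrees with $\Phi$ on the dense subspace $H^{1/2}\mathcal{A}H^{1/2}$, and $\Phi$ is bounded, so $\Phi$ is contractive on the corner and hence everywhere.

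\textbf{Main obstacle.} I expect the main difficulty to be the identification in Step 3. The projection $e$ need not lie in the centralizer of $\tr_2\otimes\varphi$, so Proposition \ref{eL^pe} does not apply directly. One must check that the change of weight map carries $H^{1/2}xH^{1/2}$ to $D_\omega^{1/2p}xD_\omega^{1/2p}$ compatibly with the Haagerup trace. My first attempt would be to extend $\omega$ to a faithful state on $M_2(\M)$ by adding a faithful piece supported on $1-e$. With respect to this extended state $e$ is in the centralizer, so Propositions \ref{change_of_weight} and \ref{eL^pe} can be combined.
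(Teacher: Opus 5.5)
Your proposal is correct and follows essentially the same route as the paper. You lift $\Phi$ at the diagonal element $H$ via Theorem \ref{lifting thm}, and you show the lifted map on $eM_2(\mathcal{M})e$ preserves the faithful functional with density $H^p$ using $\Phi^*(H^{p-1})=H^{p-1}$. You then apply Theorem \ref{HJX} after a change of weight that puts $e$ in the centralizer, transport back with $H^{1/2}$ corresponding to $D_\psi^{1/2p}$, and conclude via $\Phi(x)=\Phi(exe)$.

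The differences are minor. You prove $\Phi^*(H^{p-1})=H^{p-1}$ directly from the smoothness of $L^p$, where the paper cites earlier work. You also merge the paper's two changes of weight ($\varphi_2\to\phi$ on $M_2(\mathcal{M})$, then $\phi_e\to\psi$ on $\mathcal{N}$) into a single faithful extension of your state, which works equally well.
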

\begin{proof} We rely on the proof of \cite[Theorem 7.3]{A_dec}.
Recall that \begin{equation}\label{poids_sur_S^p_2(Lp)}
    S^p_2\left(L^p(\M,\varphi)\right) = L^p\left(M_2(\M),\tr_2\otimes\varphi\right),
\end{equation} where $\tr_2$ denotes the usual trace on $M_2(\Cdb)$.
Consider a positive element $h\in \Ran(P_1)$, with $\operatorname{s}(h)=\operatorname{s}(P_1)$, (resp. $k\in\Ran(P_2)$, $k\geq 0$, $\operatorname{s}(k)=\operatorname{s}(P_2)$). These elements exist according to Proposition \ref{existence_h}. Define $H := \left[\begin{array}{cc}
    h &0  \\
    0 & k
\end{array}\right]$. Note that $H\in \Ran(\Phi)$, it is positive, and $e:=\operatorname{s}(H) = \left[\begin{array}{cc}
    \operatorname{s}(h) &0  \\
    0 & \operatorname{s}(k)
\end{array}\right]\in M_2(\M)$.
Let $\varphi_2 := \tr_2\otimes\varphi$ and define another normal faithful positive linear form on $M_2(\M)$: \[\phi(x) := \varphi_2\left(exe +(1-e)x(1-e)\right), \quad x\in M_2(\M).\]
Note that $e$ belongs to the centralizer of $\phi$. According to Proposition \ref{eL^pe}, we have the identification \[eL^p(M_2(\M),\phi)e = L^p(eM_2(\M)e,\phi_e),\] where $\phi_e(x)=\phi(exe)=\varphi_2(exe)$. Let $$\N:= eM_2(\M)e = \left[\begin{array}{cc}
    \operatorname{s}(h)\M \operatorname{s}(h) & \operatorname{s}(h)\M \operatorname{s}(k) \\
    \operatorname{s}(k)\M \operatorname{s}(h) & \operatorname{s}(k)\M \operatorname{s}(k)
\end{array}\right].$$ 
We apply the lifting theorem (see Theorem \ref{lifting thm}) to the $n$-positive map $\Phi : L^p(M_2(\M),\varphi_2)\to L^p(M_2(\M),\varphi_2)$ with respect to the positive element $H$. Then, we obtain a normal unital contractive and $n$-positive map $Q:M_2(\M)\to \N$ such that \begin{equation}\label{defQ}
    \Phi(H^{\frac{1}{2}}xH^{\frac{1}{2}}) = H^{\frac{1}{2}}Q(x)H^{\frac{1}{2}},\quad x\in M_2(\M).
\end{equation}
According to \cite[Lemma 7.6 and Lemma 7.7]{A_dec}, $Q_{|\mathcal{N}}:\N\to\N$ is a faithful projection.
We define a normal faithful positive linear form on $\N$, denoted by $\psi : \N\to\Cdb$ as \[\psi(x) = \Tr_{\varphi_2}(H^px),\] where $\Tr_{\varphi_2}$ is the Haagerup trace on $L^1(M_2(\M),\varphi_2)$. Note that $H\in L^p(M_2(\M),\varphi_2)$, hence $H^px\in L^1(M_2(\M),\varphi_2)$ for any $x\in\N\subset M_2(\M)$. Since $\Phi^*(H^{p-1}) = H^{p-1}$ (see \cite[Lemma 2.5 and Lemma 2.6]{A_dec}), we have $$\psi\circ Q = \psi\quad \text{on }\N.$$ See \cite[Lemma 7.8]{A_dec} for the details. Applying Theorem \ref{HJX} to the map $Q_{|\mathcal{N}}:\N\to\N$, we obtain that the map $Q_p : D_\psi^{1/2p}\N D_\psi^{1/2p}\to L^p(\N,\psi)$ defined by \begin{equation}\label{Q_p}
    Q_p(D_\psi^{1/2p}xD_\psi^{1/2p}) = D_\psi^{1/2p}Q(x)D_\psi^{1/2p},
\end{equation} extends to a $n$-positive contractive map $Q_p : L^p(\N,\psi)\to L^p(\N,\psi)$. The operator $D_\psi \in L^1(\N,\psi)$ denotes the density operator associated to $\psi$. 
Since $Q$ is a projection, we can easily check that $Q_p$ is also a projection. Now, the goal is to compare the maps $Q_p$ and $\Phi$.

By Proposition \ref{change_of_weight}, we have two completely order identifications: \[\kappa_p : L^p(\N,\psi)\to L^p(\N,\phi_e)\quad\text{and}\quad \sigma_p : L^p(M_2(\M),\varphi_2)\to L^p(M_2(\M),\phi),\] that are restrictions of two topological $*$-isomorphisms, $\tilde{\kappa}$ and $\tilde{\sigma}$, between the algebras of measurable operators . By convenience, we now denote by $\kappa$ and $\sigma$ the maps $\tilde{\kappa}$ and $\tilde{\sigma}$ respectively, as well as their restrictions on $L^p$ and $L^1$. 
Then, since we identify $L^1(\N,\phi_e)$ with the subspace $eL^1(M_2(\M),\phi)e$ of $L^1(M_2(\M),\phi)$ (see Proposition \ref{eL^pe}), we have $\sigma(H^p) \in L^1(\N,\phi_e)$ and
\begin{equation}\label{D=}
    D_\psi = \kappa^{-1}\left(\sigma(H)^p\right) = \kappa^{-1}\left(\sigma(H^p)\right).
\end{equation} Indeed, 
for any $x\in\N\subset M_2(\M)$, we have \begin{equation*}
    \begin{split}
    \psi(x) = \Tr_{\varphi_2}(H^px) \overset{\text{Prop. } \ref{change_of_weight}}{=} \left(\Tr_\phi \circ \sigma\right)(H^px) = \Tr_\phi\left(\sigma(H^px)\right).
\end{split}
\end{equation*}
We deduce \begin{equation}\label{psi(x)=}
    \psi(x) = \Tr_\phi\left(\sigma(H)^px\right).
\end{equation}
Note that $e=\operatorname{s}(H)$, hence $eH^pe = H^p$. Since the map $\sigma$ is the restriction of a topological $*$-isomorphism that preserves $M_2(\M)$ (see Proposition \ref{change_of_weight}), we have $$\sigma(H)^p= \sigma(H^p) = \sigma(eH^pe) = e\sigma(H^p)e = e\sigma(H)^pe \in eL^1(M_2(\M),\phi)e.$$ We deduce that for any $x\in\N$, $\sigma(H)^px = e\sigma(H)^pxe \in eL^1(M_2(\M)\phi)e = L^1(\N,\phi_e)$.
Hence we have, for $x\in\N$,
\begin{equation*}
    \begin{split}
\psi(x) \overset{(\ref{psi(x)=})}{=} \Tr_\phi\left(\sigma(H)^px)\right) &\overset{\text{Prop. }\ref{eL^pe}}{=}  \Tr_{\phi_e}\left(\sigma(H)^px)\right) \\ &\overset{\text{Prop. }\ref{change_of_weight}}{=} \left(\Tr_\psi\circ\kappa^{-1}\right)(\sigma(H)^px)\\ &= \Tr_\psi\left(\kappa^{-1}\left(\sigma(H)^p\right)x\right).
\end{split}
\end{equation*}
The last equality holds because $x\in\N$, hence $$\kappa^{-1}(\sigma(H)^px) = \kappa^{-1}\left(\sigma(H)^p\right)x.$$
Since $\psi(x) = \Tr_\psi(D_\psi x)$ for any $x\in\N$, we deduce (\ref{D=}). From this equality, we deduce that \begin{equation}\label{H=}
    H = \sigma^{-1}\left(\kappa(D_\psi)\right)^{1/p} = \sigma^{-1}\left(\kappa(D_\psi^{1/p})\right).
\end{equation}

For any $x\in M_2(\M)$, we have $H^{1/2}xH^{1/2}\in L^p(M_2(\M),\varphi_2)$, and \begin{equation*} \begin{split}
\Phi(H^{1/2}xH^{1/2}) = H^{1/2}Q(x)H^{1/2} &\overset{(\ref{H=})}{=} \sigma^{-1}\left(\kappa(D_\psi^{1/2p})\right)Q(x)\sigma^{-1}\left(\kappa(D_\psi^{1/2p})\right)\\ &= \sigma^{-1}\left(\kappa\left(D_\psi^{\frac{1}{2p}}Q(x)D_\psi^{\frac{1}{2p}}\right)\right)\\ &\overset{(\ref{Q_p})}{=} \sigma^{-1}\left(\kappa\left(Q_p(D_\psi^{\frac{1}{2p}}xD_\psi^{\frac{1}{2p}})\right)\right)\\ &\overset{(\ref{D=})}{=}\sigma^{-1}\left(\kappa\left(Q_p\left(\kappa^{-1}\left(\sigma(H)^{\frac{1}{2}}\right)x\kappa^{-1}\left(\sigma(H)^{\frac{1}{2}}\right)\right)\right)\right)\\
&= \sigma^{-1}\left(\kappa\left(Q_p\left(\kappa^{-1}\left(\sigma\left(H^{\frac{1}{2}}xH^{\frac{1}{2}}\right)\right)\right)\right)\right).
\end{split}\end{equation*}
Note that $Q(x)\in\mathcal{N}$, hence $(\sigma^{-1}\left(\kappa(Q(x))\right) = Q(x)$, this explains the third equality.
Since the subspace $H^{1/2}M_2(\M)H^{1/2}$ is dense in $eL^p(M_2(\M),\varphi_2)e$, we obtain the following equality:  \[\sigma^{-1}\left(\kappa\left(Q_p\left( \kappa^{-1}\left(\sigma(x)\right)\right)\right)\right) = \Phi(x),\quad x\in eL^p(M_2(\M),\varphi_2)e.\]
Now, set $\Psi : x\in L^p(M_2(\M),\varphi_2) \mapsto exe\in eL^p(M_2(\M),\varphi_2)e$. It is a contraction, and since $\Phi(x) = \Phi(exe)$ for any $x\in L^p(M_2(\M),\varphi_2)$, we have \begin{equation}\label{Phi=}
    \sigma^{-1}(\kappa(Q_p(\kappa^{-1}(\sigma(\Psi(x)))))) = \Phi(x), \text{ for } x\in L^p(M_2(\M),\varphi_2)= S^p_2(L^p(\M)).
\end{equation}
Since all maps on the left side of this equality are contractive, we obtain that the $n$-positive projection $\Phi : L^p(M_2(\M),\varphi_2)\to L^p(M_2(\M),\varphi_2)$ is contractive.

\end{proof}

\begin{proof}[Proof of Theorem \ref{main1}]
Consider the contractive and completely positive projection $$\Phi := \left[\begin{array}{cc}
    P_1 & P \\
    P^\circ & P_2
\end{array}\right] :S^p_2(L^p(\M,\varphi))\to S^p_2(L^p(\M,\varphi)) ,$$ with $P_1,P_2 : L^p(\M,\varphi)\to L^p(M,\varphi)$ some contractive projections. Their existence follows from Proposition \ref{Phi_proj} and Proposition \ref{phi_contr}. We use the same notation as in the previous proof. Then, we have a normal, contractive and completely positive projection $Q_{|\mathcal{N}} : \N\to \N$. By \cite[Proposition 4.1]{AR}, it is a conditional expectation. Denote by $\tilde{\N}$ its range, which is a von Neumann subalgebra of $\N$. Then, according to \cite[Example 5.8]{HJX}, the range of $Q_p$ is $L^p(\tilde{\N}, \psi_{|\tilde{\mathcal{N}}})$. We denote $\tilde{\phi}:= \psi_{|\tilde{\mathcal{N}}}$. Then, using the identity (\ref{Phi=}), we have \begin{equation*}
\begin{split}
    \Ran(\Phi) &= \sigma^{-1}\left(\kappa\left(Q_p\left( \kappa^{-1}\left(\sigma\left(\Psi\left(L^p(M_2(\M), \varphi_{2})\right)\right)\right)\right)\right)\right)\\ &= \sigma^{-1}\left(\kappa\left(Q_p\left( \kappa^{-1}\left(\sigma\left(eL^p(M_2(\M),\varphi_2)e\right)\right)\right)\right)\right)\\ &= \sigma^{-1}\left(\kappa\left(\Ran(Q_p)\right)\right)\\ &=  \sigma^{-1}\left(\kappa\left(L^p(\tilde{\N},\tilde{\phi})\right)\right).
\end{split}
\end{equation*} Note that the third equality hold because $$\sigma(eL^p(M_2(\M),\varphi_2)e) = e\sigma(L^p(M_2(\M),\varphi_2))e = eL^p(M_2(\M),\phi)e = L^p(\N,\phi_e),$$ and $\kappa^{-1}(L^p(\N,\phi_e)) = L^p(\N,\psi)$.

Set $e_1 := \left[\begin{array}{cc}
    \operatorname{s}(h) & 0 \\
    0 & 0
\end{array}\right]\in \N$, and set $e_2 := \left[\begin{array}{cc}
    0 & 0 \\
    0 & \operatorname{s}(k)
\end{array}\right] = 1_\mathcal{N} - e_1\in \N$.
Then, \[\Ran(P) \overset{\text{compl. isometric}}{\simeq} \left[\begin{array}{cc}
    0 & \Ran(P) \\
    0 & 0
\end{array}\right] \overset{(\ref{Phi_proj})}{=} \left[\begin{array}{cc}
    0 & \operatorname{s}(h)\Ran(P)\operatorname{s}(k) \\
    0 & 0
\end{array}\right] = e_1\Ran(\Phi)e_2.\]
 Since $e_1, e_2$ are in $\N$ and $\sigma$, $\kappa$ are the identity on $\N\subset M_2(\M)$ and are bimodular, we have \[\Ran(P) \simeq e_1\Ran(\Phi)e_2 = e_1\sigma^{-1}\left(\kappa\left(L^p(\tilde{\N},\tilde{\phi})\right)\right)e_2 = \sigma^{-1}\left(\kappa\left((e_1 L^p(\tilde{\N},\tilde{\phi})e_2\right)\right).\] Note that $$H^{1/2}Q(e_1)H^{1/2} = \Phi(H^{1/2}e_1H^{1/2}) = \Phi\left(\left[\begin{array}{cc}
    h  & 0 \\
    0  & 0
 \end{array}\right]\right)= \left[\begin{array}{cc}
    h  & 0 \\
    0  & 0
 \end{array}\right] = H^{1/2}e_1H^{1/2}.$$ We deduce that $e_1$ is in $\tilde{\N} = \Ran(Q).$ Similarly, $e_2\in \tilde{\N}.$ This ends the proof of Theorem \ref{main1}.
 \end{proof}
 
 \begin{remark}
 We can equip the space $L^p(\M,\varphi)$ with a bimodule structure, more precisely, with the same notation as above, it is a left $e_1\tilde{\N}e_1$-module and a right $(1-e_1)\tilde{\N}(1-e_1)$-module. Considering this bimodule structure, we can show that $P$ is a $e_1\tilde{\N}e_1 - (1-e_1)\tilde{\N}(1-e_1)$-bimodular map.
 
 We first highlight the bimodule structure of $L^p(\M,\varphi)$. We keep the same notation as above. Define the linear isometry \[\pi : x\in L^p(\M,\varphi)\mapsto \left[\begin{array}{cc}
    0  & x \\
    0  & 0
 \end{array}\right] \in S^p_2(L^p(\M,\varphi)).\] If $a\in e_1\tilde{\N}e_1$, we can write $a = \left[\begin{array}{cc}
    a'  & 0 \\
     0 & 0
 \end{array}\right]$, with $a'\in \operatorname{s}(h)\M \operatorname{s}(h)$. Then \[a\pi(x) = \left[\begin{array}{cc}
    a'  & 0 \\
     0 & 0
 \end{array}\right]\left[\begin{array}{cc}
    0  & x \\
    0  & 0
 \end{array}\right] = \left[\begin{array}{cc}
    0  & a'x \\
    0  & 0
 \end{array}\right] \in \Ran(\pi).\] It implies that $L^p(\M,\varphi)$ is a left $e_1\tilde{\N}e_1$-module with the bilinear contractive map $(a,x)\in e_1\tilde{\N}e_1\times L^p(\M,\varphi)\mapsto \pi^{-1}\left(a\pi(x)\right)$.
 
 Similarly, if $b\in (1-e_1)\tilde{\N}(1-e_1)$, we can write $b = \left[\begin{array}{cc}
    0  & 0 \\
    0  & b'
 \end{array}\right]$, with $b'\in \operatorname{s}(k)\M \operatorname{s}(k)$. Then \[b\pi(x) = \left[\begin{array}{cc}
    0  & 0 \\
     0 & b'
 \end{array}\right]\left[\begin{array}{cc}
    0  & x \\
    0  & 0
 \end{array}\right] = \left[\begin{array}{cc}
    0  & b'x \\
    0  & 0
 \end{array}\right] \in \Ran(\pi).\] We deduce that $L^p(\M,\varphi)$ is a right $(1-e_1)\tilde{\N}(1-e_1)$-module.
 
 Now, to see the bimodularity of $P$, note that for every $x\in L^p(\M,\varphi)$, \[\Phi\left(\pi(x)\right) = \Phi\left(\left[\begin{array}{cc}
    0  & x \\
    0  & 0
\end{array}\right]\right) = \left[\begin{array}{cc}
    0  & P(x) \\
    0  & 0
\end{array}\right] = \pi\left(P(x)\right).\] According to \cite[Theorem 1.1]{AR}, the map $\Phi$ is $\tilde{\N}$-bimodular.
Then, for $a = \left[\begin{array}{cc}
    a'  & 0 \\
     0 & 0
 \end{array}\right] \in e_1\tilde{\N}e_1$, $b = \left[\begin{array}{cc}
    0  & 0 \\
    0  & b'
 \end{array}\right] \in (1-e_1)\tilde{\N}(1-e_1)$, and $x\in L^p(\M,\varphi)$ we have \[\Phi(a\pi(x)b) = a\Phi(\pi(x))b = a\pi(P(x))b = \left[\begin{array}{cc}
    0  & a'P(x)b' \\
    0  & 0
 \end{array}\right],\] and \[\Phi(a\pi(x)b) = \Phi\left(\left[\begin{array}{cc}
    0  & a'xb' \\
    0  & 0
 \end{array}\right]\right) = \left[\begin{array}{cc}
    0  & P(a'xb') \\
    0  & 0
 \end{array}\right].\] We deduce that \(P(a'xb') = a'P(x)b'\), meaning that the projection $P$ is a $e_1\tilde{\N}e_1-(1-e_1)\tilde{\N}(1-e_1)$-bimodular map.
 
 \end{remark}

 
\begin{corollary}
Let $1<p<\infty$, let $\M$ be a $\sigma$-finite von Neumann algebra equipped with a normal faithful state. Let $P : L^p(\M,\varphi)\to L^p(\M,\varphi)$ be a bounded projection. Then $P$ is contractively $n$-pseudo decomposable for $n\geq 2$ if and only if $P$ is contractively decomposable.
\end{corollary}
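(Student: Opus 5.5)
The implication ``contractively decomposable $\Rightarrow$ contractively $n$-pseudo decomposable'' is immediate from the inequality $\|P\|_{n,\mathrm{dec}}\leq\|P\|_\mathrm{dec}$ recalled after Definition \ref{def_n_pseudo_dec}. So the work is in the converse: given a contractively $n$-pseudo decomposable projection $P$ with $n\geq 2$, we must exhibit maps $v_1,v_2$ of norm at most $1$ for which the block map is \emph{completely} positive.

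The plan is to run the proof of Theorem \ref{main1} with $n$-positivity in place of complete positivity.

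First, Proposition \ref{Phi_proj} gives $n$-positive contractive projections $P_1,P_2$ such that $\Phi=\left[\begin{array}{cc} P_1 & P\\ P^\circ & P_2\end{array}\right]$ is an $n$-positive projection. By Proposition \ref{phi_contr_Lp}, $\Phi$ is contractive.

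Next, with the notation of that proof, the lifted map $Q:M_2(\M)\to\N$ is normal, unital, contractive and $n$-positive. Its restriction $Q_{|\N}$ is a faithful normal projection, by \cite[Lemma 7.6 and Lemma 7.7]{A_dec}.

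The key step is to upgrade $Q_{|\N}$ to a conditional expectation using only $2$-positivity. A unital $2$-positive map satisfies the Kadison--Schwarz inequality $Q(x)^*Q(x)\leq Q(x^*x)$. For a faithful $2$-positive unital idempotent, the standard Choi--Effros / Tomiyama-type argument then shows that the range is a $*$-subalgebra and that $Q$ is bimodular over it; this is the point where $n\geq 2$ is needed. The argument runs as follows.
\begin{enumerate}
\item For $y\in\Ran(Q)$, apply $Q$ to $Q(y^*y)-y^*y\geq 0$ to get $Q(Q(y^*y)-y^*y)=0$.
\item Faithfulness of $Q$ then gives $y^*y=Q(y^*y)\in\Ran(Q)$, so $\Ran(Q)$ is a $*$-subalgebra.
\item The multiplicative-domain argument from Kadison--Schwarz gives bimodularity over $\Ran(Q)$.
\end{enumerate}
I expect this to be precisely the content of \cite[Proposition 4.1]{AR}. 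If that reference assumes complete positivity, I would redo it under $2$-positivity as above. Once $Q_{|\N}$ is a conditional expectation, Tomiyama's theorem makes it completely positive.

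Finally, I need to transfer complete positivity back to $\Phi$. By Theorem \ref{HJX}, $Q_p$ is the $L^p$-extension of a completely positive map preserving $\psi$, hence completely positive; completeness is obtained by applying the same theorem to $\Id_{M_m}\otimes Q$ with $\tr_m\otimes\psi$. The maps $\sigma,\kappa$ are completely order isomorphisms (Proposition \ref{change_of_weight}), and $\Psi:x\mapsto exe$ is completely positive. The identity (\ref{Phi=}) then shows that $\Phi$ is completely positive. Hence $P$ is decomposable with $v_1=P_1$, $v_2=P_2$, both contractions, so $\|P\|_\mathrm{dec}\leq 1$.

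The main obstacle is the middle step: verifying that a faithful normal unital $2$-positive projection on $\N$ is a conditional expectation. In particular, the case $n=1$ must genuinely fail this step, which is why the corollary is stated for $n\geq 2$.
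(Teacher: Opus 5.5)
Your proof is correct, but it takes a different route from the paper.

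The paper's proof is one line. Once Proposition \ref{phi_contr_Lp} shows that the $n$-positive projection $\Phi$ is contractive, it invokes \cite[Theorem 1.1]{AR}, which says that a $2$-positive contractive projection on a noncommutative $L^p$-space is completely positive. Contractivity of $\Phi$ is exactly the hypothesis that external result needs.

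You instead open that black box inside the machinery already built in the proof of Proposition \ref{phi_contr_Lp}:
\begin{enumerate}
\item Kadison--Schwarz for the unital $2$-positive map $Q_{|\N}$, together with faithfulness, polarization and normality, makes $\Ran(Q_{|\N})$ a von Neumann subalgebra.
\item Tomiyama's theorem then makes $Q_{|\N}$ a completely positive conditional expectation.
\item The factorization (\ref{Phi=}) through complete order isomorphisms carries complete positivity back to $\Phi$.
\end{enumerate}
Two small remarks on these steps. Tomiyama already gives bimodularity, so your multiplicative-domain step is redundant. Unitality of $Q_{|\N}$, i.e.\ $Q(e)=e$, follows from $\Phi(H)=H$.

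What your route buys:
\begin{itemize}
\item It is self-contained modulo the lifting theorem and Tomiyama.
\item It shows that the proof of Theorem \ref{main1}, which uses the same conditional-expectation step via \cite[Proposition 4.1]{AR}, runs verbatim under $n$-positivity for $n\geq 2$.
\item It pinpoints that $n\geq 2$ enters only through Kadison--Schwarz.
\item It uses only the factorization (\ref{Phi=}), not the contractivity of $\Phi$ as a separate input.
\end{itemize}
The paper's route is shorter but rests on an external theorem.

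One caution about your last sentence. The conditional-expectation step does fail for merely positive projections; for instance $x\mapsto \frac{1}{2}(x+x^{\top})$ on $M_2$ is a faithful unital positive projection whose range is not an algebra. That does not show the corollary fails for $n=1$. Indeed, Theorem \ref{main2} shows the equivalence holds for $n=1$ on Schatten spaces.
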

\begin{proof}
We just need to prove the direct implication. If $P$ is contractively $n$-pseudo decomposable, for $n\geq 2$, by Proposition \ref{phi_contr_Lp}, we have a contractive $n$-positive projection $\Phi:= \left[\begin{array}{cc}
   P_1  & P \\
    P^\circ & P_2
\end{array}\right] :S^p_2\left(L^p(\M,\varphi)\right) \to S^p_2\left(L^p(\M,\varphi)\right)$, with $P_1,P_2:L^p(\M,\varphi)\to L^p(\M,\varphi)$ some contractive $n$-positive projections. Then according to \cite[Theorem 1.1]{AR}, $\Phi$ is a completely positive map, hence $P$ is contractively decomposable.
\end{proof}


\section{Contractive projections in Schatten spaces}\label{section dec Sp}
\subsection{Preliminaries}
We assume that the reader is familiar with Schatten spaces $S^p(H,K)$, for which we refer e.g. to \cite{McCarthy}. We only consider separable Hilbert spaces.

We use a natural embedding for tensor products. Let $H,$ $K$, $H'$ and $K'$ be Hilbert spaces. Then for $1\leq p<\infty$, we have \[S^p(H,K)\otimes S^p(H',K') \subset S^p\left(H\overset{2}{\otimes}H',  K\overset{2}{\otimes}K'\right)\] where $\overset{2}{\otimes}$ denotes the Hilbertian tensor product. The left-hand side of this inclusion is a dense subspace of the right-hand side. For any $a\in S^p(H,K),~b\in S^p(H',K')$, $\|a\otimes b\|_p = \|a\|_p\|b\|_p$.

For $1\leq p<\infty$, a set $A$ and a family of Banach spaces $(X_\alpha)_{\alpha\in A}$, we define the $p-$direct sum \[\bigoplus^p_{\alpha \in A} X_\alpha := \{(x_\alpha)_{\alpha \in A},\quad x_\alpha\in X_\alpha,~\sum_{\alpha \in A} \|x_\alpha\|^p<\infty\},\] endowed with the norm \[\|(x_\alpha)_{\alpha}\| := \left(\sum_{\alpha \in A} \|x_\alpha\|^p\right)^{\frac{1}{p}}.\] This is a Banach space.

For $(H_\alpha)_{\alpha \in A}$, and $(K_\alpha)_{\alpha \in A}$ families of Hilbert spaces, we have the embedding \[ \bigoplus^p_{\alpha \in A} S^p(H_\alpha,K_\alpha) \subset S^p\left(\bigoplus^2_{\alpha \in A} H_\alpha, \bigoplus^2_{\alpha \in A} K_\alpha\right),\] defining an element $x = (x_\alpha)_{\alpha}\in\bigoplus\limits^p_{\alpha \in A} S^p(H_\alpha,K_\alpha) $ as \[x((h_\alpha)_{\alpha}) := (x_\alpha(h_\alpha))_{\alpha},\quad \text{for }(h_\alpha)_{\alpha}\in \bigoplus\limits^2_{\alpha \in A} H_\alpha.\] Observe that this embedding determines a positive cone on \(\bigoplus\limits^p_{\alpha \in A} S^p(H_\alpha)\). We note that an element $(x_\alpha)_\alpha \in \bigoplus\limits^p_{\alpha \in A} S^p(H_\alpha)$ is positive if and only if each $x_\alpha$ is positive.
\subsubsection{Decomposable maps in $S^p$}

Let $H,K$ be Hilbert spaces, and let $1\leq p\leq \infty$. For $x$ an element of $S^p(H\overset{2}{\oplus}K)$, we can write $$x = \left[\begin{array}{cc}
   x_1  &  x_2\\
    x_3 & x_4
\end{array}\right],$$ with $x_1\in S^p(H)$, $x_2\in S^p(K,H)$, $x_3\in S^p(H,K)$ and $x_4\in S^p(K)$. If $s_H : H\overset{2}{\oplus}K\to H$ is the orthogonal projection onto $H$, and $s_K : H\overset{2}{\oplus}K\to K$ the orthogonal projection onto $K$, then $x_1 = s_Hxs_H^*$, $x_2 = s_Hxs_K^*$, $x_3 = s_Kxs_H^*$ and $x_4 = s_Kxs_K^*$. Then we have the canonical embedding $$S^p(K,H)\hookrightarrow S^p(H\overset{2}{\oplus}K),\quad x\mapsto \left[\begin{array}{cc}
    0 & x \\
    0 & 0
\end{array}\right] = s_H^*xs_K.$$

For $H,K,H',K'$ some Hilbert spaces, and for bounded maps $T_1 : S^p(H)\to S^p(H')$, $T_2: S^p(K,H)\to S^p(H',K')$, $T_3 : S^p(H,K)\to S^p(H',K')$ and $T_4 : S^p(K)\to S^p(K')$, we can consider the bounded map $T_{1234}:= \left[\begin{array}{cc}
    T_1 & T_2 \\
    T_3 & T_4
\end{array}\right] : S^p(H\overset{2}{\oplus}K)\to S^p(H'\overset{2}{\oplus}K')$ defined by  \[T_{1234}\left(\left[\begin{array}{cc}
   x_1  &  x_2\\
    x_3 & x_4
\end{array}\right]\right) = \left[\begin{array}{cc}
   T_1(x_1)  & T_2(x_2) \\
   T_3(x_3)  & T_4(x_4)
\end{array}\right],\quad \text{for }\left[\begin{array}{cc}
   x_1  &  x_2\\
    x_3 & x_4
\end{array}\right]\in S^p(H\overset{2}{\oplus}K).\]
Then, for $T: S^p(K,H)\to S^p(K',H')$, we can examine operator the operator \begin{equation}\label{T_tilde}
    \tilde{T} := \left[\begin{array}{cc}
    0 & T \\
    0 & 0
\end{array}\right] : S^p(H\overset{2}{\oplus}K)\to S^p(H'\overset{2}{\oplus}K').
\end{equation} We can also write \[\tilde{T}(x) = s_{H'}^*T(s_Hxs_K^*)s_{K'},\quad x\in S^p(H\overset{2}{\oplus}K).\]

We now define a decomposable map in the setting of Schatten spaces $S^p(K,H)$. We will see that we can define it with two equivalent formulations.
\begin{definition}\label{dec_on_Sp}Let $H,K,H',K'$ be Hilbert spaces, let $1\leq p\leq \infty$. We say that the operator $T : S^p(K,H)\to S^p(K',H')$ is decomposable (resp. $n$-pseudo-decomposable for $1\leq n\leq \infty$) if there exist bounded linear maps $u_1 : S^p(H)\to S^p(H')$, $u_2 : S^p(K)\to S^p(K')$ such that the map \[\Phi := \left[\begin{array}{cc}
    u_1 & T \\
    T^\circ & u_2
\end{array}\right] : S^p(H\overset{2}{\oplus}K)\to S^p(H'\overset{2}{\oplus}K')\] is completely positive (resp. $n$-positive). In this case, set $\|T\|_\mathrm{dec} := \inf\max\{\|u_1\|,\|u_2\|\}$ where the infimum is taken on all the maps $u_1,u_2$ such that $\Phi$ is completely positive (resp. set $\|T\|_{n,\mathrm{dec}} := \inf\max\{\|u_1\|,\|u_2\|\}$ with the infimum taken on all the maps $u_1,u_2$ such that $\Phi$ is $n$-positive).
\end{definition}
\begin{remark}
If $K=H$, we recover the usual definition of a decomposable operator (resp. $n$-pseudo-decomposable operator).
\end{remark}

\begin{lemma}\label{T-T_tilde}
Let $H,K,H',K'$ be Hilbert spaces, let $1\leq p\leq \infty$. The operator $T : S^p(K,H)\to S^p(K',H')$ is decomposable (resp. $n$-pseudo-decomposable for $1\leq n\leq \infty$) in the sense of Definition \ref{dec_on_Sp} if and only if the operator $\tilde{T} :S^p(H\overset{2}{\oplus}K)\to S^p(H'\overset{2}{\oplus}K')$ defined as in (\ref{T_tilde}) is decomposable (resp. $n$-pseudo-decomposable) in the sense of Definition \ref{dec_on_Lp}.

In this case, we have $\|T\|_\mathrm{dec} = \|\tilde{T}\|_\mathrm{dec}$ (resp. $\|T\|_{n,\mathrm{dec}} = \|\tilde{T}\|_{n,dec}$.)
\end{lemma}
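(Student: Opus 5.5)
The plan is to exhibit explicit transfers in both directions between the $2\times 2$ block maps of Definition \ref{dec_on_Sp} and those of Definition \ref{dec_on_Lp}, working in $S^p_2(S^p(H\overset{2}{\oplus}K)) = S^p(\ell^2_2\overset{2}{\otimes}(H\overset{2}{\oplus}K))$. Write $L := H\overset{2}{\oplus}K$ and $L' := H'\overset{2}{\oplus}K'$. Note first that $\tilde{T}^\circ(x) = \tilde{T}(x^*)^* = s_{K'}^*T^\circ(s_Kxs_H^*)s_{H'}$, i.e. $\tilde{T}^\circ = \left[\begin{array}{cc} 0&0\\ T^\circ&0\end{array}\right]$.

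\emph{Direction} ($\tilde{T}$ decomposable $\Rightarrow$ $T$ decomposable). Suppose $v_1,v_2 : S^p(L)\to S^p(L')$ are such that $\Phi = \left[\begin{array}{cc} v_1 & \tilde{T}\\ \tilde{T}^\circ & v_2\end{array}\right]$ is completely positive (resp. $n$-positive). Define the compression $J : S^p(H\overset{2}{\oplus}K)\to S^p_2(S^p(L))$ sending $\left[\begin{array}{cc} x_1&x_2\\x_3&x_4\end{array}\right]$ to the $4\times 4$ block matrix that places $x_1$ in the $H$-corner of the $(1,1)$-entry, $x_4$ in the $K$-corner of the $(2,2)$-entry, $x_2$ in the $(H,K)$-corner of the $(1,2)$-entry and $x_3$ symmetrically. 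Then $J(x) = W x W^*$ for the isometry $W : L\to \ell^2_2\otimes L$, $W(h,k) = (s_H^*h, s_K^*k)$. Hence $J$ is completely positive and isometric, and similarly for $J'$ built from $W'$. The map $\Phi_0 := J'^{*}\circ\Phi\circ J$, where $J'^*(y)=W'^*yW'$, is completely positive (resp. $n$-positive), since compositions of such maps are such. A direct computation identifies $\Phi_0$ with $\left[\begin{array}{cc} u_1 & T\\ T^\circ & u_2\end{array}\right]$, where $u_1(a) = s_{H'}v_1(s_H^*as_H)s_{H'}^*$ and $u_2(d) = s_{K'}v_2(s_K^*ds_K)s_{K'}^*$. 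Moreover $\|u_i\|\le\|v_i\|$, so $\|T\|_{\mathrm{dec}}\le\|\tilde T\|_{\mathrm{dec}}$.

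\emph{Direction} ($T$ decomposable $\Rightarrow$ $\tilde{T}$ decomposable). Given $u_1,u_2$ with $\Psi = \left[\begin{array}{cc} u_1 & T\\ T^\circ & u_2\end{array}\right]$ completely positive (resp. $n$-positive), set $v_1(x) := s_{H'}^*u_1(s_Hxs_H^*)s_{H'}$ and $v_2(x) := s_{K'}^*u_2(s_Kxs_K^*)s_{K'}$, so that $\|v_i\|\le\|u_i\|$. To show that $\left[\begin{array}{cc} v_1&\tilde T\\ \tilde T^\circ & v_2\end{array}\right]$ is completely positive, write it as $J'\circ\Psi\circ R$, where $R : S^p_2(S^p(L))\to S^p(L)$ is $R(y) = W^*yW$. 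This is valid because $W^*yW$ has blocks $s_Hy_{11}s_H^*$, $s_Hy_{12}s_K^*$, $s_Ky_{21}s_H^*$ and $s_Ky_{22}s_K^*$, and $J'$ then reinserts the results into the correct corners. Both $R$ and $J'$ are completely positive (resp. $n$-positive), so the composite is too. This gives the reverse inequality, and hence equality of the norms.

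The main obstacle is bookkeeping rather than any analytic difficulty. One must verify that the entries $v_1,v_2$ placed in $\Phi$ compose exactly into $u_1,u_2$, and that the corner embeddings are correct, with the right ordering of $s$ versus $s^*$. The only structural fact needed is that $y\mapsto V^*yV$ is completely positive and contractive on $S^p$ for contractions $V$. The argument applies verbatim for $n$-positivity, with the obvious changes for $p=\infty$.
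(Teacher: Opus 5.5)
Your proposal is correct and is essentially the paper's proof. Your isometry $W$ is exactly the adjoint $S^*$ of the paper's map $S(\xi_1,\xi_2)=(s_H\xi_1,s_K\xi_2)$, so your composites $J'^{*}\circ\Phi\circ J$ and $J'\circ\Psi\circ R$ are the paper's maps $x\mapsto S'\Phi(S^*xS){S'}^*$ and $x\mapsto {S'}^*\Psi(SxS^*)S'$, with the same corner maps $u_i,v_i$ and the same norm inequalities $\|u_i\|\le\|v_i\|$, $\|v_i\|\le\|u_i\|$.
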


\begin{proof}We use the same notation as at the beginning of this subsection. The proof for the $n$-pseudo-decomposable case is similar to the decomposable case; we therefore focus on the decomposable case.
Let $T : S^p(K,H)\to S^p(K',H')$, and assume that $\tilde{T}:S^p(H\overset{2}{\oplus}H)\to S^p(H'\overset{2}{\oplus}K')$ is decomposable. Then there exist some bounded linear maps $v_1,v_2 : S^p(H\overset{2}{\oplus}K)\to S^p(H'\overset{2}{\oplus}K')$ such that the map \[\Phi := \left[\begin{array}{cc}
    v_1 & \tilde{T} \\
    \tilde{T}^\circ & v_2
\end{array}\right] : S^p_2(S^p(H\overset{2}{\oplus}K))\to S^p_2(S^p(H'\overset{2}{\oplus}K'))\] is completely positive. Set $S : (H\overset{2}{\oplus}K)\overset{2}{\oplus}(H\overset{2}{\oplus}K)\to H\overset{2}{\oplus}K,\quad (\xi_1,\xi_2)\mapsto (s_H(\xi_1),s_K(\xi_2))$ and $S' : (H'\overset{2}{\oplus}K')\overset{2}{\oplus}(H'\overset{2}{\oplus}K')\to H'\overset{2}{\oplus}K',\quad (\xi_1,\xi_2)\mapsto (s_{H'}(\xi_1),s_{K'}(\xi_2))$. 
 The maps $S$ and $S'$ may be viewed as $$S = \left[\begin{array}{cc}
    s_H & 0 \\
    0 & s_K
\end{array}\right]\quad\text{and}\quad S' = \left[\begin{array}{cc}
    s_{H'} & 0 \\
    0 & s_{K'}
\end{array}\right].$$ Then, define $\Psi : x\in S^p(H\overset{2}{\oplus}K)\mapsto S'\Phi(S^*xS){S'}^*\in S^p(H'\overset{2}{\oplus}K')$. Since $\Phi$ is completely positive, the map $\Psi$ is completely positive. Set $u_1 : x\in S^p(H)\mapsto s_{H'}v_1(s_H^*xs_H)s_{H'}^*\in S^p(H')$ and $u_2 : x\in S^p(K)\mapsto s_{K'}v_2(s_K^*xs_K)s_{K'}^* \in S^p(K')$. Then we have \[\Psi = \left[\begin{array}{cc}
    u_1 & T \\
    T^\circ & u_2
\end{array}\right],\] and \(\|u_1\|\leq \|v_1\|,~\|u_2\|\leq \|v_2\|.\) It implies that $T$ is decomposbale, and $\|T\|_\mathrm{dec}\leq \|\tilde{T}\|_\mathrm{dec}$.

Conversely, assume $T: S^p(K,H)\to S^p(K',H')$ is decomposable. Then we have bounded linear maps $u_1 : S^p(H)\to S^p(H')$, $u_2 : S^p(K)\to S^p(K')$ such that the map \[\Psi := \left[\begin{array}{cc}
    u_1 & T \\
    T^\circ & u_2
\end{array}\right] : S^p(H\overset{2}{\oplus}K)\to S^p(H'\overset{2}{\oplus}K')\] is completely positive. Define $\Phi :  S^p_2(S^p(H\overset{2}{\oplus}K))\to S^p_2(S^p(H'\overset{2}{\oplus}K'))$ by \[\Phi(x) = {S'}^*\Psi(SxS^*)S'.\] This is a completely positive map. Set $v_1 : x\in S^p(H\overset{2}{\oplus}K)\mapsto s_{H'}^*u_1(s_Hxs_H^*)s_{H'}\in S^p(H'\overset{2}{\oplus}K')$ and $v_2 : S^p(H\overset{2}{\oplus}K)\mapsto s_{K'}^*u_2(s_Kxs_K^*)s_{K'}\in S^p(H'\overset{2}{\oplus}K')$. Then \[\Phi= \left[\begin{array}{cc}
    v_1 & \tilde{T} \\
    \tilde{T}^\circ & v_2
\end{array}\right],\] with \(\|v_1\|\leq \|u_1\|,~\|v_2\|\leq \|u_2\|.\) Hence $\tilde{T}$ is decomposbale, and $\|\tilde{T}\|_\mathrm{dec}\leq \|T\|_\mathrm{dec}$.
\end{proof}

\begin{remark}\label{appli_cb}
With the same notation as before, note that $T: S^p(K,H)\to S^p(K',H')$ is a projection if and only if $\tilde{T}$ is a projection.
Moreover, $T$ is completely bounded if and only if $\tilde{T} : S^p(H\overset{2}{\oplus}K)\to S^p(H'\overset{2}{\oplus}K')$ is completely bounded. In this case, we have $\|T\|_{cb} = \|\tilde{T}\|_{cb}$.
Then, using \cite[Theorem 3.30]{Arh_Krieg}, we deduce that a contractively decomposable operator $T: S^p(K,H)\to S^p(K',H')$ is completely contractive. 
\end{remark}

\begin{remark}\label{phi_contr}
Note that if $p=2$, we can compute \[\left\|\left[\begin{array}{cc}
    a & b \\
    c & d
\end{array}\right]\right\|_{S^2(H\overset{2}{\oplus}K)}^2 = \|a\|^2_{S^2(H)} + \|b\|^2_{S^2(K,H)} + \|c\|^2_{S^2(K,H)} + \|d\|^2_{S^2(K)}.\] Hence if $T: S^2(K,H)\to S^2(K',H')$ is decomposable with \[\Phi := \left[\begin{array}{cc}
    v_1 & T \\
    T^\circ & v_2
\end{array}\right] : S^2(H\overset{2}{\oplus}K)\to S^2(H'\overset{2}{\oplus}K')\] completely positive, then \[\|\Phi\| = \max\{\|v_1\|,\|v_2\|,\|T\|,\|T^\circ\|\}.\] So if $T : S^2(K,H)\to S^2(K',H')$ is contractively decomposable, the map $\Phi$ is contractive. This result is not known if $1<p\neq 2<\infty$.
\end{remark}

The following result is a simple adaptation of \cite[Lemma 5.1]{A_dec} in the space $S^p(K,H)$.

\begin{lemma}\label{axb_dec}
Let $H, H',K,K'$ be Hilbert spaces, let $1\leq p\leq \infty$, and let $a\in B(H,H'), b\in B(K,K')$. The operator \[M_{a,b} : S^p(K',H)\to S^p(K,H'),\quad x\mapsto axb,\] is decomposable, and \[\|M_{a,b}\|_\mathrm{dec}\leq \|a\|_\infty\|b\|_\infty.\]
\end{lemma}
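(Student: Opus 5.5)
The plan is to exhibit the $2\times 2$ block map explicitly as a compression by a single bounded operator, because compressions are automatically completely positive on Schatten classes. Put
\[W := \left[\begin{array}{cc} a & 0 \\ 0 & b^* \end{array}\right] : H\overset{2}{\oplus}K' \to H'\overset{2}{\oplus}K,\]
a bounded operator with $\|W\|_\infty = \max\{\|a\|_\infty,\|b\|_\infty\}$. Then define
\[\Phi : S^p(H\overset{2}{\oplus}K')\to S^p(H'\overset{2}{\oplus}K),\qquad \Phi(x) := W x W^*.\]
Writing $x = \left[\begin{array}{cc} x_1 & x_2 \\ x_3 & x_4 \end{array}\right]$ and multiplying out gives
\[\Phi(x) = \left[\begin{array}{cc} a x_1 a^* & a x_2 b \\ b^* x_3 a^* & b^* x_4 b \end{array}\right].\]
So $\Phi = \left[\begin{array}{cc} u_1 & M_{a,b} \\ M_{a,b}^\circ & u_2 \end{array}\right]$ with $u_1(y) = aya^*$ and $u_2(y) = b^*yb$. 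To identify the $(2,1)$ corner, note that $M_{a,b}^\circ(x_3) = (a x_3^* b)^* = b^* x_3 a^*$, which matches.

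Complete positivity of $\Phi$ is standard. For each $n$, $\Id_{S^p_n}\otimes\Phi$ is the map $y\mapsto (I_n\otimes W)\,y\,(I_n\otimes W)^*$. This sends positive elements of $S^p_n(S^p(H\overset{2}{\oplus}K'))=S^p(\ell^2_n\otimes(H\overset{2}{\oplus}K'))$ to positive elements, since $y\geq0$ implies $zyz^*\geq 0$. Hence $M_{a,b}$ is decomposable in the sense of Definition \ref{dec_on_Sp}.

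The norm estimate uses the ideal property $\|zyw\|_p\leq\|z\|_\infty\|y\|_p\|w\|_\infty$:
\[\|u_1\|\leq\|a\|_\infty^2,\qquad \|u_2\|\leq\|b\|_\infty^2.\]
This yields only $\|M_{a,b}\|_{\mathrm{dec}}\leq\max\{\|a\|^2_\infty,\|b\|_\infty^2\}$, which is not the product $\|a\|_\infty\|b\|_\infty$. This is the main (but easy) obstacle, and I would handle it by rescaling. If $a=0$ or $b=0$, take $u_1=u_2=0$. Otherwise, for $t>0$, replace $a,b$ by $ta$ and $t^{-1}b$: this leaves $M_{a,b}$ unchanged but gives
\[\|u_1\|\leq t^2\|a\|_\infty^2,\qquad \|u_2\|\leq t^{-2}\|b\|_\infty^2.\]
Choosing $t^2=\|b\|_\infty/\|a\|_\infty$ makes both bounds equal to $\|a\|_\infty\|b\|_\infty$, hence $\|M_{a,b}\|_{\mathrm{dec}}\leq \|a\|_\infty\|b\|_\infty$. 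The case $p=\infty$ is identical, with operator norms throughout.
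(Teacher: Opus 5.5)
Your proof is correct: conjugating by $\left[\begin{array}{cc} a & 0 \\ 0 & b^* \end{array}\right]$ gives a completely positive block map with $M_{a,b}$ and $M_{a,b}^\circ$ in the off-diagonal corners, and the rescaling $a\mapsto ta$, $b\mapsto t^{-1}b$ brings both diagonal norms down to $\|a\|_\infty\|b\|_\infty$. The paper gives no proof of its own and only calls the lemma a simple adaptation of Lemma 5.1 of \cite{A_dec}; your argument is the standard one that this adaptation amounts to.
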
 

\begin{lemma}\label{composi_n_dec} Let $1\leq p<\infty$, let $1\leq n\leq \infty$, let $H_1,H_2,H_3$ and $K_1,K_2,K_3$ be Hilbert spaces.
Let $T_1 : S^p(K_1,H_1)\to S^p(K_2,H_2)$, and let $T_2 : S^p(K_2,H_2)\to S^p(K_3,H_3)$ be $n$-pseudo decomposable maps. Then $T_2\circ T_1$ is $n$-pseudo decomposable, with $$\|T_2\circ T_1\|_{n,\mathrm{dec}}\leq \|T_2\|_{n,\mathrm{dec}}\|T_1\|_{n,\mathrm{dec}}.$$
\end{lemma}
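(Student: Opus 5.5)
Let $\Phi_1 = \left[\begin{array}{cc} u_1 & T_1 \\ T_1^\circ & u_2 \end{array}\right]$ be the $n$-positive map given by Definition \ref{dec_on_Sp} for $T_1$, and $\Phi_2 = \left[\begin{array}{cc} w_1 & T_2 \\ T_2^\circ & w_2 \end{array}\right]$ the one for $T_2$. The plan is to compose these matrix maps directly, after fixing the choice of $u_i$ and $w_i$. The norms attain the infimum in the $n$-pseudo decomposable case, as recalled after Definition \ref{def_n_pseudo_dec} (via \cite[Proposition 4.3]{A_dec}). This transfers to Schatten spaces through Lemma \ref{T-T_tilde}. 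Alternatively, we can avoid attainment altogether by working with an $\varepsilon$-approximation and letting $\varepsilon\to 0$ at the end.

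Here $u_1: S^p(H_1)\to S^p(H_2)$, $u_2: S^p(K_1)\to S^p(K_2)$, $w_1: S^p(H_2)\to S^p(H_3)$ and $w_2: S^p(K_2)\to S^p(K_3)$. Then
\[
\Phi_2\circ\Phi_1 = \left[\begin{array}{cc} w_1u_1 & T_2T_1 \\ T_2^\circ T_1^\circ & w_2u_2 \end{array}\right] : S^p(H_1\overset{2}{\oplus}K_1)\to S^p(H_3\overset{2}{\oplus}K_3).
\]
This identity holds because both maps act entrywise on the $2\times 2$ block decomposition. We also have $(T_2T_1)^\circ(x) = T_2(T_1(x^*)^*{}^*)^* = T_2^\circ T_1^\circ(x)$.

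Next we check $n$-positivity of the composition. By definition, $\Phi_i$ is $n$-positive when $\Id_{S^p_n}\otimes\Phi_i$ is positive. Since $\Id_{S^p_n}\otimes(\Phi_2\Phi_1) = (\Id_{S^p_n}\otimes\Phi_2)(\Id_{S^p_n}\otimes\Phi_1)$ is a composition of positive maps, it is positive. This covers $n=\infty$ as well, interpreted for every $n$. Hence $T_2T_1$ is $n$-pseudo decomposable, with witnessing maps $w_1u_1$ and $w_2u_2$. This gives
\[
\|T_2T_1\|_{n,\mathrm{dec}}\le \max\{\|w_1u_1\|,\|w_2u_2\|\}\le \max\{\|w_1\|,\|w_2\|\}\max\{\|u_1\|,\|u_2\|\}.
\]
With the optimal (or $\varepsilon$-optimal) choices, the right-hand side equals (or approaches) $\|T_2\|_{n,\mathrm{dec}}\|T_1\|_{n,\mathrm{dec}}$.

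The only delicate point is the infimum, and the $\varepsilon$-argument settles it without any attainment result. We pick $u_i$ with $\max\|u_i\|\le \|T_1\|_{n,\mathrm{dec}}+\varepsilon$ and $w_i$ similarly, then let $\varepsilon\to0$. We must also ensure the composition makes sense, i.e. that the codomain blocks of $\Phi_1$ match the domain blocks of $\Phi_2$. This holds because both are decomposed with respect to $H_2\overset{2}{\oplus}K_2$.
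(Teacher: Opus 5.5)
Your proof is correct. The three ingredients are each checked properly: the block-wise identity $\Phi_2\circ\Phi_1=\left[\begin{array}{cc} w_1u_1 & T_2T_1\\ (T_2T_1)^\circ & w_2u_2\end{array}\right]$, the stability of $n$-positivity under composition via $\Id_{S^p_n}\otimes(\Phi_2\Phi_1)=(\Id_{S^p_n}\otimes\Phi_2)(\Id_{S^p_n}\otimes\Phi_1)$, and the $\varepsilon$-approximation of the infimum. The $\varepsilon$-argument makes the appeal to attainment of $\|\cdot\|_{n,\mathrm{dec}}$ unnecessary.

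The paper itself states this lemma without proof, presenting it as a routine adaptation of the corresponding fact in \cite{A_dec}. The other natural route would be to observe that $\widetilde{T_2T_1}=\tilde T_2\tilde T_1$, which holds because $s_{H_2}s_{H_2}^*=\Id_{H_2}$ and $s_{K_2}s_{K_2}^*=\Id_{K_2}$, and then transfer the noncommutative $L^p$ version of the composition result through Lemma \ref{T-T_tilde}. Your direct computation is self-contained and needs neither that transfer nor the $L^p$ result.
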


\subsubsection{Contractive projections in $S^p$}
We recall here some notions already used in \cite{AF1}, \cite{B} and \cite{MRR}.
\begin{definition}\label{op disjoint} Let $H,K$ be Hilbert spaces, let $1\leq p \leq \infty$.
Let $x,y\in S^p(K,H)$. We say that $x$ and $y$ are disjoint operators if \[x^*y = 0 \quad \mbox{ and } \quad xy^* = 0.\]

Two subspaces $X_1,X_2$ of the space $S^p(K,H)$ are called operator-disjoint if every pair $(x_1,x_2)\in X_1\times X_2$, consists of disjoint  operators.

A subspace $X$ of the space $S^p(K,H)$ is said to be indecomposable if it cannot be written as the sum of two nontrivial operator-disjoint subspaces. 
\end{definition}

\begin{remark}If $x,y\in S^p(K,H)$ are disjoint, then \begin{equation}\label{relation_norm_disjoint}
    \|x+y\|_p^p = \|x\|^p_p + \|y\|^p_p.
\end{equation}%

If $X_1$ and $X_2$ are two subspaces of $S^p(K,H)$, for $1\leq p\neq 2<\infty$, such that the identity (\ref{relation_norm_disjoint}) holds for every par $(x,y)\in X_1\times X_2$, then these subspaces are operator-disjoint. This follows from the equality case in Clarkson's inequality, see \cite[Theorem 2.7]{McCarthy}.
Therefore, if two subspaces $X_1,X_2$ of the space $S^p(K,H)$ are operator-disjoint, the space $X_1 
+ X_2 \subset S^p(K,H)$ coincide with the space $X_1\overset{p}{\oplus}X_2$. Here, we shall not distinguish between the internal and external direct sum.
\end{remark}

For \( 1 < p < \infty \), it follows from \cite{AF2} that every non-trivial subspace \( X\) of the space \(S^p(H, K) \) admits a direct sum decomposition: \begin{equation}\label{dec_X}
    X = \bigoplus^p_{\alpha} X_\alpha,
\end{equation} where each \( X_\alpha \) is non-trivial, indecomposable and the subspaces \( X_\alpha \) are pairwise operator-disjoint. Note that such a decomposition is unique up to the ordering of the family $(X_\alpha)_\alpha$. Moreover, $X$ is 1-complemented if and only if every subspace $X_\alpha$ is 1-complemented. According to \cite[Lemma 4.4]{B}, $X$ is positively 1-complemented if and only if every $X_\alpha$ is positively 1-complemented. 
\begin{remark}
For $p=1$, the decomposition (\ref{dec_X}) does not hold for any subspace of $S^1(H)$. Nevertheless, if we assume that $X$ is a \textit{1-complemented} subspace of the space $S^1(H)$, according to \cite[Theorem 2.14]{AF1}, we can decompose $X$ as a sum of a family of indecomposable pairwise operator-disjoint subspaces of $S^1(H)$, these subspaces are described in \cite[Theorem 2.14]{AF1} and are the same as in the case $1<p\neq 2<\infty$.

Note also that according to a remark in \cite[p.36]{AF1}, after Theorem 2.1, in $S^1(K,H)$, if we have a 1-complemented subspace that is non-degenerate, then there exists a \textit{unique} contractive projection onto it.
\end{remark}

We introduce a notion of equivalent spaces which is used in \cite{B} and \cite{MRR} to describe the (positively) 1-complemented subspaces of $S^p$.

For a subspace \(X\) of the space \(S^p(H)\), we say that $X$ is $*$-invariant when for any $x\in X$, $x^*\in X$.
\begin{definition}
Let $H,H',K,K'$ be Hilbert spaces, let $X\subset S^p(K,H)$ and $Y\subset S^p(K',H')$ be two closed subspaces. They are equivalent, denoted by $X\sim Y$, if there exist two partial isometries $U:H\to H'$ and $V:K\to K'$ such that  \[X = U^*YV\quad\text{and}\quad Y=UXV^*.\]

If $X\subset S^p(H)$ and $Y\subset S^p(H')$ are two closed, $*$-invariant subspaces, they are positively equivalent, denoted by $X\underset{\geq 0}{\sim}Y$, if there exists a partial isometry $U:H\to H'$  such that \[X = U^*YU\quad\text{and}\quad Y=UXU^*.\]
\end{definition}
\begin{remark}\label{equiv_subspace_1compl}
\begin{enumerate}
    \item According to \cite[Lemma 2.10]{B}, in the definition of equivalent subspaces, if we have two subspaces $X\subset S^p(K,H)$ and $Y\subset S^p(K',H')$ such that $$X\sim Y,$$ then we can choose the partial isometries $U$ and $V$ such that \[U^*U = \operatorname{s}_\ell(X),~~UU^*=\operatorname{s}_\ell(Y),~~V^*V=\operatorname{s}_r(X),~~VV^*=\operatorname{s}_r(Y).\] Hence if the two subspaces $X$ and $Y$ are non-degenerate, then we can choose $U$ and $V$ unitaries. We have similar results in the case of positively equivalent spaces.
    \item If we have two equivalent subspaces $X\subset S^p(K,H)$ and $Y\subset S^p(K',H')$, with partial isometries $U:H\to H'$, $V:K\to K'$ such that $$X=U^*YV,\quad \text{ and }\quad Y=UXY^*,$$ then $X$ is 1-complemented if, and only if $Y$ is. In this case, if $P:S^p(K,H)\to S^p(K,H)$ denotes the contractive projection onto $X$, then the map \begin{equation}\label{passage_P_a_Q}
        Q : x\in S^p(K',H')\mapsto UP(U^*xV)V^* \in S^p(K',H')
    \end{equation} is the contractive projection onto $Y$.
    \item It follows from (\ref{passage_P_a_Q}) that if $X\subset S^p(H)$ and $Y\subset S^p(K)$ are positively equivalent, then $X$ is (completely) positively 1-complemented if and only if $Y$ is.
\end{enumerate}
\end{remark}
We recall the main result of \cite{B}: the description of positively 1-complemented subspaces of $S^p(H)$.
Let \( I \) be a countable index set. We denote by $S^p_{I}$ the space $S^p(\ell^2_I)$. Denote by $\top : S^p_{I} \to S^p_{I}$ the transpose map. Regarding elements of $S^p_{I}$ as scalar matrices, \[\top([t_{ij}]_{i,j\in I}) = [t_{ij}]^\top_{i,j\in I} := [t_{ji}]_{i,j\in I}.\]
 We define the spaces of symmetric and anti-symmetric matrices within \( S^p_I \), respectively, as follows \[\S^p_I := \{w\in S^p_I, \quad w^\top = w\} \quad \mbox{ and } \quad \A^p_I := \{w\in S^p_I, \quad w^\top = -w\}.\]

Let $N\geq 2$ be an integer, and let $\mathcal{H}=\ell^2_N$, with $(e_k)_{1\leq k\leq N}$ denoting the canonical basis. For any $m=0,\ldots ,N$, let $\H^{\wedge m}:= \H\wedge\cdots \wedge \H$ denote the $m$-fold anti-symmetric tensor product of $\H$. We define \( \Lambda_N \) as the Hilbertian direct sum of these tensor products:
\[
\Lambda_N := \bigoplus_{0\leq m\leq N}^{2} \H^{\wedge m}.
\]
This space is known as the Fock space associated with \( \H \). Note that for $0\leq m\leq N$, \[\dim(\H^{\wedge m}) = \binom{N}{m},\quad \text{ and }\quad \dim(\La_N) = 2^N.\]
For $1\leq k\leq N$, the creation operator $c_k$ is defined by
 \[ c_k: \left|\begin{array}{ccc}
    \La_N & \to & \La_N \\
    x & \mapsto & e_k\wedge x
\end{array}\right. .\]

\begin{definition}\label{def_spin} Let \( N \in \mathbb N^* \), $H$ a Hilbert space and let \((w_1, \ldots, w_N)\) be a family of operators in $B(H)$. This family is a spin system in $B(H)$ if \begin{itemize}
    \item \(w_j\) is a self-adjoint unitary operator, for every $j\in \{1,\ldots,N\}$,
    \item $w_iw_j + w_jw_i = 0,$ for every \(1\leq i\neq j\leq N\)
\end{itemize}
\end{definition}

We now define a particular spin system. Let \( N \in \mathbb N^* \), and for each \( 1 \leq k \leq N \), we define
\begin{equation}\label{defs_j}
    s_k := c_k + c_k^* \quad \text{and} \quad s_{-k} := \frac{c_k - c_k^*}{i}.
\end{equation}
It is straightforward to see that the family \( (s_1, s_2, \ldots, s_N, s_{-1}, \ldots, s_{-N}) \) forms a spin system in \( B(\Lambda_N) \). For each nonempty subset \( A \subset \{-N, \ldots, -1,1, \ldots,  N\} \), let $k=|A|$ denote its cardinality, and write \[ A = \{i_1 , i_2 , \ldots , i_k\}, \] with $i_1 < i_2 < \ldots < i_k$. We set
\[
s_A := s_{i_1}s_{i_2}\ldots s_{i_k}.
\] Then if \( A = \emptyset \) , we set $$s_A = 1.$$
The family \( \{s_A \mid A \subset \{-N, \ldots, -1,1, \ldots,  N\}\} \) forms a basis for the \( C^* \)-algebra \[C^*\langle s_1, \ldots, s_N, s_{-1}, \ldots, s_{-N} \rangle,\] generated by the spin system. Since both sides have the same dimension, we have
\[
B(\Lambda_N) = C^*\langle s_1, \ldots, s_N, s_{-1}, \ldots, s_{-N} \rangle.
\]
We define the space \(\mathcal{F}_N\) as
\begin{equation}\label{defFn}
    \mathcal{F}_N := \text{span}\left\{1, s_1, \ldots, s_N, s_{-1}, \ldots, s_{-N}, s_{-N}s_N \ldots s_{-1}s_1\right\} \subset B(\Lambda_N),
\end{equation}
and we define \(\mathcal{E}_{2N}\) as \begin{equation}\label{defE2N}
    \mathcal{E}_{2N} := \text{span}\{1,s_1,\ldots,s_N,s_{-1},\ldots,s_{-N}\}\subset B(\Lambda_N).
\end{equation}
Note that the family $(s_1,\ldots, s_N, s_{-1},\ldots, s_{-N}, i^Ns_{-N}s_N\cdots s_{-1}s_1)$ is also a spin system in $B(\La_N)$.
Next, we introduce the linear map \(\sigma : \F_N\to\F_N\) defined by 
\begin{equation}\label{defsigma}
   \sigma(1)=1,\quad \sigma(s_j) = s_j,\quad \sigma(s_{-N}s_{N}\cdots s_{-1}s_1) = - s_{-N}s_{N}\cdots s_{-1}s_1,
\end{equation}
 for $j\in\{-N,\ldots, -1,1,\ldots, N\}.$
This map is an involution. Moreover, this is an isometry from $\F_N^p$ to itself, where $\F_N^p$ is the subspace $\F_N$ endowed with the $S^p$-norm, for $p\geq 1$. We also denote $\E^p_{2N}$ the subspace $\E_{2N}$ endowed with the $S^p$-norm.

\begin{theorem}\label{main_B}
Let $1\leq p <\infty$, let $H$ be a Hilbert space, and let $X$ be an indecomposable subspace of $S^p(H)$. Then $X$ is positively 1-complemented in $S^p(H)$ if, and only if, $X$ is positively equivalent to one of the following spaces : 
\begin{enumerate}
    \item The space $O\S^p_I \otimes a$, where $O\in \S_I$ is a symmetric unitary operator.
    
    \item The space $O\A^p_I \otimes a$, where $O\in \A_I$ is an anti-symmetric unitary operator.
    
    \item The space $\{(w\otimes a_1,w^\top\otimes a_2),\quad w\in S^p_I\}$.
    
    \item The space $v\E_{2N}^p\otimes a$, where $v\in\E_{2N}$ is a unitary operator.
    
    \item The space $\{(vx\otimes a_1, \sigma(v)\sigma(x)\otimes a_2),\quad x\in \F_N^p\}$, where $v\in\F_N$ is a unitary operator,
\end{enumerate}
with $a,a_1\in S^p(H_1)$, $a_2\in S^p(H_2)$ some positive injective operators on Hilbert spaces $H_1,~H_2$, $I$ a countable set, and $N\geq 2$.
\end{theorem}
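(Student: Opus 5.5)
The plan is to prove the two directions separately. The ``if'' direction is by direct construction of projections. For the ``only if'' direction, the idea is to reduce, through the lifting theorem, to positive unital projections on a corner of $B(H)$, and then to invoke the classification of their ranges as Jordan algebras.

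\textbf{The ``if'' direction.} By Remark \ref{equiv_subspace_1compl}(3) it suffices to treat the five model spaces themselves. For each model $Y\otimes a$, where $Y$ is one of the scalar spaces $O\S_I$, $O\A_I$, $v\E_{2N}$, or one of the twisted pairs in cases (3) and (5), I would write the projection as a product $P=\theta\otimes E_a$.
\begin{itemize}
\item $E_a(y)=\Tr(a^{p-1}y)\,a/\|a\|_p^p$ is the positive contractive projection of $S^p(H_1)$ onto $\Cdb a$.
\item $\theta$ is a unital positive trace-preserving projection onto the corresponding Jordan subalgebra. For (1) and (2) it is the average $x\mapsto \tfrac12(x+Ox^\top O^*)$. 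For (4) it is the conditional expectation onto the spin factor obtained by averaging over the group generated by the spin system. For (3) and (5) it is $(x,y)\mapsto\tfrac12(x+\sigma(y),\sigma(x)+y)$, suitably weighted by $a_1,a_2$.
\end{itemize}
Positivity and contractivity of $\theta$ come from the fact that it is an average of maps each of which is either a $*$-automorphism or a $*$-antiautomorphism composed with unitary conjugation. Contractivity on $S^p$ then follows by interpolation from the cases $p=1$ and $p=\infty$, in the style of Theorem \ref{HJX}.

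\textbf{The ``only if'' direction.} Let $P$ be a positive contractive projection onto $X$.
\begin{enumerate}
\item Since $P$ is adjoint preserving, $X$ is $*$-invariant.
\item By Proposition \ref{existence_h} (with $\M=B(H)$ and the trace), there is $h\in X_+$ with $\operatorname{s}(h)=\operatorname{s}(P)$. Cutting down by this support, I may assume $X$ is non-degenerate.
\item The lifting theorem (Theorem \ref{lifting thm}) gives a unital positive normal map $v$ on $B(\operatorname{s}(h)H)$ with $P(h^{1/2}xh^{1/2})=h^{1/2}v(x)h^{1/2}$.
\item Following \cite[Lemmas 7.6--7.8]{A_dec}, $v$ is a projection, and it preserves the faithful functional $x\mapsto\Tr(h^px)$.
\item By Effros--St\o rmer, $\Ran(v)$ is a JW-algebra for the Choi--Effros product. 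Because $v$ is normal and faithful, it is a JW-subalgebra of $B(H)$ that carries a faithful normal state.
\item Indecomposability of $X$ should correspond to $\Ran(v)$ being a factor, modulo the centre, which produces the $\ell^p$-sum splitting.
\item Apply the classification of type I JW-factors. They are the hermitian parts of reversible representations, namely the symmetric and antisymmetric (quaternionic) matrices, the universal representations of the form $\{(x,x^\top)\}$, and spin factors, either reversible or not, the last giving $\F_N$ with $\sigma$.
\end{enumerate}

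\textbf{Translating back.} It remains to convert the Jordan algebra $\Ran(v)$ into the form of $X=h^{1/2}\Ran(v)h^{1/2}$ (closure). The commutation of $h$ with the Jordan structure, coming from invariance of $\Tr(h^p\,\cdot)$, should force $h$ to split as $u\otimes a$ relative to the factor decomposition. This produces the unitary multipliers $O$ and $v$ and the positive injective operators $a,a_1,a_2$. The unitarity of $U$ for positive equivalence comes from Remark \ref{equiv_subspace_1compl}(1).

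\textbf{Main obstacle and fallback.} I expect this translation step to be the hardest. It means showing that the density $h$ factors compatibly with each JW-factor type, and that the resulting equivalence can be taken with a single partial isometry rather than the pair $U,V$. If it resists, for $p\neq2$ I would instead start from the Arazy--Friedman list of indecomposable 1-complemented subspaces, and eliminate or normalize each Cartan type using $*$-invariance and the existence of a positive element of full support. The case $p=2$ would still go through the Jordan route above.
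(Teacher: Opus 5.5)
The gap is in the ``only if'' direction, at the step you call translating back. That step is where all five spatial forms must be produced, and the mechanism you propose for it does not work. Invariance of $\psi=\Tr(h^p\,\cdot)$ under $v$ does not make the unitary group $h^{is}$, $s\in\Rdb$, which implements the modular group of $\psi$, normalize $A=\Ran(v)$. Nor does $h$ commute with $A$. Takesaki's theorem concerns von Neumann subalgebras, not Jordan ones. Already in model (3), take $h=(d\otimes a_1,d^\top\otimes a_2)$ with $d\in S^p_I$ positive, injective and non-scalar. Then $A=\{(y\otimes 1,y^\top\otimes 1)\}$. But $\operatorname{Ad}(h^{is})$ sends $(y\otimes 1,y^\top\otimes 1)$ to $(d^{is}yd^{-is}\otimes 1,(d^\top)^{is}y^\top(d^\top)^{-is}\otimes 1)$, and this lies in $A$ for all $y$ only if $(d^\top)^{2is}$ is scalar. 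The flow that does preserve $A$ is implemented by $(d^{is}\otimes 1)\oplus((d^\top)^{-is}\otimes 1)$, with the opposite sign on the transposed summand. So you would need a genuinely Jordan modular argument. You would also need the classification of the normal representations of each JW-factor in $B(H)$: multiplicities of the identity and transpose representations, and the two inequivalent representations of odd spin factors, which is where $\sigma$ comes from. The proposal supplies none of this. Two further claims are only asserted. First, that $A$ is of type I. Second, and more seriously, that infinite-dimensional spin factors do not occur. Spin factors are type I$_2$ in every dimension, so the type I classification alone does not exclude them, yet the list contains only $\E_{2N}$ and $\F_N$ with $N$ finite.

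Your fallback is closer to what is actually done. This paper does not reprove the theorem; it quotes it from \cite{B}, which rests on the Arazy--Friedman classification for $p\neq 2$. However, ``eliminate or normalize each Cartan type'' is only a programme, and it is exactly where the unitaries $O$, the twisted pairs, the unitaries $v\in\E_{2N}$ or $v\in\F_N$, and the map $\sigma$ have to be derived. For $p=2$ the Arazy--Friedman list says nothing, and you fall back on the incomplete Jordan route. The missing idea there is the transfer of \cite[Section 6]{B}, reproduced in the proof of Proposition~\ref{projcp}. With $h\in X_+$ of full support, $V_1(hxh)=h^{1/2}P(h^{1/2}xh^{1/2})h^{1/2}$ extends to a positive contractive projection on $S^1(H)$. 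This yields positive contractive projections $V_r$ on $S^r(H)$ for $1\le r\le 2$, and by duality for $2\le r<\infty$. One then classifies $\Ran(V_q)$ for some $q\neq 2$ and carries the description back to $X$ via \cite[Proposition 6.9]{B}.

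Finally, in the ``if'' direction the projection onto $\E_{2N}$ cannot be an average of $*$-automorphisms and $*$-anti-automorphisms. Each map in such an average must fix the unitaries $s_{\pm k}$, since they are extreme points of the ball. So each map is the identity or the reversion anti-automorphism, whose fixed points also contain the products $s_A$ with $|A|=4$ when $N\ge 2$. You must instead check positivity of $x\mapsto\tau(x)1+\sum_j\tau(xs_j)s_j$ directly, with $\tau$ the normalized trace. This is easy: for real $\beta$, $\alpha1+\sum_j\beta_js_j\ge 0$ if and only if $\alpha\ge\|\beta\|_2$, and $|\tau(xs)|\le\tau(x)$ for $x\ge0$ and any symmetry $s$. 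The multiplier $v$ should then be absorbed by writing $v\E_{2N}=u\E_{2N}u^*$ for a unitary $u\in\E_{2N}$ with $u^2\in\Cdb v$.
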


\begin{remark}
Note that in the previous theorem, the subspaces $\S_I$, $\A_I$, $\F_N$ and $\E_{2N}$ are $JC^*$-triple. A $JC^*$-triple is a subspace $J$ of $B(H)$, for $H$ a Hilbert space, such that for every $x,y,z\in J$, $\frac{1}{2}(xy^*z + zy^*x) \in J$. Note also that a unital $JC^*$-triple (as $\S_I$, $\F_N$ and $\E_{2N}$) is $*$-invariant.  For more information about this notion, we refer to \cite[Section 3]{B}, \cite{H2}, and \cite{H}.
\end{remark}

\subsection{Completely positive contractive projection in $S^p$}

\begin{lemma}\label{dec_space} Let $1\leq p<\infty$,  let $H$ be a Hilbert space, and let $X$ be a subspace of $S^p(H)$. Consider a decomposition $$X = \bigoplus_\alpha^p X_\alpha,$$ with a family of indecomposable pairwise operator-disjoint subspace $X_\alpha \subset S^p(H)$. Then $X$ is completely positively 1-complemented if and only if every $X_\alpha$ is completely positively 1-complemented.
\end{lemma}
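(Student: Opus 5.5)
We prove both implications. Throughout, let $e_\alpha$ denote the support projection of $X_\alpha$. Since $X$ is $*$-invariant (being the range of a positive projection) and we may assume each $X_\alpha$ is $*$-invariant, we have $\operatorname{s}_\ell(X_\alpha)=\operatorname{s}_r(X_\alpha)=e_\alpha$. Pairwise operator-disjointness of the $X_\alpha$ gives that the $e_\alpha$ are pairwise orthogonal projections in $B(H)$. (If some $X_\alpha$ were not $*$-invariant, we would group $X_\alpha$ with $X_\alpha^*$, which is again an indecomposable component, so the family is permuted by $*$; it is cleaner to use the decomposition's uniqueness, as in \cite[Lemma 4.4]{B}.)

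For the direction ``each $X_\alpha$ completely positively 1-complemented $\Rightarrow$ $X$ is'', let $P_\alpha$ be a completely positive contractive projection onto $X_\alpha$. We set
\[P(x):=\sum_\alpha P_\alpha(e_\alpha x e_\alpha),\qquad x\in S^p(H).\]
The compression $x\mapsto e_\alpha x e_\alpha$ is completely positive. Replacing $P_\alpha$ by $x\mapsto e_\alpha P_\alpha(e_\alpha xe_\alpha)e_\alpha$ keeps it a completely positive contractive projection onto $X_\alpha$, because elements of $X_\alpha$ are supported in $e_\alpha$. Hence the summands are pairwise disjoint, and we obtain
\[\|P(x)\|_p^p=\sum_\alpha\|P_\alpha(e_\alpha xe_\alpha)\|_p^p\le\sum_\alpha\|e_\alpha xe_\alpha\|_p^p\le\|x\|_p^p,\]
where the last inequality is the standard pinching inequality. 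The same computation applied at the matrix level $S^p_n(S^p(H))=S^p(\ell^2_n\otimes H)$, with the projections $1\otimes e_\alpha$, gives complete positivity, since a sum of completely positive maps is completely positive. Finally, $P$ is a projection onto $X$ because $P_\alpha(e_\alpha x_\beta e_\alpha)=\delta_{\alpha\beta}x_\alpha$ for $x_\beta\in X_\beta$.

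For the converse, let $P$ be a completely positive contractive projection onto $X$. We define $P_\alpha(x):=e_\alpha P(e_\alpha x e_\alpha)e_\alpha$. This map is completely positive and contractive as a composition of such maps. For $x\in X_\alpha$ we have $P_\alpha(x)=e_\alpha xe_\alpha=x$. Moreover, for any $y$, $P(y)\in X=\oplus X_\beta$, and compressing by $e_\alpha$ picks out exactly the $X_\alpha$-component, so $P_\alpha(y)\in X_\alpha$. Hence $P_\alpha$ is a completely positive contractive projection onto $X_\alpha$.

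The main point requiring care is this last step. One must check that $e_\alpha X e_\alpha=X_\alpha$, i.e. that $e_\alpha x_\beta e_\alpha=0$ for $\beta\neq\alpha$. This follows from disjointness: $x_\beta x_\alpha^*=0$ for all $x_\alpha\in X_\alpha$ forces the support of $x_\beta$ to be orthogonal to the range projection of $X_\alpha$, which is $e_\alpha$, and symmetrically on the other side.
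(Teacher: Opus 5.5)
Your route is the same as the paper's, which transposes the proof of \cite[Lemma 4.4]{B} using that compressions $x\mapsto sxs^*$ are completely positive. Your direction ``each $X_\alpha$ completely positively 1-complemented $\Rightarrow$ $X$ is'' is fine, because there each $X_\alpha$ is the range of a positive map, hence $*$-invariant, so its left and right supports coincide.

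The gap is at the first line of the converse direction. There only $X$ is known to be $*$-invariant, but your argument needs $\operatorname{s}_\ell(X_\alpha)=\operatorname{s}_r(X_\alpha)$ for every $\alpha$, and your parenthetical does not establish this. Uniqueness of the decomposition only shows that $X_\alpha\mapsto X_\alpha^*$ permutes the components. A nontrivial permutation is compatible with $X^*=X$. For example, with matrix units in $S^p_2$, the space $X=\operatorname{span}\{e_{12},e_{21}\}=\Cdb e_{12}\oplus^p\Cdb e_{21}$ is $*$-invariant, its components are indecomposable and operator-disjoint, and $(\Cdb e_{12})^*=\Cdb e_{21}$. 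Since $eXe$ is $*$-invariant for every projection $e$, no $e$ satisfies $eXe=\Cdb e_{12}$. Regrouping $X_\alpha$ with $X_\alpha^*$ does not help either, since the conclusion concerns each $X_\alpha$ separately. What rules this situation out is the positivity of $P$, and your argument never uses it at this point.

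A short fix goes as follows.
\begin{itemize}
\item By Proposition \ref{existence_h}, choose $h\in X$ with $h\geq 0$ and $\operatorname{s}(h)=\operatorname{s}(P)$, and write $h=\sum_\beta h_\beta$ with $h_\beta\in X_\beta$.
\item Put $\ell_\alpha=\operatorname{s}_\ell(X_\alpha)$ and $r_\alpha=\operatorname{s}_r(X_\alpha)$. Operator-disjointness gives $\ell_\alpha\perp\ell_\beta$ and $r_\alpha\perp r_\beta$ for $\beta\neq\alpha$. Hence $\ell_\alpha h=h_\alpha=hr_\alpha$, and taking adjoints, $h\ell_\alpha=r_\alpha h$.
\item It follows that $\ell_\alpha h^2=hr_\alpha h=h^2\ell_\alpha$ and $r_\alpha h^2=h\ell_\alpha h=h^2r_\alpha$. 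So $\ell_\alpha$ and $r_\alpha$ commute with $h^2$, hence with $h$.
\item Since $\ell_\alpha,r_\alpha\leq\operatorname{s}_\ell(P)=\operatorname{s}_r(P)=\operatorname{s}(h)$, you get $\ell_\alpha=\ell_\alpha\operatorname{s}(h)=\operatorname{s}(\ell_\alpha h)=\operatorname{s}(h_\alpha)=\operatorname{s}(r_\alpha h)=r_\alpha\operatorname{s}(h)=r_\alpha$.
\end{itemize}
With this common support $e_\alpha$ in hand (and incidentally $h_\alpha=e_\alpha he_\alpha\geq 0$), the rest of your argument goes through unchanged. The $e_\alpha$ are pairwise orthogonal, $e_\alpha Xe_\alpha=X_\alpha$, and $e_\alpha P(e_\alpha\cdot e_\alpha)e_\alpha$ is a completely positive contractive projection onto $X_\alpha$.
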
 
 \begin{proof} The proof is the same as for \cite[Lemma 4.4]{B}, noting that for $s\in B(K,H)$, the map $x\in S^p(H)\mapsto sxs^* \in S^p(K)$ is completely positive. \end{proof}
It follows that to obtain a description of completely positively 1-complemented subspaces of $S^p(H)$, we only need to describe all completely positively 1-complemented and indecomposable subspaces.

\begin{proposition}\label{projcp} Let $1\leq p <\infty$, let $X$ be an indecomposable subspace of $S^p(H)$. Then $X$ is completely positively 1-complemented if, and only if there exist an index set $I$, $a\in S^p(K)$ a positive and injective operator on a Hilbert space $K$, and $U : \ell^2_I(K)\to H$ an isometry such that $$X = U(S^p_I\otimes a)U^*.$$
\end{proposition}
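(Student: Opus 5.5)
The converse direction is the easy one, so I would start there. Suppose $X = U(S^p_I\otimes a)U^*$ with $U:\ell^2_I(K)\to H$ an isometry. By Remark \ref{equiv_subspace_1compl}(3) it suffices to treat $X_0 = S^p_I\otimes a\subset S^p(\ell^2_I(K)) = S^p_I(S^p(K))$. For this space I would write down an explicit projection: $P_0 = \Id_{S^p_I}\otimes E_a$, where $E_a : S^p(K)\to S^p(K)$ is the contractive projection onto $\Cdb a$ given by $E_a(y) = \Tr(a^{p-1}y)\,\|a\|_p^{-p}\,a$. This $E_a$ is a positive rank-one map of the form $y\mapsto \omega(y)a$, with $\omega = \Tr(a^{p-1}\,\cdot\,)\|a\|_p^{-p}$ a positive functional and $a\geq 0$, so it is completely positive. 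Hence $P_0$ is completely positive.

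Contractivity of $P_0$ can be seen in two ways.
\begin{itemize}
\item Write $P_0(x) = D^{1/2p}\,\mathbb{E}(D^{-1/2p}xD^{-1/2p})\,D^{1/2p}$, where $D = 1\otimes a^p/\|a\|_p^p$ and $\mathbb{E}$ is the conditional expectation $B(\ell^2_I)\otimes B(K)\to B(\ell^2_I)\otimes 1$ preserving $\Tr\otimes \omega_a$. Contractivity then follows from the Haagerup-type result of Theorem \ref{HJX} and \cite[Example 5.8]{HJX}.
\item Alternatively, note that $P_0$ is the tensor extension of a complete contraction, because rank-one contractive projections are completely contractive on $S^p$.
\end{itemize}
Once $P_0$ is in hand, the projection onto $X$ is $x\mapsto UP_0(U^*xU)U^*$. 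It is a composition of completely positive contractions, hence completely positive and contractive. I do not need indecomposability for this direction.

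For the forward direction, assume $X$ is indecomposable and $P$ is a completely positive contractive projection onto $X$.
\begin{enumerate}
\item Since $P$ is in particular positive, Theorem \ref{main_B} says $X$ is positively equivalent to one of the five model spaces (1)--(5). By Remark \ref{equiv_subspace_1compl}(3), the model space is then also completely positively 1-complemented.
\item Using uniqueness of the contractive projection for $1<p<\infty$ (smoothness, \cite{Cohen_Sullivan}), the completely positive projection must coincide with the canonical one.
\item The key step is to exclude every model except the full space $S^p_I\otimes a$.
\end{enumerate}

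For the exclusion in step 3, I would use the lifting Theorem \ref{lifting thm} together with \cite[Proposition 4.1]{AR}. If $h$ is a positive element of maximal support in $X$ (Proposition \ref{existence_h}), then the lifted map $v$ is a normal unital completely positive projection. It is therefore a conditional expectation, and its range is a von Neumann algebra, in particular an associative algebra. In the models, however, the lifted range is the corresponding unital $JC^*$-triple: $\S_I$, $O\A_I$ rotated to a unital triple, the $\top$-twisted copy of $B(\ell^2_I)$, $\E_{2N}$, or $\F_N$ with $\sigma$. Among these, the only one that is a $*$-subalgebra, and for which the induced map is completely positive, is $B(\ell^2_I)=S_I$. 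The transpose twist fails to be 2-positive, and $\S_I$, $\E_{2N}$, $\F_N$ are not closed under products when $|I|\geq 2$ or $N\geq 2$. The degenerate cases $|I|=1$ reduce to $\Cdb\otimes a$, which is already of the desired form.

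Having reduced to model (1) with $I$ a singleton or to $S^p_I\otimes a$, unravelling the positive equivalence gives a partial isometry with the right source projection. Replacing $\ell^2_I(K)$ by the support, this partial isometry becomes the required isometry $U:\ell^2_I(K)\to H$.

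The main obstacle is step 3. One has to check carefully that the lifted map for each model really has the triple as its range, and that non-associativity (or the transpose) genuinely contradicts complete positivity, rather than merely contradicting the particular canonical projection.
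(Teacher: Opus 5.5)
\textbf{Comparison with the paper.} Your converse direction is correct and is the paper's argument: the rank-one map $\varphi_a(x)=\Tr(a^{p-1}x)a$ is completely positive and completely contractive, you tensor it with $\Id_{S^p_I}$, then conjugate by $U$. For the forward direction you take a different route. For $p\neq 2$, the paper uses that completely positive contractions on $S^p$ are completely contractive \cite[Proposition 3.31]{Arh_Krieg}. It then applies \cite[Theorem 1.1]{MRR} to get $X=V(S^p_{I,J}\otimes a)U^*$, and uses Theorem \ref{main_B} only to align $U$ and $V$. The case $p=2$ is obtained by transporting $P$ to some $S^q$, $q\neq 2$, via \cite[Section 6]{B}. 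You use only Theorem \ref{main_B} and detect complete positivity through associativity of the lifted range. Once completed, this treats all $1\leq p<\infty$ uniformly and avoids both \cite{MRR} and the $p=2$ transfer.

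\textbf{The gap in step 3.} Step 3 carries the whole argument, and as written it has a gap.

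First, a normal unital completely positive projection need not be a conditional expectation onto a subalgebra. For instance, $(a,b,c)\mapsto(a,b,\frac{a+b}{2})$ on $\Cdb^3$ is such a projection whose range is not an algebra. You need $v$ to be faithful on $\operatorname{s}(h)B(H)\operatorname{s}(h)$; the paper gets this from \cite[Lemmas 7.6--7.8]{A_dec}. It can be shown directly. Normalize $\|h\|_p=1$; then $P^*(h^{p-1})=h^{p-1}$. For $p>1$ this holds because $P^*(h^{p-1})$ norms $h$ and $S^p$ is smooth. For $p=1$ use $0\leq P^*(1)\leq 1$ and $\Tr(P^*(1)h)=\Tr(h)$. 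Hence $v(y)=0$ with $y\geq 0$ forces $\Tr(h^{p/2}yh^{p/2})=0$, so $y=0$. The Kadison--Schwarz inequality then shows that $\Ran(v)$ is a $*$-subalgebra.

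Second, the exclusion of the models is only asserted. It becomes a concrete check once you note that
$\Ran(v)=\{y\in\operatorname{s}(h)B(H)\operatorname{s}(h):\ h^{1/2}yh^{1/2}\in X\}$,
which depends only on $X$ and $h$. This makes your step 2 unnecessary, which matters because smoothness is unavailable at $p=1$ and the statement includes $p=1$. It also replaces the remark that the transpose is not $2$-positive, which concerns one particular projection, by a statement about $X$ itself.

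\textbf{How the check goes.}
\begin{enumerate}
\item Models (1)--(3). The set is $\operatorname{Fix}(\theta)\otimes 1$ for an involutive $*$-antiautomorphism $\theta$: in models (1) and (2) take $\theta(x)=Ox^\top O^*$, and in model (3) take $\theta(y_1,y_2)=(y_2^\top,y_1^\top)$. This uses $\theta(g^{1/2})=g^{1/2}$ for positive fixed $g$. Since $\theta(xy)=\theta(y)\theta(x)$, the set $\operatorname{Fix}(\theta)$ is closed under products only when it is commutative.
\item Model (4). The set is $v\E_{2N}\otimes 1$, not $\E_{2N}\otimes 1$. Indeed, $v\E_{2N}$ is a finite-dimensional $JC^*$-triple containing $1$, hence $*$-invariant. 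Odd functional calculus gives $g^{-1/2}\in v\E_{2N}$, so $g^{-1/2}(v\E_{2N})g^{-1/2}=v\E_{2N}$. You then need one more step. Since $v^*\in v\E_{2N}$, if $v\E_{2N}$ were an algebra you would get $\E_{2N}=v^*(v\E_{2N})\subseteq v\E_{2N}$. By dimension $\E_{2N}$ would itself be an algebra, contradicting $s_1s_2\notin\E_{2N}$ for $N\geq 2$.
\item Model (5). This is handled the same way, through its first block $v\F_N$.
\item Degenerate cases. Besides $|I|=1$, you must include $O\A_2=\Cdb 1$ in model (2).
\end{enumerate}
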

\begin{proof}
To prove the "if" direction, by point (3) of Remark \ref{equiv_subspace_1compl}, it suffices to show that for every index set $I$ and every positive injective operator $a\in S^p(K)$ on a Hilbert $K$ space with $\|a\|_p=1$, the subspace $$X = S^p_I\otimes a\subset S^p(\ell^2_I(K)),$$ is completely positively 1-complemented. This subspace is non-degenerate, so for every $1\leq p<\infty$ (even $p=1$), there is a unique contractive projection onto $S^p_I\otimes a$. Let $P : S^p(\ell^2_I(K))\to S^p(\ell^2_I(K))$ be this contractive projection. Define $\varphi_a :S^p(K)\to S^p(K)$ by \(\varphi_a(x) = \Tr(a^{p-1}x)a.\) Using \cite[Lemma 2.1]{B}, it is straightforward to see  that $\varphi_a$ is completely positive and completely contractive. Then \[P = \Id_{S^p_I}\otimes\varphi_a,\] and we deduce that $P$ is completely positive. 

For the converse, consider $X$ a completely positively 1-complemented, indecomposable subspace of $S^p(H)$, and let $P : S^p(H)\to S^p(H)$ be the contractive and completely positive projection onto $X$. We may assume that $X$ is non-degenerate, that is $\operatorname{s}(P) = 1$, where $\operatorname{s}(P)$ is the support projection of $P$. Indeed, if $X$ is degenerate, let $\tilde{H} = \operatorname{s}(P)(H)$, and let $s: H\to \tilde{H}$ be the orthogonal projection onto $\tilde{H}$. Then $X$ is positively equivalent to the subspace $sXs* \subset S^p(\tilde{H})$ which is non-degenerate, and completely positively 1-complemented, according to point (3) of Remark \ref{equiv_subspace_1compl}.

We first assume that $p\neq 2$. According to \cite[Proposition 3.31]{Arh_Krieg}, $P$ is completely contractive. 
We apply \cite[Theorem 1.1]{MRR}. There exist index sets $I, J$, $a\in S^p(K)$ an operator on Hilbert space $K$ and two isometries $V : \ell^2_I(K)\to H$, $U : \ell^2_J(K)\to H$ such that \[X = V(S^p_{I,J}\otimes a)U^*.\] Using \cite[Lemma 2.12]{B}, we may assume that $a$ is a positive and injective operator. By the point (1) of Remark \ref{equiv_subspace_1compl}, we may assume that $U$ and $V$ are unitaries. Applying Theorem \ref{main_B} (or studying its proof in \cite{B} for the case $X = V(S^p_{I,J}\otimes a)U^*$ ), we obtain that \[X = U(S^p_J\otimes a)U^*.\] (There exists a unitary $u:\ell^2_J\to \ell^2_I$ such that $U = V(u\otimes \Id_K)$.) Hence we proved the result for $1\leq p\neq 2<\infty$.

For the case $p=2$, we heavily rely on \cite[Section 6]{B}. Fix $h\in X\subset S^2(H)$ a positive injective element, with $\|h\|_2 = 1$. The existence of $h$ is ensured by Proposition \ref{existence_h}. Then according to \cite[Proposition 6.2]{B}, the operator $V_1 : hB(H)h\to S^1(H)$ defined by \[V_1(hxh) = h^{1/2}P(h^{1/2}xh^{1/2})h^{1/2}\] is well defined and extends to a contractive and positive projection on $S^1(H)$. For every $n\in\Ndb$, and $y\in M_n(B(H))$, we have \[\left[\Id_{S^1_n}\otimes V_1\right]\left((I_n\otimes h)y(I_n\otimes h)\right) = (I_n\otimes h)^{1/2}[\Id_{S^2_n}\otimes P]\left((I_n\otimes h)^{1/2}y(I_n\otimes h)^{1/2}\right)(I_n\otimes h)^{1/2}.\] Since $P$ is completely positive, using the same argument as for the positivity of $V_1$, we obtain that $\Id_{S^1_n}\otimes V_1$ is positive, hence $V_1$ is completely positive. Applying \cite[Proposition 6.3]{B}, we define a positive contractive projection $V_p : S^p(H)\to S^p(H)$ for $1\leq p\leq 2$, such that \[V_p(h^{1/p}xh^{1/p}) = h^{\frac{1}{p}-\frac{1}{2}}P(h^{1/2}xh^{1/2})h^{\frac{1}{p}-\frac{1}{2}},\quad x\in B(H). \] By the same argument as for $V_1$, the map $V_p$  is completely positive. For $2\leq q\leq \infty$, set $V_q := V_p^*$, for $\frac{1}{p} + \frac{1}{q} =1$. It is a completely positive and contractive projection.
Fix $1<p<\infty$, and $2<q<\infty$ such that $\frac{1}{p}+\frac{1}{q}=1$. Set $X_p := \Ran(V_p)\subset S^p(H)$, $X_q := \Ran(V_q)\subset S^q(H)$. By \cite[Proposition 6.6]{B}, $X_p$ and $X_q$ are non-degenerate and indecomposable. By applying the present proposition in the case $q\neq 2$, we have $X_q = U(S^q_I\otimes a)U^*$, for $I$ an index set, $a\in S^q(K)$ a positive injective operator on a Hilbert space $K$, and $U : \ell^2_I(K)\to H$ a unitary operator. By \cite[Proposition 6.9]{B} with $J = B(H)$, we have \[X_p = U(S^p_I\otimes a^{q-1})U^*\quad \text{and}\quad X = U(S^2_I\otimes a^{q/2})U^*.\] Hence the result follows.
\end{proof}

\subsection{Description of contractively 1-pseudo decomposable projections}


\begin{proposition}\label{equiv_subs}Let $H,H',K,K'$ be Hilbert spaces, let $1\leq n\leq \infty$ and let $1\leq p<\infty$.  Let $X\subset S^p(K,H)$ and $Y\subset S^p(K',H')$ be two equivalent subspaces. Then $X$ is 1-complemented with a contractively $n$-pseudo decomposable projection if and only if $Y$ has the same property.
\end{proposition}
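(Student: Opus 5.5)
By symmetry of the relation $\sim$, it suffices to prove one implication. So assume $X\subset S^p(K,H)$ is the range of a contractively $n$-pseudo decomposable projection $P$. Fix partial isometries $U:H\to H'$ and $V:K\to K'$ with $X=U^*YV$ and $Y=UXV^*$. Following point (2) of Remark \ref{equiv_subspace_1compl}, the natural candidate is
\[Q : x\in S^p(K',H')\mapsto UP(U^*xV)V^*\in S^p(K',H').\]
The plan is to show three things in turn: $Q$ is a contractive projection onto $Y$, $Q$ is $n$-pseudo decomposable, and $\|Q\|_{n,\mathrm{dec}}\leq 1$.

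For the projection part, I would first use point (1) of Remark \ref{equiv_subspace_1compl} to choose $U,V$ with $U^*U=\operatorname{s}_\ell(X)$ and $V^*V=\operatorname{s}_r(X)$. Then every $z\in X$ satisfies $U^*Uz V^*V=z$, and this gives the following.
\begin{itemize}
\item $\Ran(Q)\subset UXV^*=Y$.
\item For $y=UzV^*\in Y$ with $z\in X$, we get $Q(y)=UP(U^*UzV^*V)V^*=UP(z)V^*=UzV^*=y$.
\end{itemize}
So $Q$ is a projection onto $Y$, and $\|Q\|\le\|U\|\|P\|\|V\|\le 1$. (This is exactly the content of point (2) of Remark \ref{equiv_subspace_1compl}, so it can be quoted.)

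For the decomposability estimate, write $Q=M_{U,V^*}\circ P\circ M_{U^*,V}$, where:
\begin{itemize}
\item $M_{U^*,V}:S^p(K',H')\to S^p(K,H)$ is $x\mapsto U^*xV$;
\item $M_{U,V^*}:S^p(K,H)\to S^p(K',H')$ is $x\mapsto UxV^*$.
\end{itemize}
By Lemma \ref{axb_dec}, both are decomposable with decomposable norm at most $\|U\|_\infty\|V\|_\infty\le 1$. Since $\|T\|_{n,\mathrm{dec}}\le\|T\|_{\mathrm{dec}}$ (the witnessing completely positive map is in particular $n$-positive), both are $n$-pseudo decomposable with $\|\cdot\|_{n,\mathrm{dec}}\le1$. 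Lemma \ref{composi_n_dec} then gives
\[\|Q\|_{n,\mathrm{dec}}\le \|M_{U,V^*}\|_{n,\mathrm{dec}}\,\|P\|_{n,\mathrm{dec}}\,\|M_{U^*,V}\|_{n,\mathrm{dec}}\le 1.\]
Hence $Y$ is 1-complemented by the contractively $n$-pseudo decomposable projection $Q$.

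The only delicate point I expect is the bookkeeping of the Hilbert spaces in Lemma \ref{axb_dec}: one has to check that the variance conventions $a\in B(H,H')$, $b\in B(K,K')$, acting $S^p(K',H)\to S^p(K,H')$, match the two multiplication maps above. Here $a=U^*$, $b=V$ for the first map, and $a=U$, $b=V^*$ for the second. Beyond that, the argument is a direct composition.
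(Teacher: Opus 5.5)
Your proposal is correct and is essentially the paper's own argument. You factor $Q=M_{U,V^*}\circ P\circ M_{U^*,V}$, bound the two multiplication maps by Lemma \ref{axb_dec} together with $\|\cdot\|_{n,\mathrm{dec}}\leq\|\cdot\|_{\mathrm{dec}}$, and conclude with Lemma \ref{composi_n_dec}; your explicit check that $Q$ is a contractive projection onto $Y$ is a harmless extra that works for any admissible $U,V$ (since $U^*UU^*=U^*$ and $VV^*V=V$ give $U^*UzV^*V=z$ for $z\in X=U^*YV$), and it avoids relying on uniqueness of the contractive projection onto $Y$, which matters only when $p=1$.
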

\begin{proof}
Assume that $X$ is 1-complemented, with $P$ the contractively $n$-pseudo decomposable projection onto it. We have two partial isometries $U:H\to H'$ and $V:K\to K'$ such that \[X = U^*YV\quad\text{ and }\quad Y=UXV^*.\] Denote by $Q : S^p(K',H')\to S^p(K',H')$ the contractive projection onto $Y$. Then, according to the point (1) of Remark \ref{passage_P_a_Q}, with notation of Lemma \ref{axb_dec}, we have \[Q = M_{U,V^*}\circ P\circ M_{U^*,V}.\]
Since $U$ and $V$ are contractive, we deduce by Lemma \ref{axb_dec} that $M_{U,V^*}$ and $M_{U^*,V}$ are contractively decomposable, hence contractively $n$-pseudo decomposable. Then, with Lemma \ref{composi_n_dec}, we obtain that $Q$ contractively $n$-pseudo decomposable.
\end{proof}

\begin{remark}\label{nondegen_subs}
Let $X$ be a subspace of $S^p(K,H)$. Let $H':=\Ran(\operatorname{s}_\ell(X))$, let $K':=\Ran(\operatorname{s}_r(X))$, and let $\operatorname{s}_\ell:H\to H'$ and $\operatorname{s}_r:K\to K'$ be the orthogonal projection onto $H'$ and $K'$ respectively. Then $X$ is equivalent to the subspace $\operatorname{s}_\ell X \operatorname{s}_r^* \subset S^p(K',H')$, which is non-degenerate. Then, according to Proposition \ref{equiv_subs}, we only need to study the non-degenerate subspaces that are the range of a contractively $n$-pseudo decomposable projection.
\end{remark}

The next proposition is the part $(3)\Rightarrow (2)$ of Theorem \ref{main2}.
\begin{proposition}
    Let $H,K$ be Hilbert spaces, let $1\leq p<\infty$, let $X$ be a subspace of $S^p(K,H)$.
   Assume that $X$ is equivalent to a subspace of the form \[\bigoplus_{\alpha\in A}^p S^p_{I_\alpha, J_\alpha}\otimes a_\alpha,\] where $A$ is a set,  $(I_\alpha)_{\alpha\in A}$ and $(J_\alpha)_{\alpha \in A}$ two families of indices, for every $\alpha\in A$, $a_\alpha \in S^p(K_\alpha)$ is a positive and injective operator on a Hilbert spaces $K_\alpha$. 
   Then $X$ is 1-complemented with a contractively decomposable projection.
\end{proposition}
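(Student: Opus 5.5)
By Proposition \ref{equiv_subs} (with $n=\infty$), being the range of a contractively decomposable projection is invariant under equivalence. It therefore suffices to treat the model space
\[Y := \bigoplus_{\alpha\in A}^p S^p_{I_\alpha,J_\alpha}\otimes a_\alpha \subset S^p(K_0,H_0),\qquad H_0:=\overset{2}{\bigoplus_\alpha}\ell^2_{I_\alpha}(K_\alpha),\quad K_0:=\overset{2}{\bigoplus_\alpha}\ell^2_{J_\alpha}(K_\alpha).\]
We normalise $\|a_\alpha\|_p=1$ and write $Y$ as a corner of a completely positively $1$-complemented space. The plan is to embed $S^p(K_0,H_0)$ in $S^p(H_0\overset{2}{\oplus}K_0)$ via $x\mapsto s_{H_0}^*xs_{K_0}$.

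For each $\alpha$ set $L_\alpha := I_\alpha\sqcup J_\alpha$, so that $\ell^2_{L_\alpha}(K_\alpha)=\ell^2_{I_\alpha}(K_\alpha)\overset{2}{\oplus}\ell^2_{J_\alpha}(K_\alpha)$. Consider
\[Z := \bigoplus_\alpha^p S^p_{L_\alpha}\otimes a_\alpha \subset S^p\Big(\overset{2}{\bigoplus_\alpha}\ell^2_{L_\alpha}(K_\alpha)\Big)\cong S^p(H_0\overset{2}{\oplus}K_0),\]
where the last identification is just a regrouping of the Hilbertian direct sum, i.e.\ a unitary conjugation. By Proposition \ref{projcp} (the "if" part) and Lemma \ref{dec_space}, $Z$ is the range of a contractive, completely positive projection. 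Concretely, I would take
\[\Phi := \bigoplus_\alpha \big(\Id_{S^p_{L_\alpha}}\otimes\varphi_{a_\alpha}\big)\circ \mathrm{diag}, \qquad \varphi_a(x)=\Tr(a^{p-1}x)a,\]
where $\mathrm{diag}$ is the contractive, completely positive compression onto the block diagonal $\bigoplus_\alpha$. Since $\Phi$ is a block-diagonal compression followed by a Schur-type map acting entrywise on the $L_\alpha\times L_\alpha$ matrix indices, it commutes with the compressions by the projections $e=s_{H_0}^*s_{H_0}$ and $1-e$. These projections are sums over $\alpha$ of the coordinate projections onto $I_\alpha$ and onto $J_\alpha$ respectively. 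Hence $\Phi$ has the block form
\[\Phi=\left[\begin{array}{cc} P_1 & P\\ P^\circ & P_2\end{array}\right],\]
where $P(x)=s_{H_0}\Phi(s_{H_0}^*xs_{K_0})s_{K_0}^*$ is the map $(x_\alpha)\mapsto (\Id\otimes\varphi_{a_\alpha})(x_{\alpha})$ on the diagonal $\alpha$-blocks of $S^p(K_0,H_0)$. The off-diagonal entry is genuinely $P^\circ$, because $\Phi$ is positive and hence adjoint preserving. The map $P$ is a projection with range exactly $Y$, since $e Z(1-e)$ corresponds to $\bigoplus_\alpha S^p_{I_\alpha,J_\alpha}\otimes a_\alpha$.

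Since $\Phi$ is completely positive, $P$ is decomposable in the sense of Definition \ref{dec_on_Sp}, and $\|P\|_{\mathrm{dec}}\le\max\{\|P_1\|,\|P_2\|\}$. Both $P_1$ and $P_2$ are compressions of the contraction $\Phi$, so they are contractive. Thus $P$ is a contractively decomposable projection onto $Y$, and Proposition \ref{equiv_subs} transfers the conclusion to $X$.

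The main point to check carefully is the bimodularity of $\Phi$ with respect to the diagonal projections $e$ and $1-e$, i.e.\ that $\Phi(exf)=e\Phi(x)f$ for $e,f\in\{e,1-e\}$. This is immediate from the explicit formula, since $\Id_{S^p_{L_\alpha}}\otimes\varphi_{a_\alpha}$ acts trivially on the scalar matrix indices. Beyond that, one should justify the contractivity of $\Phi$ on the $p$-direct sum using operator-disjointness of the blocks, as in the proof of Lemma \ref{dec_space}.
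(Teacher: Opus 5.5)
Your proposal is correct and follows essentially the same route as the paper: reduce to the model space via Proposition \ref{equiv_subs}, enlarge it to $\bigoplus_\alpha^p S^p_{I_\alpha\sqcup J_\alpha}\otimes a_\alpha$ on $H_0\oplus K_0$, use the completely positive contractive projection onto it (Lemma \ref{dec_space} and Proposition \ref{projcp}), and read off $P$ as the off-diagonal corner, with $P_1,P_2$ as the diagonal corners. Your explicit formula for $\Phi$ and the check that it is bimodular with respect to $e$ and $1-e$ justify the block decomposition that the paper only asserts. They also remove the need to invoke the Arazy--Friedman theorem and uniqueness of contractive projections, which is convenient for $p=1$.
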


\begin{proof}
According to Proposition \ref{equiv_subs} and Remark \ref{nondegen_subs}, we only need to consider \[X = \bigoplus_{\alpha\in A}^p \left(S^p_{I_\alpha,J_\alpha}\otimes a_\alpha\right)\subset S^p\left( \bigoplus^2_{\alpha \in A} \ell^2_{J_\alpha}(K_\alpha), \bigoplus^2_{\alpha \in A} \ell^2_{I_\alpha}(K_\alpha)\right).\]
In this case, we have $K = \underset{\alpha}{\overset{2}{\bigoplus}} \ell^2_{J_\alpha}(K_\alpha)$ and $H =\underset{\alpha}{\overset{2}{\bigoplus}} \ell^2_{I_\alpha}(K_\alpha).$
According to \cite[Main Theorem]{AF2}, it is a 1-complemented subspace. Denote by $P$ the contractive projection from $S^p(K,H)$ onto $X$.
Set \[\H := \left(\bigoplus^2_\alpha \ell^2_{I_\alpha}(K_\alpha)\right)\overset{2}{\oplus}\left(\bigoplus^2_\alpha \ell^2_{J_\alpha}(K_\alpha)\right) = \bigoplus^2_\alpha \left(\ell^2(K_\alpha)\overset{2}{\oplus}\ell^2_{J_\alpha}(K_\alpha)\right) = \bigoplus^2_\alpha \ell^2_{I_\alpha\sqcup
 J_\alpha}(K_\alpha), \] where $I_\alpha\sqcup J_\alpha$ denotes the index set resulting from the concatenation of $I_\alpha$ and $J_\alpha$. Denote by\[Q : S^p(\H)\to S^p(\H)\] the contractive projection onto \(\underset{\alpha}{\overset{p}{\bigoplus}} S^p_{I_\alpha\sqcup J_\alpha}\otimes a_\alpha.\)
Combining Lemma \ref{dec_space} and Proposition \ref{projcp}, we obtain that $Q$ is completely positive. Using the decomposition \(\H = H\overset{2}{\oplus}K\), we can write $Q$ as \[Q = \left[\begin{array}{cc}
    P_1 & P \\
    P^\circ & P_2
\end{array}\right],\] where $$P_1 : S^p\left(\bigoplus^2_\alpha \ell^2_{I_\alpha}(K_\alpha)\right)\to S^p\left(\bigoplus^2_\alpha \ell^2_{I_\alpha}(K_\alpha)\right)$$ is the contractive projection onto \(\underset{\alpha}{\overset{p}{\bigoplus}} S^p_{I_\alpha}\otimes a_\alpha\), and $$P_2 : S^p\left(\bigoplus^2_\alpha \ell^2_{J_\alpha}(K_\alpha)\right)\to S^p\left(\bigoplus^2_\alpha \ell^2_{J_\alpha}(K_\alpha)\right)$$ is the contractive projection onto \(\underset{\alpha}{\overset{p}{\bigoplus}} S^p_{J_\alpha}\otimes a_\alpha\). We deduce that $P$ is contractively decomposable.
\end{proof}

The remainder of this subsection is devoted to the part $(1)\Rightarrow(3)$ of Theorem \ref{main2}. The idea is to consider $P:S^p(K,H)\to S^p(K,H)$ a contractively 1-pseudo decomposable projection, then to obtain a contractive and positive projection $\Phi$ that could be written as $$\Phi := \left[\begin{array}{cc}
    P_1 & P \\
    P^\circ & P_2
\end{array}\right] : S^p(H\overset{2}{\oplus}K)\to S^p(H\overset{2}{\oplus}K)$$ with $P_1 : S^p(H)\to S^p(H)$, $P_2:S^p(K)\to S^p(K)$ some non-zero positive and contractive projections. We say that $\Phi$ is a \textit{$M_2$-splitting} positive contractive projection. Then, using Theorem \ref{main_B}, we have the description of the range of a positive contractive projection on $S^p(H\overset{2}{\oplus}K)$. Adding the assumption to be $M_2$-splitting, we show that, in this case, the range of such a projection is of the form $U\left(\underset{\alpha}{\overset{p}{\bigoplus}}S^p_{I_\alpha}\otimes a_\alpha\right)U^*$, with $a_\alpha\in S^p(H_\alpha)$ a positive injective operator on a Hilbert space $H_\alpha$, an index set $I_\alpha$, for every $\alpha$, and a partial isometry $U : \underset{\alpha}{\overset{2}{\oplus}}\ell^2_{I_\alpha}(H_\alpha)\to H\overset{2}{\oplus}K$. This implies that $\Phi$ is completely positive, hence $P$ is contractively decomposable. Then, we deduce the description of its range as stated in Theorem \ref{main2}.

The following is a reformulation of Propositions \ref{Phi_proj} and \ref{phi_contr} in the setting of Schatten spaces $S^p(K,H)$. 
\begin{lemma}\label{phi_proj_Sp}Let $1<p<\infty$, let $H,K$ be Hilbert spaces.
Let $P:S^p(K,H)\to S^p(K,H)$ be a contractively 1-pseudo decomposable projection, with a non-degenerate range. Then, there exist positive contractive projections $P_1 : S^p(H)\to S^p(H)$, $P_2:S^p(K)\to S^p(K)$ with non-degenerate ranges such that the linear map $$\Phi := \left[\begin{array}{cc}
    P_1 & P \\
    P^\circ & P_2
\end{array}\right] : S^p(H\overset{2}{\oplus}K)\to S^p(H\overset{2}{\oplus}K)$$ is a contractive and positive projection.
\end{lemma}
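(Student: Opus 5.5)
The plan is to transfer the problem to the square setting via Lemma \ref{T-T_tilde} and then to apply Propositions \ref{Phi_proj} and \ref{phi_contr_Lp}. Two points need care: recovering the block form on $S^p(H\overset{2}{\oplus}K)$ rather than on $S^p_2(S^p(H\overset{2}{\oplus}K))$, and the non-degeneracy of $P_1$ and $P_2$.

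\textbf{Step 1.} Since $P$ is contractively 1-pseudo decomposable, Lemma \ref{T-T_tilde} shows that $\tilde P$ is a contractively 1-pseudo decomposable map on $S^p(H\overset{2}{\oplus}K)$. By Remark \ref{appli_cb}, it is also a projection. Now view $S^p(H\overset{2}{\oplus}K)$ as the Haagerup space $L^p(B(H\overset{2}{\oplus}K),\varphi)$ for a faithful normal state $\varphi$; this is possible because the Hilbert spaces are separable, and Proposition \ref{change_of_weight} lets us identify it completely order isometrically with the trace picture. Proposition \ref{Phi_proj} then gives positive contractive projections $v_1,v_2$ on $S^p(H\overset{2}{\oplus}K)$ such that $\left[\begin{smallmatrix} v_1 & \tilde P\\ \tilde P^\circ & v_2\end{smallmatrix}\right]$ is a positive projection. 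Proposition \ref{phi_contr_Lp} (with $n=1$) shows this map is contractive.

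\textbf{Step 2.} Compress this map as in the proof of Lemma \ref{T-T_tilde}. Set
\[
P_1(x)=s_Hv_1(s_H^*xs_H)s_H^*,\qquad P_2(x)=s_Kv_2(s_K^*xs_K)s_K^*,
\]
and $\Phi(x)=S\,[\,\cdot\,](S^*xS)\,S^*$. Then $\Phi=\left[\begin{smallmatrix} P_1 & P\\ P^\circ & P_2\end{smallmatrix}\right]$ is positive and contractive, being a composition of contractive positive maps. What remains is to show that $\Phi$ is idempotent. This does not follow formally, since compression need not preserve idempotence. The cleanest way to handle it is to avoid the detour through $\tilde P$ and redo the argument of Proposition \ref{Phi_proj} directly in $S^p(H\overset{2}{\oplus}K)$ with the block structure given by Definition \ref{dec_on_Sp}.

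That argument runs as follows. Take $u_1,u_2$ attaining $\|P\|_{1,\mathrm{dec}}\le 1$. Pass to Cesàro or weak$^*$ cluster points of the averages of powers $\Phi_0^k$; this is allowed because $\Phi_0^k=\left[\begin{smallmatrix} u_1^k & P\\ P^\circ & u_2^k\end{smallmatrix}\right]$, as $P^2=P$. Reflexivity of $S^p$ for $1<p<\infty$ then yields a positive projection of the same block form whose diagonal entries are positive contractive projections. Contractivity of the whole $\Phi$ comes from repeating the proof of Proposition \ref{phi_contr_Lp} verbatim, with $M_2(\M)$ replaced by $B(H\overset{2}{\oplus}K)$ and $e=\operatorname{s}(P_1)\oplus\operatorname{s}(P_2)$. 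That proof only uses the lifting theorem, Proposition \ref{existence_h}, and Theorem \ref{HJX}, all of which hold with $n=1$.

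\textbf{Step 3: non-degeneracy.} By Remark \ref{ran(P)nondegenimpliesran(Phi)nondegen}(1), adapted to this block setting, $\Phi(exe)=\Phi(x)$ together with strict convexity of $S^p$ gives $\operatorname{s}_\ell(P)\le\operatorname{s}(P_1)$ and $\operatorname{s}_r(P)\le\operatorname{s}(P_2)$. Since $\Ran(P)$ is non-degenerate, both supports equal the identity. Hence $P_1$ and $P_2$ have non-degenerate ranges; in particular they are nonzero.

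The main obstacle is Step 2: obtaining a block projection whose diagonal entries are themselves projections, directly in the rectangular setting. This is exactly the content of \cite[Proposition 7.1, Lemma 7.4]{A_dec}, and the work lies in checking that its averaging argument is insensitive to the shapes of $H$ and $K$.
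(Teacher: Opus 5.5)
Your existence step and your Step 3 match the paper. The existence step builds the diagonal projections directly on $S^p(H\overset{2}{\oplus}K)$ by averaging the powers $\left[\begin{array}{cc} u_1^k & P\\ P^\circ & u_2^k\end{array}\right]$. The paper likewise adapts \cite[Proposition 7.1]{A_dec} to the rectangular block form and relies on Remark \ref{ran(P)nondegenimpliesran(Phi)nondegen} for non-degeneracy. The two proofs differ only in how contractivity is obtained. You rerun the proof of Proposition \ref{phi_contr_Lp} on $B(H\overset{2}{\oplus}K)$. The paper instead lifts, as in the converse half of Lemma \ref{T-T_tilde}. The lift $\Phi'(x):=S^*\Phi(SxS^*)S$ is a positive projection on $S^p_2(S^p(H\overset{2}{\oplus}K))$ of the form $\left[\begin{array}{cc} v_1 & \tilde P\\ \tilde P^\circ & v_2\end{array}\right]$, with $v_1=s_H^*P_1(s_H\cdot s_H^*)s_H$ and $v_2=s_K^*P_2(s_K\cdot s_K^*)s_K$ positive contractive projections. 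In this direction idempotence does survive, because $SS^*=\Id_{H\overset{2}{\oplus}K}$ and $s_Hs_H^*=\Id_H$. So Proposition \ref{phi_contr_Lp} applies as a black box with $\M=B(H\overset{2}{\oplus}K)$, and $\Phi=S\Phi'(S^*\cdot S)S^*$ is contractive.

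Your route also works, but it costs more than you say. Besides the lifting theorem, Proposition \ref{existence_h} and Theorem \ref{HJX}, that proof also uses:
$\Phi(exe)=\Phi(x)$ (\cite[Lemma 7.4]{A_dec});
the fact that $Q_{|\N}$ is a faithful projection;
$\Phi^*(H^{p-1})=H^{p-1}$ and $\psi\circ Q=\psi$ (\cite[Lemmas 2.5, 2.6, 7.6--7.8]{A_dec}).
Each of these must be re-checked for block maps on $S^p(H\overset{2}{\oplus}K)$. The identity $\Phi(exe)=\Phi(x)$ is also used without proof in your Step 3.

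Two shortcuts are available. First, your abandoned Step 1 can be completed. The compression of the contractive square projection is a positive contraction $\Psi=\left[\begin{array}{cc} w_1 & P\\ P^\circ & w_2\end{array}\right]$, and $\Psi^k=\left[\begin{array}{cc} w_1^k & P\\ P^\circ & w_2^k\end{array}\right]$ because $P^2=P$. By the mean ergodic theorem in the reflexive space $S^p(H\overset{2}{\oplus}K)$, its Ces\`aro means converge strongly to a positive \emph{contractive} projection of the required form, with no need to rerun Proposition \ref{phi_contr_Lp}. Second, non-degeneracy follows without $\Phi(exe)=\Phi(x)$. Apply $\Phi$ to $\left[\begin{array}{cc} |z^*| & z\\ z^* & |z|\end{array}\right]\geq 0$ for $z\in\Ran(P)$. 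This gives $\left[\begin{array}{cc} P_1(|z^*|) & z\\ z^* & P_2(|z|)\end{array}\right]\geq 0$, hence $\operatorname{s}_\ell(z)\leq\operatorname{s}(P_1(|z^*|))\leq\operatorname{s}(P_1)$, and symmetrically $\operatorname{s}_r(z)\leq\operatorname{s}(P_2)$.
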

\begin{proof}
The proof of the existence of $P_1$ and $P_2$ is obtained with an easy adaptation of the proof of \cite[Proposition 7.1]{A_dec}. 

To show that $\Phi$ is contractive, it suffices to apply Proposition \ref{phi_contr} with the von Neumann algebra $\M= B(H\overset{2}{\oplus}K)$ and to use the arguments in the proof of Lemma \ref{T-T_tilde}.
\end{proof}

\begin{lemma}\label{corner_preserv}
Let $\H$ be a Hilbert space, let $1<p<\infty$, and let $\Phi : S^p(\H)\to S^p(\H)$ be a contractive and positive projection. Then the following two assertions are equivalent :
\begin{enumerate}
    \item There exist a non-trivial orthogonal decomposition of $\H$, $\H = H\overset{2}{\oplus}K$ and non-zero contractive projections $P_1 : S^p(H)\to S^p(H)$, $P_2:S^p(K)\to S^p(K)$, $P : S^p(K,H)\to S^p(K,H)$ such that $\Phi$ may be expressed as \[\Phi = \left[\begin{array}{cc}
    P_1 & P \\
    P^\circ & P_2
\end{array}\right] : S^p(H\overset{2}{\oplus}K)\to S^p(H\overset{2}{\oplus}K).\]
\item There exists an orthogonal projection $p\in B(\H)$ such that $p\neq 0,\Id_\mathcal{H}$ and $\Phi\circ L_p = L_p\circ \Phi$, where $L_p : x\mapsto px$.
\end{enumerate}
In this case, if the subspace $\Ran(P)\subset S^p(K,H)$ is non-degenerate, then $\Ran(P_1)\subset S^p(H)$ and $\Ran(P_2)\subset S^p(K)$ are also non-degenerate.
\end{lemma}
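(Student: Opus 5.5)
The plan is to prove the two implications separately, using that the positive map $\Phi$ is adjoint preserving. For $(1)\Rightarrow(2)$, I would take $p := s_H^*s_H$, the orthogonal projection of $\H$ onto $H$. Since the decomposition is non-trivial, $p\neq 0,\Id_\H$. For $x=\left[\begin{array}{cc} x_1 & x_2\\ x_3 & x_4\end{array}\right]$ we have $px=\left[\begin{array}{cc} x_1 & x_2\\ 0 & 0\end{array}\right]$. The block form of $\Phi$ acts entrywise, so
\[\Phi(px)=\left[\begin{array}{cc} P_1(x_1) & P(x_2)\\ 0 & 0\end{array}\right]=p\,\Phi(x).\]
This gives $\Phi\circ L_p=L_p\circ\Phi$.

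For $(2)\Rightarrow(1)$, the first step is to transfer the commutation to the right side. Since $\Phi$ is positive, it is adjoint preserving, so for every $x$,
\[\Phi(xp)=\Phi\big((px^*)^*\big)=\Phi(px^*)^*=\big(p\Phi(x^*)\big)^*=\Phi(x)p.\]
Also $L_{1-p}=\Id-L_p$, so $\Phi$ commutes with left and right multiplication by both $p$ and $1-p$. Hence
\[\Phi(qxr)=q\Phi(x)r\quad\text{for } q,r\in\{p,1-p\}.\]
Setting $H:=\Ran(p)$ and $K:=\Ran(1-p)$, this says $\Phi$ maps each of the four corners into itself. So $\Phi$ has the form $\left[\begin{array}{cc} T_{11} & T_{12}\\ T_{21} & T_{22}\end{array}\right]$, and we set $P_1:=T_{11}$, $P:=T_{12}$, $P_2:=T_{22}$.

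Each $T_{ij}$ is a projection, because $\Phi^2=\Phi$ blockwise. Each is contractive, since the corner embeddings $x\mapsto s_H^*xs_K$ and the compressions are contractive and $T_{12}(x)=s_H\Phi(s_H^*xs_K)s_K^*$. The adjoint-preserving property gives $T_{21}=T_{12}^\circ$. Non-vanishing of the blocks is the point I expect to need care with. If, say, $P_1=0$, then for $y\geq 0$ the positive operator $\Phi(y)$ has zero $(1,1)$-corner. A positive $2\times 2$ block operator with a zero diagonal corner has zero off-diagonal corners, so $\Phi$ would live in the $(2,2)$-corner and $P=0$. I would therefore argue that, as soon as $\Phi$ acts non-trivially on the off-diagonal corner (which is the situation used later, with $\Ran(P)$ non-degenerate), all blocks are non-zero. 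Otherwise one must adjust $p$, and I would treat this as the delicate step.

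For the final assertion, let $x\in\Ran(P)$. The matrix $\left[\begin{array}{cc} |x^*| & x\\ x^* & |x|\end{array}\right]$ is positive in $S^p(H\overset{2}{\oplus}K)$. Its image under $\Phi$ is
\[\left[\begin{array}{cc} P_1(|x^*|) & x\\ x^* & P_2(|x|)\end{array}\right],\]
which is again positive. By the standard factorization of positive block operators, $x=P_1(|x^*|)^{1/2}\,w\,P_2(|x|)^{1/2}$ for some contraction $w$. Hence $\operatorname{s}_\ell(x)\leq\operatorname{s}(P_1)$ and $\operatorname{s}_r(x)\leq\operatorname{s}(P_2)$. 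Taking suprema over $x\in\Ran(P)$ and using non-degeneracy gives $\operatorname{s}(P_1)=1_H$ and $\operatorname{s}(P_2)=1_K$.
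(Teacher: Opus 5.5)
\textbf{Gap in $(2)\Rightarrow(1)$.} Your proof of $(1)\Rightarrow(2)$ matches the paper's. So does the block decomposition in $(2)\Rightarrow(1)$: right commutation via adjoint preservation, compressions, and $T_{21}=T_{12}^\circ$. The step you defer, non-vanishing of the blocks, is a genuine gap. It cannot be closed by adjusting $p$, because $(2)\Rightarrow(1)$ is false as stated.

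Take $\mathcal{H}=\mathbb{C}^2$, $\Phi(x)=\mathrm{diag}(x_{11},x_{22})$ and the projection $e_{11}$. Then $\Phi$ is a non-zero positive contractive projection with non-degenerate range, and $\Phi(e_{11}x)=e_{11}\Phi(x)$, so (2) holds. Now take any splitting of $\mathbb{C}^2$ into two lines, with $q$ the projection onto the first. A non-zero block $P$ would produce a non-zero element $y=qy(\Id_{\mathcal{H}}-q)\in\Ran(\Phi)$. Such a $y$ is nilpotent, while $\Ran(\Phi)$ consists of diagonal matrices, so no such $y$ exists and (1) fails. For $e_{11}$ itself one gets $P_1,P_2\neq 0$ and $P=0$, which is exactly the case your observation isolates. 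The paper's proof does not address the requirement that the blocks be non-zero at all.

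Your remark that $P\neq 0$ forces $P_1,P_2\neq 0$ is the right repair. Statement (2) has to be strengthened in one of two ways:
\begin{enumerate}
\item assume $p\Ran(\Phi)(\Id_{\mathcal{H}}-p)\neq\{0\}$; or
\item assume $\Ran(\Phi)$ is non-degenerate and indecomposable, which is the situation of each $X_\alpha$ where the lemma is applied in Lemma~\ref{phi_diago_de_cornerpreserving}.
\end{enumerate}
In the second case, $P=0$ would write $\Ran(\Phi)$ as the sum of the operator-disjoint corner subspaces $\Ran(P_1)$ and $\Ran(P_2)$. By indecomposability one of them vanishes. Then $\operatorname{s}(\Phi)\leq p$ or $\operatorname{s}(\Phi)\leq \Id_{\mathcal{H}}-p$, which contradicts non-degeneracy since $p\neq 0,\Id_{\mathcal{H}}$.

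\textbf{Final assertion.} Here your route differs from the paper's and is more self-contained. The paper appeals to Remark~\ref{ran(P)nondegenimpliesran(Phi)nondegen}, whose argument uses strict convexity together with the identity $\Phi(exe)=\Phi(x)$, where $e=\mathrm{diag}(\operatorname{s}(P_1),\operatorname{s}(P_2))$. That identity was established only for the specific $\Phi$ of Proposition~\ref{Phi_proj}. Your argument applies $\Phi$ to the positive matrix with entries $|x^*|,x,x^*,|x|$ and factors the off-diagonal entry of a positive block operator as $A^{1/2}WC^{1/2}$. It needs only positivity and the block form, so it works for any $\Phi$ as in the lemma.
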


\begin{definition}Let $1<p<\infty$, and let $\H$ be a Hilbert space.
A contractive projection $\Phi : S^p(\H)\to S^p(\H)$ that satisfies one the two equivalent assertions of Lemma \ref{corner_preserv} is called $M_2$-splitting.
\end{definition}
\begin{proof}[Proof of Lemma \ref{corner_preserv}]
For the implication $(1)\Rightarrow (2)$, let $p\in B(\H)$ denote the orthogonal projection onto $H$. Then we can write $p = \left[\begin{array}{cc}
    \Id_H & 0 \\
    0 & 0
\end{array}\right] : B(H\overset{2}{\oplus}K)\to B(H\overset{2}{\oplus}K)$. We obtain, for $x = \left[\begin{array}{cc}
    x_1& x_2 \\
    x_3 & x_4
\end{array}\right] \in S^p(H\overset{2}{\oplus}K)$, \[L_p\circ\Phi(x) = \left[\begin{array}{cc}
    1& 0 \\
    0 & 0
\end{array}\right]\left[\begin{array}{cc}
    P_1(x_1)& P(x_2) \\
    P^\circ(x_3) & P_2(x_4)
\end{array}\right] = \left[\begin{array}{cc}
    P_1(x_1)& P(x_2) \\
    0 & 0
\end{array}\right]\] and \[\Phi\circ L_p(x) = \Phi\left(\left[\begin{array}{cc}
    x_1& x_2 \\
    0 & 0
\end{array}\right]\right) = \left[\begin{array}{cc}
    P_1(x_1)& P(x_2) \\
    0 & 0
\end{array}\right] = L_p\circ\Phi(x).\]

Conversely, assume that we have $0<p< \Id_\mathcal{H}$ an orthogonal projection such that \begin{equation}\label{calc1}
    \Phi\circ L_p = L_p\circ \Phi.
\end{equation} Then we also have \begin{equation}\label{calc2}
    \Phi\circ L_{\Id_\mathcal{H}-p} = L_{\Id_\mathcal{H}-p}\circ \Phi.
\end{equation} Since $\Phi$ is positive, it is adjoint preserving, we deduce that \begin{equation}\label{calc3}
     \Phi\circ R_p = R_p\circ \Phi \quad \text{and}\quad \Phi\circ R_{\Id_\mathcal{H}-p} = R_{\Id_\mathcal{H}-p}\circ \Phi,
\end{equation} where $R_p: x \mapsto xp$. Then, set $H := \Ran(p)$ and $K := H^\perp$, and denote by $s_1 : \H\to H$ and $s_2 : \H\to K$ the orthogonal projections onto $H$ and $K$ respectively, so that $p=s_1^*s_1$ and $\Id_\mathcal{H}-p = s_2^*s_2$. Define $P_1 := s_1\Phi(s_1^*\cdot s_1)s_1^* : S^p(H)\to S^p(H)$, $P_2 := s_2\Phi(s_2^*\cdot s_2)s_2^* : S^p(K)\to S^p(K)$ and $P := s_1\Phi(s_1^*\cdot s_2)s_2^*$.
Using (\ref{calc1}), (\ref{calc2}) and (\ref{calc3}), it is easy to show that $P$, $P_1$ and $P_2$ are contractive projections. Moreover, $\Phi = \left[\begin{array}{cc}
    P_1 & P \\
    P^\circ & P_2
\end{array}\right]$. Indeed, if $X = \left[\begin{array}{cc}
    x & 0 \\
    0 & 0
\end{array}\right]$, then $pX=X$, hence, by (\ref{calc1}) $p\Phi(X) = \Phi(X)$. It follows that $s_2\Phi(X)=0$, and therefore $s_2\Phi(X)s_1^*=0$. Similarly, using (\ref{calc3}), we obtain $s_1\Phi(X)s_2^*=0$ and $s_2\Phi(X)s_2^*=0$. Therefore $\Phi(X) = \left[\begin{array}{cc}
    P_1(x) & 0 \\
    0 & 0
\end{array}\right]$. The argument is similar for the other corners, hence for $X = \left[\begin{array}{cc}
    x_1 & x_2 \\
    x_3 & x_3
\end{array}\right]$, we have $\Phi(X) = \left[\begin{array}{cc}
    P_1(x_1) & P(x_2) \\
    P^\circ(x_3) & P_2(x_3)
\end{array}\right] $.

Furthermore, if $\Phi = \left[\begin{array}{cc}
    P_1 & P \\
    P^\circ & P_2
\end{array}\right]$ is a positive projection, then we have $s_\ell(P)\leq \operatorname{s}(P_1)$ and $s_r(P)\leq \operatorname{s}(P_2)$ as in Remark \ref{ran(P)nondegenimpliesran(Phi)nondegen}. If $\Ran(P)\subset S^p(K,H)$ is non-degenerate, we obtain $\operatorname{s}(P_1)=\Id_H$ and $\operatorname{s}(P_2)=\Id_K$.

\end{proof}

\begin{lemma}\label{phi_diago_de_cornerpreserving}Let $\H$ be a Hilbert space, let $1<p<\infty$, and let $\Phi : S^p(\H)\to S^p(\H)$ be a contractive and positive projection. We assume that $\Phi$ is $M_2$-splitting, that is, with notation of Lemma \ref{corner_preserv}, \[\Phi = \left[\begin{array}{cc}
    P_1 & P \\
    P^\circ & P_2
\end{array}\right] : S^p(H\overset{2}{\oplus}K)\to S^p(H\overset{2}{\oplus}K),\] with $\H = H\overset{2}{\oplus}K$ and non-zero contractive projections $P_1 : S^p(H)\to S^p(H)$, $P_2:S^p(K)\to S^p(K)$, $P : S^p(K,H)\to S^p(K,H)$. We assume, in addition that $\Ran(P)\subset S^p(K,H)$ is non-degenerate.
If the range of $\Phi$ may be written as a sum of indecomposable, non-trivial and pairwise operator-disjoint subspaces $X_\alpha$, that is $$\Ran(\Phi) = \bigoplus^p_{\alpha\in A} X_\alpha,$$ then each $X_\alpha$ is positively 1-complemented with a $M_2$-splitting contractive projection.
\end{lemma}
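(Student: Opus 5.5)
The natural tool is the restricted projection $\Phi_\alpha := L_{e_\alpha}R_{e_\alpha}\circ\Phi$, where $e_\alpha := \operatorname{s}(X_\alpha)$ is the support projection of the summand. The plan is to show that this gives a positive contractive projection onto $X_\alpha$, and then that it is again $M_2$-splitting with respect to a compressed decomposition of $e_\alpha\H$.

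\textbf{Step 1: the projection $\Phi_\alpha$.} Since $\Ran(\Phi)$ is $*$-invariant and the $X_\alpha$ are pairwise operator-disjoint, each $X_\alpha$ is $*$-invariant. Indeed, $x\mapsto x^*$ preserves disjointness, and by uniqueness of the indecomposable decomposition (\ref{dec_X}) it permutes the summands; a summand cannot be sent to a different one, since $x$ and $x^*$ are never disjoint unless $x=0$.

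So $X_\alpha$ has a single support projection $e_\alpha$, and these projections are pairwise orthogonal. As in \cite[Lemma 4.4]{B}, the map $\Phi_\alpha$ is a positive contractive projection onto $X_\alpha$. To see that $\Phi_\alpha$ is a projection, one uses that $\Phi(e_\alpha y e_\alpha)\in X_\alpha$, which is exactly the content of that lemma. I take this from \cite[Lemma 4.4]{B}, as the paper does before Lemma \ref{dec_space}.

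\textbf{Step 2: $e_\alpha$ commutes with $p$.} Let $p$ be the projection onto $H$. From Lemma \ref{corner_preserv} we know that $\Phi$ commutes with $L_p$ and $R_p$. Hence for $x\in X_\alpha$ the elements $px$, $xp$ and $pxp$ lie in $\Ran(\Phi)$. We need them to lie in $X_\alpha$.

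Write $px=\sum_\beta y_\beta$ with $y_\beta\in X_\beta$. Multiplying on the right by $e_\beta$ gives $y_\beta = pxe_\beta$, and this vanishes for $\beta\neq\alpha$ because $x=xe_\alpha$. So $pX_\alpha\subset X_\alpha$, and similarly $X_\alpha p\subset X_\alpha$.

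It follows that the support of $pX_\alpha$ lies under $e_\alpha$. Moreover, $p$ restricted to $\overline{\Ran}(X_\alpha)$ maps into the range of $X_\alpha$, so $pe_\alpha = e_\alpha p e_\alpha$. Taking adjoints gives $e_\alpha p = pe_\alpha$.

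\textbf{Step 3: the compressed splitting.} Set $p_\alpha := pe_\alpha$, a projection in $B(e_\alpha\H)$. Using $pe_\alpha = e_\alpha p$, one checks that $\Phi_\alpha$ commutes with $L_{p_\alpha}$ on $S^p(e_\alpha\H)$.

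By Lemma \ref{corner_preserv}, it then remains to check that $p_\alpha\neq 0, e_\alpha$. This is where non-degeneracy is used, and I expect it to be the main obstacle. Suppose $p_\alpha=0$. Then every $x\in X_\alpha$ satisfies $x=(1-p)x(1-p)$.

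Non-degeneracy of $\Ran(P)$ gives $\operatorname{s}(P_1)=\Id_H$ and $\operatorname{s}(P_2)=\Id_K$, via the last part of Lemma \ref{corner_preserv}. Take $h$ positive with maximal support in $\Ran(P_1)$, and consider $\left[\begin{smallmatrix}0&z\\ z^*&0\end{smallmatrix}\right]\in\Ran(\Phi)$ for $z\in\Ran(P)$. Its component in $X_\alpha$ is $e_\alpha(\cdot)e_\alpha$, which must be supported in $K$. Yet it has only off-diagonal $H$–$K$ entries, so it vanishes. Hence $z e_\alpha=0$ for all $z\in\Ran(P)$.

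Since $\Ran(P)$ has full right support $\Id_K$, and $e_\alpha\le 1-p$ is a subprojection of $K$, this forces $e_\alpha=0$, which is a contradiction. The case $p_\alpha = e_\alpha$ is symmetric, using full left support.

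Finally, the sub-blocks of $\Phi_\alpha$ are non-zero: the off-diagonal block is non-zero by the same support argument, and the diagonal blocks are non-zero because of non-degeneracy. This gives the $M_2$-splitting.
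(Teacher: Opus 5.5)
Your Step 1 justifies the $*$-invariance of the summands with a false claim. It is not true that $x$ and $x^*$ can be disjoint only when $x=0$.

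\textbf{The error.} By definition, $x$ and $x^*$ are disjoint exactly when $x^2=0$, for instance the matrix unit $x=e_{12}\in S^p_2$. Worse, the conclusion cannot follow from $*$-invariance of $\Ran(\Phi)$ and operator-disjointness alone. The space $X=\Cdb e_{12}\oplus\Cdb e_{21}\subset S^p_2$ is $*$-invariant, its two summands are indecomposable and operator-disjoint, and the adjoint exchanges them. This $X$ contains no non-zero positive element, so positivity of $\Phi$ is what must rule this situation out.

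\textbf{The repair.} You have two easy options:
\begin{enumerate}
\item Read the $*$-invariance of each $X_\alpha$ off \cite[Lemma 4.4]{B}, which you and the paper already invoke. Each $X_\alpha$ is the range of a positive, hence adjoint-preserving, projection.
\item Argue directly with a positive $h_0\in\Ran(\Phi)$ satisfying $\operatorname{s}(h_0)=\operatorname{s}(\Phi)$ (Proposition \ref{existence_h}). Suppose $X_\alpha^*=X_\beta$ with $\beta\neq\alpha$. Then the left support $l_\alpha$ and the right support $r_\alpha=l_\beta$ of $X_\alpha$ are orthogonal. The $X_\alpha$-component of $h_0$ is $l_\alpha h_0=h_0r_\alpha$, so $l_\alpha h_0l_\alpha=h_0r_\alpha l_\alpha=0$. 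Hence $h_0l_\alpha=0$, contradicting $0\neq l_\alpha\le\operatorname{s}(h_0)$.
\end{enumerate}

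\textbf{The rest of your argument.} With that fixed, your proof is correct and is essentially the paper's. You compress $\Phi$ to $e_\alpha$, show the compression commutes with $L_{pe_\alpha}$, use non-degeneracy of $\Ran(P)$ to exclude $pe_\alpha\in\{0,e_\alpha\}$, and conclude by Lemma \ref{corner_preserv}. Your variations are sound and slightly more direct:
\begin{enumerate}
\item You get $pe_\alpha=e_\alpha p$ from $pX_\alpha\subset X_\alpha$. The paper instead takes $p_\alpha$ to be the support of the $\alpha$-component of $\left[\begin{array}{cc} h&0\\ 0&0\end{array}\right]$.
\item You exclude the degenerate cases through $ze_\alpha=0$ for all $z\in\Ran(P)$, rather than through the paper's auxiliary projection $q$.
\item You never need $\sum_\alpha e_\alpha=\Id_{\H}$.
\end{enumerate}

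In the degenerate-case step, state explicitly that $xe_\alpha=e_\alpha xe_\alpha$ for every $x\in\Ran(\Phi)$, by orthogonality of the supports. That is what turns ``the $X_\alpha$-component vanishes'' into $ze_\alpha=0$. Your final check that the blocks of $\Phi_\alpha$ are non-zero is unnecessary, since assertion (2) of Lemma \ref{corner_preserv} already defines $M_2$-splitting.
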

\begin{proof}Note that if the set $A$ is a singleton, the result is immediate. Thus, we assume that $A$ contains at least two elements.  According to \cite[Lemma 4.4]{B}, each $X_\alpha$ is positively 1-complemented.
Let $\H_\alpha := \Ran(\operatorname{s}(X_\alpha))$, and denote by $\Phi_\alpha : S^p(\H_\alpha)\to S^p(\H_\alpha)$ the contractive projection onto $X_\alpha \subset S^p(\H_\alpha)$. Note that the subspaces $\H_\alpha\subset \H$ are pairwise orthogonal. Then, if $x = (x_\alpha)_\alpha\in \overset{p}{\underset{\alpha}{\bigoplus}} S^p(\H_\alpha)$, \[\Phi(x) = (\Phi_\alpha(x_\alpha))_\alpha.\]
According to Lemma \ref{corner_preserv}, the subspaces $\Ran(P_1)\subset S^p(H)$ and $\Ran(P_2)\subset S^p(K)$ are non-degenerate. Applying Proposition \ref{existence_h}, there exists a positive element $h\in\Ran(P_1)$ such that $\operatorname{s}(h)=\Id_H$. Then, the positive element $x := \left[\begin{array}{cc}
    h & 0 \\
     0& 0
\end{array}\right]$ is in the range of $\Phi$, and $\operatorname{s}(x) = p_H$, the orthogonal projection onto $H$. Since $x\in \Ran(\Phi)$, we can write $x$ as \[x = (x_\alpha)_\alpha,\quad x_\alpha\in X_\alpha,\] and $p_H = \operatorname{s}(x) = (\operatorname{s}(x_\alpha))_\alpha$. For each $\alpha$ let $p_\alpha = \operatorname{s}(x_\alpha)\in B(\H_\alpha)$. Then \[0<p_\alpha < \Id_{\mathcal{H}_\alpha}.\] Indeed, we have the contractive projection $\tilde{P}:=\left[\begin{array}{cc}
    0 & P \\
    0 & 0
\end{array}\right] : S^p(H\overset{2}{\oplus}K)\to S^p(H\overset{2}{\oplus}K)$ with range \[\Ran(\tilde{P}) = p_H\Ran(\Phi)(\Id_\mathcal{H}-p_H) = \bigoplus^p_\alpha p_\alpha X_\alpha (\Id_{\mathcal{H}_\alpha}-p_\alpha).\] Since $\Ran(P)$ is non-degenerate, we have \[\operatorname{s}_\ell(\tilde{P}) = p_H\quad \text{and}\quad \operatorname{s}_r(\tilde{P}) = \Id_{\mathcal{H}}-p_H.\]
Assume that there exists $\beta$ such that $p_\beta = 0$. Then $p_\beta X_\beta (\Id_{\mathcal{H}_\beta}-p_\beta) = \{0\}.$
Define $q = (q_\alpha)_\alpha \in B(\H)$ where $q_\alpha = \left\{\begin{array}{c}
    1-p_\alpha \text{  if }\alpha\neq \beta\\
    0 \text{  otherwise.}
\end{array}\right.$ Then $q$ is an orthogonal projection such that $q<\Id_\mathcal{H}-p_H$, and for every $x\in\Ran(\tilde{P})$, $xq=x$. Hence $\Id_\mathcal{H} - p_H = \operatorname{s}_r(\tilde{P}) \leq q$, which is a contradiction.
Similarly, if there exists $\beta$ such that $p_\beta = 1$, we can define $q\in B(\H)$ an orthogonal projection such that $q<p_H$ and $p_H\leq q$, which is a contradiction. 

Now, let us show that for every $\alpha$, we have \begin{equation}\label{Phi_commute_avec_Lp}
    \Phi_\alpha \circ L_{p_\alpha} = L_{p_\alpha}\circ \Phi_\alpha.
\end{equation} Then, the result will follow, by Lemma \ref{corner_preserv}. For a fixed $\alpha$, recall that $0<p_\alpha := \operatorname{s}(x_\alpha) < \Id_{\mathcal{H}_\alpha}$. Let $z_\alpha\in S^p(\H_\alpha)$, and define $z := (z_\beta)_\beta$, where $z_\beta = 0$ if $\beta\neq \alpha$. On one hand, \begin{equation*}\begin{split}
     (\Phi\circ L_{p_H})(z) = \Phi\left((p_\beta z_\beta)_\beta\right) = (0,\ldots,0,\Phi_\alpha(p_\alpha z_\alpha),0,\ldots).
\end{split}
\end{equation*} On the other hand, by Lemma \ref{corner_preserv}, \begin{equation*}\begin{split}(\Phi\circ L_{p_H})(z) = (L_{p_H}\circ\Phi)(z) = p_H\left(\Phi(z)\right) &= p_H\left(0,\ldots,0,\Phi_\alpha(z_\alpha),0,\ldots\right)\\ &= (0,\ldots,0,p_\alpha\Phi_\alpha( z_\alpha),0,\ldots).\end{split}
\end{equation*}
We deduce (\ref{Phi_commute_avec_Lp}).

\end{proof}

\begin{lemma}\label{Lemme1_spin}Let $H$ be a Hilbert space, let $n\geq 1$ an integer. Let $(w_1,\ldots, w_n)$ be a spin system in $B(H)$. Define $J_n :=\operatorname{span}\{1,w_1,\ldots,w_n\}$.
Every unitary $u\in J_n$ may be written as \[u = C(\gamma_0 1 + i \sum_{k=1}^n \gamma_k w_k),\] where $C\in\mathbb C^*$, and $\gamma_0,\ldots,\gamma_n\in\Rdb$.

If $u$ is, in addition, selfadjoint, then \[u=1,\quad \text{or}\quad u = \sum_{k=1}^n \gamma_k w_k.\]
\end{lemma}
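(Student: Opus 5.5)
The plan is to write a general element of $J_n$ with complex coefficients and split the coefficient vector into real and imaginary parts. The anticommutation relations then turn unitarity into a few scalar identities. Write
\[u=\lambda_0 1+\sum_{k=1}^n\lambda_k w_k,\qquad \lambda_k=a_k+ib_k,\ a=(a_k),\,b=(b_k)\in\Rdb^n,\]
and set $A:=\sum_k a_kw_k$, $B:=\sum_k b_kw_k$. These are selfadjoint, and the spin relations of Definition \ref{def_spin} give
\[A^2=|a|^2 1,\qquad B^2=|b|^2 1,\qquad AB+BA=2\langle a,b\rangle 1,\]
with $u=\lambda_0 1+A+iB$.

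A preliminary fact used throughout is that $w_1,\ldots,w_n$ are real-linearly independent: $(\sum c_kw_k)^2=|c|^2 1$ for real $c$, so $\sum c_kw_k=0$ forces $c=0$. Complex independence of $1,w_1,\dots,w_n$ holds as soon as $n\geq 2$, since a scalar cannot anticommute with a nonzero $w_j$. The case $n=1$, where $w_1=\pm1$ is possible, is degenerate and will be handled by hand.

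\textbf{Step 1 (no commutator terms).} Compute $u^*u-uu^*$. Since $u^*=\bar\lambda_0 1+A-iB$, a direct expansion gives
\[u^*u-uu^*=2i(AB-BA).\]
So $u^*u=uu^*=1$ forces $[A,B]=0$. Combined with $AB+BA=2\langle a,b\rangle$, this gives $AB=\langle a,b\rangle 1$.

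\textbf{Step 2 ($a$ and $b$ are proportional).} If $a\neq 0$, multiply $AB=\langle a,b\rangle 1$ on the left by $A$ and use $A^2=|a|^2$. This yields $B=\frac{\langle a,b\rangle}{|a|^2}A$, and by real independence of the $w_k$, $b=ta$ with $t\in\Rdb$. Hence $(\lambda_k)_k=\mu\,(\gamma_k)_k$ with $\mu\in\Cdb$ and $\gamma$ real: if $a\ne0$ take $\gamma=a$ and $\mu=1+it$; if $a=0$ take $\gamma=b$ and $\mu=i$. Then $u=\lambda_0 1+\mu W$ with $W:=\sum_k\gamma_kw_k$ selfadjoint and $W^2=|\gamma|^2 1$.

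\textbf{Step 3 (phases of $\lambda_0$ and $\mu$).} Now
\[u^*u=(|\lambda_0|^2+|\mu|^2|\gamma|^2)1+2\,\mathrm{Re}(\bar\lambda_0\mu)\,W.\]
If $\gamma\neq0$ and $n\geq 2$, then $W$ is not a scalar, so $u^*u=1$ forces $\mathrm{Re}(\bar\lambda_0\mu)=0$. Thus $\mu=i s\lambda_0$ with $s$ real when $\lambda_0\neq0$. Writing $C=\lambda_0$, $\gamma_0=1$ and absorbing $s$ into $\gamma$ gives the claimed form $C(\gamma_0 1+i\sum_k\gamma_kw_k)$. When $\lambda_0=0$, take $C=-i\mu$ and $\gamma_0=0$. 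When $\gamma=0$, $u=\lambda_0 1$ is trivially of this form.

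\textbf{Step 4 (selfadjoint case).} If moreover $u=u^*$, compare $u$ and $u^*$ using independence of $1,w_1,\dots,w_n$:
\[u^*=\bar C\gamma_0 1-i\bar C\sum_k\gamma_kw_k.\]
This gives $C\gamma_0\in\Rdb$ and $iC\gamma_k\in\Rdb$ for all $k$. If both $\gamma_0\ne0$ and some $\gamma_k\neq0$, then $C$ would be simultaneously real and purely imaginary, hence $0$, which is impossible. So either $u$ is a real scalar, hence $\pm1$ by unitarity, or $u=\sum_k(iC\gamma_k)w_k$ with real coefficients, which is the second alternative after renaming.

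The main obstacle is making sure the independence claims are valid. Products $w_jw_k$ may be linearly dependent on the $w_l$ (Pauli-type relations), so the argument must avoid comparing coefficients of $w_jw_k$. The route above only uses real independence of the $w_k$ and the identity $u^*u-uu^*=2i[A,B]$, which sidesteps this. I will double check the case $n=1$ separately. The sign ambiguity $u=-1$ should be absorbed into the statement's convention, via the case $\gamma=0$ of the first form.
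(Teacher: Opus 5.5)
Your proof is correct, and it takes a genuinely different route from the paper's.

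\textbf{The paper's route.} The paper expands $u^*u$ as a combination of $1$, the $w_k$ and the products $w_kw_j$ ($k<j$). It then equates all coefficients to those of $1$, which gives $\Re(\overline{\alpha_0}\alpha_k)=0$ and $\Im(\overline{\alpha_k}\alpha_j)=0$. A phase normalization then produces the form $C(\gamma_0 1+i\sum_k\gamma_kw_k)$. This is a direct computation, but the coefficient comparison tacitly requires $\{1,w_k,w_kw_j\}$ to be linearly independent. That holds for the spin systems the paper actually uses: those generating $B(\La_N)$ behind $\mathcal{E}_{2N}$ and $\mathcal{F}_N$, with $N\geq 2$. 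It fails for an arbitrary spin system, for instance the Pauli matrices, where $w_1w_2=iw_3$.

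\textbf{Your route.} You use $uu^*=1$ in addition to $u^*u=1$ to get $[A,B]=0$. From there you only need two facts:
\begin{itemize}
\item the real independence of the $w_k$, which is automatic from $(\sum_k c_kw_k)^2=|c|^2 1$;
\item the independence of $1$ from a single non-scalar selfadjoint $W$.
\end{itemize}
You never compare coefficients of the quadratic terms, so your argument proves the lemma for every spin system. This is exactly the obstacle you flagged.

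\textbf{Two small remarks.}
\begin{itemize}
\item Your conclusion $u=\pm 1$ in the selfadjoint case is the right one, and the statement's ``$u=1$'' is a slip. The paper's own proof only yields $\lambda\in\Rdb^*$, hence $u=\pm1$. For $n\geq 2$, the element $-1$ is neither $1$ nor of the form $\sum_k\gamma_kw_k$, so it cannot be ``absorbed'' as you suggest: the selfadjoint alternative should simply read $u=\pm 1$. This is harmless for the later use in Lemma \ref{Lemme2_spin}, since $-1$ commutes with everything.
\item The case $n=1$ that you postpone is immediate. If $w_1=\pm 1$, then $J_1=\Cdb 1$ and there is nothing to prove. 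Otherwise $1,w_1$ are linearly independent, $W=\gamma_1w_1$ is non-scalar when $\gamma_1\neq 0$, and your Steps 3--4 go through unchanged.
\end{itemize}
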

\begin{proof}
Let $u\in J_n$ be a unitary. There exist $(\alpha_0,\ldots,\alpha_n)\in\mathbb{C}^{n+1}$ such that \[u = \alpha_0 1 + \sum_{k=1}^n \alpha_k w_k.\] Then using the properties of a spin system (see Definition \ref{def_spin}),  \begin{align*}
    1 = u^*u &= \left(\overline{\alpha_0} 1 + \sum_{k=1}^n \overline{\alpha_k} w_k\right)\left(\alpha_0 1 + \sum_{k=1}^n \alpha_k w_k\right)\\ &= \left(\sum_{k=0}^n |\alpha_k|^2\right) 1+ \sum_{k=1}^n (\overline{\alpha_0}\alpha_k + \alpha_0\overline{\alpha_k})w_k + \sum_{1\leq k\neq j\leq n}\overline{\alpha_k}\alpha_j w_k w_j\\ &= \left(\sum_{k=0}^n |\alpha_k|^2\right) 1+ 2\sum_{k=1}^n \Re(\overline{\alpha_0}\alpha_k) w_k + \sum_{1\leq k< j\leq n}\underbrace{(\overline{\alpha_k}\alpha_j - \alpha_k\overline{\alpha_j})}_{= 2\Im(\overline{\alpha_k}\alpha_j)} w_k w_j.
\end{align*}

This implies \begin{equation}\label{condi_unit_spin}
    \left\{\begin{array}{cccc}
    \Re(\overline{\alpha_0}\alpha_k) & =&0,&\quad k\geq 1\\
    \Im(\overline{\alpha_k}\alpha_j)& = &0,&\quad 1\leq k<j\leq n.
\end{array}\right.
\end{equation}

\begin{itemize}
    \item If $\alpha_0\neq 0$, let $\lambda := \dfrac{\overline{\alpha_0}}{|\alpha_0|}$. Then $\lambda u = |\alpha_0|1 + \sum_{k=1}^n \frac{\overline{\alpha_0}\alpha_k}{|\alpha_0|}w_k$. By (\ref{condi_unit_spin}), for every $k\geq 1$, there exists $\gamma_k \in \Rdb$ such that $\frac{\overline{\alpha_0}\alpha_k}{|\alpha_0|} = i\gamma_k$, and \[u = \overline{\lambda}\left(\gamma_0 1 + i \sum_{k=1} \gamma_k w_k\right),\] with $\gamma_0 = |\alpha_0|\in\Rdb.$
    
    If, in addition, $u$ is selfadjoint, since the $w_k$'s are selfadjoint, we deduce that for every $k\geq 1$, $\left\{\begin{array}{ccc}
        \lambda\gamma_0 &=& \overline{\lambda}\gamma_0  \\
        -\lambda\gamma_k &= & \overline{\lambda}\gamma_k 
    \end{array}\right..$ Since $\gamma_0\neq 0$, we deduce $\lambda\in\Rdb^*$ and the second line implies that $\gamma_k=0$ for every $k\geq 1$, so $u=1$.
    
    \item If $\alpha_0=0$, reordering the $w_j$'s, we may assume that $\alpha_1\neq 0.$ Let $\lambda := \dfrac{\overline{\alpha_1}}{|\alpha_1|}$. Then $\lambda u = |\alpha_1|w_1 + \sum_{k=2}^n \frac{\overline{\alpha_1}\alpha_k}{|\alpha_1|}w_k.$ By (\ref{condi_unit_spin}), for every $k\geq 2$, $\gamma_k:=\frac{\overline{\alpha_1}\alpha_k}{|\alpha_1|}$ is real and
     \[ u = \overline{\lambda}\left(\sum_{k=1}^n \gamma_k w_k\right),\] with $\gamma_1 = |\alpha_1|\in\Rdb.$
    
    If, in addition $u$ is selfadjoint, we have $\alpha_1 = \overline{\alpha_1}$, so $\lambda\in\Rdb$ and we have the desired result.
    \end{itemize}

\end{proof}

\begin{lemma}\label{Lemme2_spin}
Let $H$ be a Hilbert space, let $n\geq 1$ an integer. Let $(w_1,\ldots, w_n)$ be a spin system in $B(H)$. Define $J_n :=\operatorname{span}\{1,w_1,\ldots,w_n\}$. Let $w\in J_n$ be a selfadjoint unitary.
For every $z\in J_n$ such that $wz\in J_n$, we have \[zw = wz.\]
\end{lemma}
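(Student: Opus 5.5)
The plan is to first reduce $w$ to a normal form using Lemma \ref{Lemme1_spin}, and then show that the commutator $[w,z]$ is forced to vanish.

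\textbf{Step 1: normal form for $w$.} Since $w\in J_n$ is a selfadjoint unitary, Lemma \ref{Lemme1_spin} gives two cases: $w=1$ or $w=\sum_{k=1}^n\gamma_kw_k$. In the first case $zw=wz$ trivially. In the second case, $w=\overline{\lambda}\sum_k\gamma_kw_k$ with $\lambda$ real and the $\gamma_k$ real, so we may absorb $\lambda$ and take $\gamma\in\Rdb^n$. The relation $w^2=1$ together with the spin relations $w_kw_j+w_jw_k=2\delta_{kj}1$ then gives $\sum_k\gamma_k^2=1$.

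\textbf{Step 2: expand the commutator.} Write $z=\beta_01+\sum_k\beta_kw_k$ with $\beta\in\Cdb^{n+1}$. Expanding with the spin relations gives
\[
wz=\Big(\sum_k\gamma_k\beta_k\Big)1+\beta_0w+c,\qquad zw=\Big(\sum_k\gamma_k\beta_k\Big)1+\beta_0w-c,
\]
where $c:=\sum_{k<j}(\gamma_k\beta_j-\gamma_j\beta_k)w_kw_j$. Hence $wz-zw=2c$, and the goal becomes $c=0$. The hypothesis $wz\in J_n$ yields $c\in J_n$, say $c=a_01+\sum_ka_kw_k$. Moreover $c=\tfrac12(wz-zw)$ and $w^2=1$ give
\[
wc=\tfrac12(z-wzw)=-cw,
\]
so $c$ anticommutes with $w$. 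Substituting the $J_n$-expansion of $c$ and using the spin relations,
\[
wc+cw=2a_0w+2\Big(\sum_ka_k\gamma_k\Big)1=0 .
\]
Provided $1$ and $w$ are linearly independent, this forces $a_0=0$ and $\sum_ka_k\gamma_k=0$.

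\textbf{Step 3: conclude $c=0$.} I would compute $cw$ in two ways.
\begin{itemize}
\item From $c=\sum_ka_kw_k$ with $a\perp\gamma$, the product $cw$ is a pure combination of the products $w_kw_j$ with $k<j$, with coefficients $a_k\gamma_j-a_j\gamma_k$, and has no scalar part.
\item From $cw=-wc$ and $c=\tfrac12(wz-zw)$, one gets $cw=\tfrac12(wzw-z)$.
\end{itemize}
Comparing with Step 2 (in particular $cw=-c+\tfrac12(\ldots)$), I would try to express $c$ itself as an element of $J_n$ that is simultaneously of degree $2$ in the $w_k$'s. The cleanest route seems to be to compute $c^*c$, or $c^2$, in both representations. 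The degree-one representation $c=\sum_ka_kw_k$ gives
\[
c^2=\Big(\sum_ka_k^2\Big)1+\sum_{k<j}(a_ka_j-a_ja_k)w_kw_j=\Big(\sum_ka_k^2\Big)1 .
\]
The anticommutation with $w$ and the defining formula for $c$ should then force this scalar to be $0$. One also needs $c^*c\ge0$ to vanish; for this I would exploit the fact that $c^*$ has the same structure, with $\beta$ replaced by $\overline\beta$.

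\textbf{Main obstacle.} The delicate step is Step 3. The family $\{1,w_k,w_kw_j\}$ need not be linearly independent: for $n$ odd, $w_1\cdots w_n$ may be a scalar multiple of $1$, as with Pauli matrices, where $\sigma_1\sigma_2=i\sigma_3$. So one cannot just read off $c=0$ from the $J_n$-expansion, and an argument that works for all spin systems is needed. If the $c^2$ computation is not enough, my fallback is to pass to the universal (Clifford) model. There I would show that $wz\in J_n$ forces $\gamma_k\beta_j=\gamma_j\beta_k$ for all $k,j$, i.e.\ $(\beta_k)_{k\ge1}$ is proportional to $\gamma$. That in turn makes $c=0$ in every representation, since the expression for $c$ depends only on these coefficients.
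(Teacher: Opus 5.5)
\textbf{There is a genuine gap, and it cannot be closed.} Your Step 3 looks for an argument valid for every spin system, but in that generality the statement is false, and your own Pauli example refutes it. Take $n=3$, $H=\Cdb^2$ and $w_k=\sigma_k$ the Pauli matrices. Then $J_3=M_2(\Cdb)$. So $w=\sigma_1$ is a selfadjoint unitary in $J_3$, $z=\sigma_2\in J_3$ and $wz=i\sigma_3\in J_3$, whereas $zw=-i\sigma_3\neq wz$.

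In your notation $c=i\sigma_3$. It lies in $J_3$, has $a_0=0$ and $\sum_k a_k\gamma_k=0$, anticommutes with $w$, and satisfies $c^2=-1$, $c^*c=1$. So neither the computation of $c^2$ or $c^*c$ nor anything derived from the anticommutation can force $c=0$. The Clifford-model fallback fails for the same reason. The hypothesis $wz\in J_n$ is a property of the given representation and does not pass to the universal algebra: for universal generators $g_1,g_2,g_3$ one has $g_1g_2\notin\operatorname{span}\{1,g_1,g_2,g_3\}$. So you cannot derive there that $(\beta_k)_{k\geq1}$ is proportional to $\gamma$.

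\textbf{What is missing is an additional hypothesis, not a cleverer argument.} The lemma holds as soon as the products $w_kw_j$, $k<j$, are linearly independent modulo $J_n$. Under that hypothesis your Step 2 already finishes the proof: $c\in J_n$ forces $\gamma_k\beta_j-\gamma_j\beta_k=0$ for all $k<j$, hence $c=0$ and $wz=zw$. The anticommutation argument and Step 3 then become unnecessary.

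This is exactly the paper's proof. It reads off the vanishing of these coefficients from membership in $J_n$, and so uses this independence tacitly. The independence does hold where the lemma is applied:
\begin{enumerate}
\item for $\E_{2N}$;
\item for the spin system $(s_1,\ldots,s_N,s_{-1},\ldots,s_{-N},i^Ns_{-N}s_N\cdots s_{-1}s_1)$ spanning $\F_N$ with $N\geq2$.
\end{enumerate}
In both cases the unit, the generators and their pairwise products are, up to nonzero scalars, distinct elements $s_A$ of the basis of $B(\La_N)$, with $|A|\in\{0,1,2,2N-1,2N\}$. More generally, the structure of complex Clifford algebras shows that the independence fails only when $n=3$ and $w_1w_2w_3$ is a scalar, which is precisely the Pauli situation.
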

\begin{proof}
By Lemma \ref{Lemme1_spin}, we can write \[w = 1,\quad \text{or}\quad w=\sum_{k=1}^n \gamma_k w_k,\] with $\gamma_k\in\Rdb.$ 
Assume that $w\neq 1$, let $z\in J_n$ such that $zw\in J_n$. There exist $(\alpha_k)_{0\leq k\leq n}\in\Cdb^{n+1}$ such that \[z = \alpha_0 1 + \sum_{k=1}^n \alpha_k w_k.\]
Then \[ zw = \alpha_0 w + \left(\sum_{k=1}^n \alpha_k w_k\right)\left(\sum_{k=1}^n \gamma_k w_k\right) = \alpha_0w + \left(\sum_{k=1}^n \alpha_k\gamma_k\right) 1 + \sum_{1\leq k<j\leq n} (\alpha_k\gamma_j - \alpha_j\gamma_k)w_kw_j.\] We deduce that for every $1\leq j\neq k\leq n$, $\alpha_k\gamma_j - \alpha_j\gamma_k = 0$. Then,
 \begin{align*}
    wz = \alpha_0 w + \left(\sum_{k=1}^n \gamma_k w_k\right)\left(\sum_{k=1}^n \alpha_k w_k\right) &= \alpha_0w + \left(\sum_{k=1}^n \gamma_k\alpha_k\right) 1 + \sum_{1\leq k<j\leq n} \underbrace{(\alpha_j\gamma_k - \alpha_k\gamma_j)}_{=0}w_kw_j\\ &= \alpha_0w + \left(\sum_{k=1}^n \gamma_k\alpha_k\right) 1\\ &= zw.
\end{align*}
\end{proof}

\begin{proposition}\label{descriptiondephi_cornerpreserving}Let $\H$ be a Hilbert space, let $1<p<\infty$, and let $\Phi : S^p(\H)\to S^p(\H)$ be a non-zero contractive and positive projection. We assume that $\Phi$ is $M_2$-splitting, \[\Phi = \left[\begin{array}{cc}
    P_1 & P \\
    P^\circ & P_2
\end{array}\right] : S^p(H\overset{2}{\oplus}K)\to S^p(H\overset{2}{\oplus}K),\] with $\H = H\overset{2}{\oplus}K$ and non-zero contractive projections $P_1 : S^p(H)\to S^p(H)$, $P_2:S^p(K)\to S^p(K)$, $P : S^p(K,H)\to S^p(K,H)$. We assume, in addition, that $\Ran(\Phi)$ is non-degenerate and indecomposable.

Then, there exist a Hilbert space $K_1$, a countable index set $I$, a positive injective operator $a\in S^p(K_1)$ and a unitary $U : \ell^2_I(K_1)\to \H$ such that \[\Ran(\Phi)= U(S^p_I\otimes a)U^*.\]
\end{proposition}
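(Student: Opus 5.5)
We apply Theorem \ref{main_B} to the indecomposable, positively 1-complemented, non-degenerate subspace $X:=\Ran(\Phi)\subset S^p(\H)$. By point (1) of Remark \ref{equiv_subspace_1compl}, the positive equivalence can be implemented by a unitary. Hence $X$ is unitarily conjugate to one of the five model spaces (1)--(5), and it suffices to show that the $M_2$-splitting hypothesis rules out every case except case (1) with $O$ conjugate to the identity, which gives $S^p_I\otimes a$. Conjugating by the unitary also transports the projection $p_H$ onto $H$ to a projection $q$ on the model Hilbert space. By Lemma \ref{corner_preserv}, $q$ satisfies $0<q<1$ and $\Phi\circ L_q=L_q\circ\Phi$, and by adjoint-preservation also $\Phi\circ R_q=R_q\circ\Phi$. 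In particular $qX\subset X$ and $Xq\subset X$, so $qXq\subset X$.

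The first key step is to exploit these invariances. Pick $x_0\in X$ positive and injective, which exists by Proposition \ref{existence_h}. In the models, $x_0$ is of the form $u\otimes a$ (or the analogous pair), with $u$ in the relevant $JC^*$-triple. Writing $q$ against the tensor decomposition, the invariance $qX\subset X$ together with the rigidity of $X=\{w\otimes a\}$ forces $q=q_0\otimes 1$ with $q_0$ a nontrivial projection on the first factor. To see this, use that the maps $x\mapsto qx$ and $x\mapsto xq$ commute with the unique contractive projection, whose form $\Id\otimes\varphi_a$ (or its analogue) is known. For case (3) and case (5), $q$ must in addition respect the direct sum decomposition, up to mixing of the two summands. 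The left multiplication by $q_0$ must then preserve the model triple $J\in\{O\S_I,O\A_I,\{(w,w^\top)\},v\E_{2N},\{(vx,\sigma(v)\sigma(x))\}\}$. So we need a nontrivial projection $q_0$ with $q_0J\subset J$ and $Jq_0\subset J$.

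The second step is a case analysis showing that such a $q_0$ exists only when $J$ is the full $S^p_I$, up to unitary conjugation.
\begin{enumerate}
\item For the spin factors $v\E_{2N}$, write $q_0=\tfrac12(1+w)$ with $w$ a selfadjoint unitary. Left multiplication invariance applied to $v$ and to $v^{-1}$-normalized elements puts $w$ in $\E_{2N}$, i.e.\ in $J_{2N}$. Lemma \ref{Lemme1_spin} then gives $w=\sum\gamma_k s_k$. Lemma \ref{Lemme2_spin} shows that $w$ commutes with every $z\in J_{2N}$ such that $wz\in J_{2N}$. Using two-sided invariance, this forces $w$ to commute with all $s_j$, which is impossible since $w$ anticommutes with the orthogonal part of the spin generators. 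This excludes case (4).
\item Case (5) is handled in the same way, after restricting to $\E_{2N}\subset\F_N$ and using that $\sigma$ fixes the $s_j$.
\item For the symmetric and antisymmetric cases, $q_0O\S_I\subset O\S_I$ means that $O^*q_0O$ maps symmetric matrices to symmetric matrices under left multiplication. Testing on the identity and on the matrix units $e_{ij}+e_{ji}$ forces $O^*q_0O$ to be scalar, contradicting $0<q<1$, except in degenerate small cases. The antisymmetric case is analogous, using $e_{ij}-e_{ji}$. The tiny $I$ cases, where the spaces are also spin factors, fall under the spin argument.
\item Case (3) requires $q$ to mix the two summands. Combining left and right invariance with transposition, we show that $X$ is in fact equivalent to $S^p_I\otimes a'$ with $a'=a_1\oplus a_2$, which is again of the desired form.
\end{enumerate}

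The main obstacle will be the first step. We must show rigorously that the commutation $\Phi L_q=L_q\Phi$ descends to invariance of the model triple under a projection of the form $q_0\otimes1$. I would first handle this on the support of $x_0$, using $\Phi(qx_0)=qx_0$, so that $qx_0\in X$ and $x_0q\in X$. I would then use polar decomposition and functional calculus inside the $JC^*$-triple, in the spirit of Lemma \ref{phi_diago_de_cornerpreserving}, to identify the form of $q$. The spin-system computations are then routine given Lemmas \ref{Lemme1_spin} and \ref{Lemme2_spin}.
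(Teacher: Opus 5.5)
\textbf{There is a genuine error: the proposal misidentifies which model of Theorem \ref{main_B} survives, and your Case (3) claim is false.} Your overall plan is sound: classify $\Ran(\Phi)$ by Theorem \ref{main_B}, then use $\Phi\circ L_q=L_q\circ\Phi$ to eliminate models. Your device of showing that no projection $0<q_0<1$ satisfies $q_0J\subset J$ is a workable variant of the paper's argument in Cases (1), (2), (4). The paper instead shows that every corner element $x=p_Hx(\Id-p_H)$ of $\Ran(\Phi)$ vanishes.

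\emph{The surviving model.} The target $S^p_I\otimes a$ is not Case (1) with $O=1$. That space is $\S^p_I\otimes a$, the \emph{symmetric} matrices, and your own item 3 correctly rules it out. In Theorem \ref{main_B}, the full space $S^p_I\otimes a$ arises as the degenerate instance of Case (3) with $a_2=0$ (or $a_1=0$); this is exactly how the paper concludes.

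\emph{Case (3).} You assert that $Y=\{(w\otimes a_1,w^\top\otimes a_2)\}$ is equivalent to $S^p_I\otimes(a_1\oplus a_2)$. For $|I|\ge2$ and $a_1,a_2\neq0$ this is false. Transposition is an anti-automorphism and is not implemented by a unitary. Concretely, the following property is invariant under unitary conjugation: for every positive $y\in Y$, the only $y'\in Y$ with $y'=\operatorname{s}(y)y'(1-\operatorname{s}(y))$ is $y'=0$. It holds in $Y$ (this is the paper's Case 3 computation). It fails in $S^p_I\otimes a'$: take $y=e_{11}\otimes a'$ and $y'=e_{12}\otimes a'$.

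Moreover, $q$ does not mix the two summands. It is the support of the positive element $qx_0q\in X$ (the paper's $x_1$), so it is block diagonal, $q=(p_1\otimes1,p_1^\top\otimes1)$. As written, your analysis excludes every model except one that it handles incorrectly, so it never reaches the conclusion.

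\textbf{How to repair it within your framework.} The repair is short.
\begin{itemize}
\item In Case (3) with $a_1,a_2\neq0$, the inclusion $qX\subset X$ forces $(p_1w)^\top=p_1^\top w^\top$ for all $w\in S^p_I$. Hence $p_1^\top$ commutes with all of $S^p_I$, so $p_1\in\{0,1\}$ and $q\in\{0,1\}$, contradicting $0<q<1$. Therefore one of $a_1,a_2$ vanishes and $\Ran(\Phi)=S^p_I\otimes a$.
\item In Case (5), $q=(p_1\otimes1,p_2\otimes1)$ has two components, and you must rule out that only one of them is trivial. This is again done by comparing the two components of $qX\subset X$.
\item In Case (5), ``restricting to $\E_{2N}$'' does not preserve the invariance $p_1v\F_N\subset v\F_N$. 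Instead, apply Lemmas \ref{Lemme1_spin} and \ref{Lemme2_spin} directly to $\F_N$. It is spanned by $1$ and the spin system $(s_1,\dots,s_{-N},i^Ns_{-N}s_N\cdots s_{-1}s_1)$.
\end{itemize}
Finally, what you call the main obstacle is immediate and needs no knowledge of the form of the projection. The element $qx_0q$ lies in $X$, is positive, and has support $q$. Writing it as $u\otimes a$ with $a$ injective gives $q=\operatorname{s}(u)\otimes1$.
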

\begin{proof}
First of all, note that if a subspace $Y$ is positively equivalent to $\Ran(\Phi)$, then $Y$ is indecomposable and positively 1-complemented with a $M_2$-splitting contractive projection. Indeed, let $\mathcal{K}$ be a Hilbert space, let $U : \mathcal{K}\to \H$ be a unitary, and let $Y$ be a subspace of $S^p(\mathcal{K})$ such that \(Y = U^*\Ran(\Phi)U\). According to (3) of Remark \ref{passage_P_a_Q}, $Y$ positively 1-complemented, and it is straightforward to see that $Y$ is indecomposable and non-degenerate. Moreover, we can write \(\mathcal{K} = U^*(H)\overset{2}{\oplus}U^*(K),\) and the map $Q : x\in S^p(\mathcal{K})\mapsto U^*\Phi(UxU^*)U$ is the contractive and positive projection onto $Y$. Let $U_1 : U^*(H)\to H$ and $U_2 : U^*(K)\to K$ be the restrictions of $U$, then it follows that \[Q = \left[\begin{array}{cc}
    U_1^*P_1(U_1\cdot U_1^*)U_1 & U_1^*P(U_1\cdot U_2^*)U_2 \\
    U_2^*P^\circ(U_2\cdot U_1^*)U_1 & U_2^*P_2(U_2\cdot U_2^*)U_2
\end{array}\right] : S^p(U^*(H)\overset{2}{\oplus}U^*(K))\to S^p(U^*(H)\overset{2}{\oplus}U^*(K)), \] hence $Q$ is $M_2$-splitting.

We will describe $\Ran(\Phi)\subset S^p(\H)$. This subspace is positively equivalent to one of the five types of subspaces described in \cite{B} and reviewed in Theorem \ref{main_B}. According to the previous paragraph, we may examine each type, up to the isometry $U$ involved in the equivalence relation $\simp$. We will show that all types except one cannot be the range of a $M_2$-splitting contractive and positive projection.
Note that according to Remark \ref{ran(P)nondegenimpliesran(Phi)nondegen}, since $\Ran(\Phi)$ is non-degenerate, the subspaces $\Ran(P_1)\subset S^p(H)$ and $\Ran(P_2)\subset S^p(K)$ are non-degenerate.

We have $\Phi = \left[\begin{array}{cc}
    P_1 & P \\
    P^\circ & P_2
\end{array}\right] : S^p(H\overset{2}{\oplus}K)\to S^p(H\overset{2}{\oplus}K),$ with non-zero positive contractive projections $P_1 : S^p(H)\to S^p(H)$, $P_2:S^p(K)\to S^p(K)$, and a non-zero contractive projection $P : S^p(K,H)\to S^p(K,H)$. Consider $h\in\Ran(P_1)$ a positive element with $\operatorname{s}(h) = \Id_H$. This element exists according to Proposition \ref{existence_h}. Now, define \begin{equation}\label{def_x1}
    x_1 := \left[\begin{array}{cc}
    h & 0 \\
     0& 0
\end{array}\right]\in S^p(H\overset{2}{\oplus}K).
\end{equation} This element $x_1$ is positive, belongs to the range of $\Phi$, and $\operatorname{s}(x_1) = p_H$, where $p_H\in B(H\overset{2}{\oplus}K)$ is the orthogonal projection onto $H$. 
For each type, we examine $x_1$ to obtain some information about the projection $p_H$. Then, we consider an arbitrary element $z\in\Ran(P)$, and define \begin{equation}\label{def_x}
    x:= \left[\begin{array}{cc}
    0 & z \\
    0 & 0
\end{array}\right]\in\Ran(\Phi).
\end{equation} We will show that if $\Ran(\Phi)$ is not positively equivalent to a space $S^p_I\otimes a$, then 
$x$ and $z$ are zero. This gives $\Ran(P)=\{0\}$, which contradicts the definition of $\Phi$.

\begin{enumerate}
    \item[\textbf{Case 1.}] Let \( I \) be a countable index set.  Let $a\in S^p(H_1)$ be a positive injective operator on a Hilbert space $H_1$, let $O\in \S_I$ be a symmetric unitary operator. We assume that $$\Ran(\Phi) = O\S^p_I\otimes a \subset S^p(\ell^2_I(H_1)).$$
Since $x_1\in\Ran(\Phi)$, there exists $s_1\in\S^p_I$ such that $x_1 = Os_1\otimes a$. Then, since $a$ is positive and injective, we have \[p_H=\operatorname{s}(x_1) = \operatorname{s}(Os_1)\otimes \Id_{H_1}.\]
Now, note that if $y\in S^p_I$, we have \begin{equation}\label{transpo_supp}
    \operatorname{s}_\ell(y^\top) = (\operatorname{s}_r(y))^\top.
\end{equation} Indeed, if $y = u|y|$ is the polar decomposition of $y$, we have $\operatorname{s}_\ell(y)=uu^*$ and $\operatorname{s}_r(y) = u^*u$. Moreover, $y^\top= |y|^\top u^\top$, hence $(y^\top)^* y^\top = (u^\top)^* (|y|^\top)^2 u^\top$, from which we deduce \[|y^\top| = (u^\top)^* |y|^\top u^\top.\] We also have that $u^\top$ is a partial isometry and that \begin{equation}\label{dec_pol_y}
    y^\top = u^\top|y^\top|. 
\end{equation} Moreover, $\ker(u^\top) = \ker(y^\top)$.
We deduce that the equality (\ref{dec_pol_y}) is the polar decomposition of $y$ and \[\operatorname{s}_\ell(y^\top) = u^\top (u^\top)^* = (u^*u)^\top = (\operatorname{s}_r(y))^\top.\]
Set $p_1 := \operatorname{s}(Os_1)$. Then, we have \[p_1 = \operatorname{s}_\ell(Os_1) = O\operatorname{s}_\ell(s_1)O^*,\quad \text{and  } p_1 = \operatorname{s}_r(Os_1) = \operatorname{s}_r(s_1).\]
We deduce, using the equality $s_1^\top =s_1$, \begin{equation}\label{p_1dansOsym}
    p_1^\top = (\operatorname{s}_r(s_1))^\top = \operatorname{s}_\ell(s_1) = O^* p_1O.
\end{equation}
Consider the element $x\in\Ran(\Phi)$ defined by (\ref{def_x}). There exists $s\in\S^p_I$ such that $x = Os\otimes a$, and we have $x = p_Hx(\Id_\mathcal{H}-p_H) = (p_1\otimes \Id_{H_1})(Os\otimes a)((\Id_{\ell^2_I}-p_1)\otimes \Id_{H_1}) = p_1(Os)(\Id_{\ell^2_I}-p_1)\otimes a$. Then, we have \begin{equation}\label{Os1}
    Os = p_1(Os)(\Id_{\ell^2_I}-p_1).
\end{equation} This implies, taking the transpose of this equality, that \[sO = (\Id_{\ell^2_I}-p_1^\top)sO p_1^\top \overset{(\ref{p_1dansOsym})}{=} (\Id_{\ell^2_I} - O^*p_1O)sOO^*p_1O.\] Hence \(sO = O^*(\Id_{\ell^2_I} - p_1)Osp_1O\), and we deduce \begin{equation}\label{Os2}
    Os = (\Id_{\ell^2_I}-p_1)(Os)p_1.
\end{equation} The identities (\ref{Os1}) and (\ref{Os2}) imply $Os = 0$, so $x=0$. Then, the projection $P$ is zero.

\item[\textbf{Case 2.}] Let $I$ be a countable index set.
Let $a\in S^p(H_1)$ be a positive injective operator on a Hilbert space $H_1$, and let $O\in\A_I$ be an anti-symmetric unitary operator. We assume that \[\Ran(\Phi) = O\A^p_I\otimes a \subset S^p(\ell^2_I(H_1)).\] Using the same approach as before, we obtain that the orthogonal projection onto $H$ is \[p_H = p_1\otimes \Id_{H_1}\] with $p_1\in B(\ell^2_I)$ an orthogonal projection such that \begin{equation}\label{equ_on_p1}
    p_1^\top = O^*p_1O.
\end{equation}
Then, since the element $x= \left[\begin{array}{cc}
    0 & z \\
    0 & 0
\end{array}\right]$ is in $\Ran(\Phi)$, 
there exists $y\in\A^p_I$ such that $x = Oy\otimes a$. Since $p_Hx(\Id_\mathcal{H}-p_H)=x$, we have \[Oy = p_1(Oy)(\Id_{\ell^2_I}-p_1).\] Taking the transpose of this equality and using (\ref{equ_on_p1}), we obtain \[yO = O^*(\Id_{\ell^2_I}-p_1)O(yO)O^*p_1O,\] hence \[O^*(Oy)O = O^*(\Id_{\ell^2_I}-p_1)(Oy)p_1O\] so \[Oy = (\Id_{\ell^2_I}-p_1)(Oy)p_1.\] We deduce that $Oy = 0$, so $x=0$, and $P=0$.

\item[\textbf{Case 3.}]Let $I$ be a countable index set, let $a_1\in S^p(H_1)$, $a_2 \in S^p(H_2)$ be injective and positive operators on Hilbert spaces $H_1$ and $H_2$ respectively. We assume that \[\Ran(\Phi) = \{(w\otimes a_1, w^\top\otimes a_2),\quad w\in S^p_I\}.\]
First, note that if $a_1$ or $a_2$ is zero, the conclusion of Proposition \ref{descriptiondephi_cornerpreserving} follows immediately, we assume henceforth that $a_1$ and $a_2$ are non zero. There exists a positive element $w_1\in S^p_I$ such that the element $x_1$ defined by (\ref{def_x1}) may be written as \(x_1 = (w_1\otimes a_1, w_1^\top\otimes a_2).\) Then the orthogonal projection onto $H$ may be written as \[p_H = \operatorname{s}(x_1) = (\operatorname{s}(w_1)\otimes \Id_{H_1}, \operatorname{s}(w_1^\top)\otimes \Id_{H_2}) \overset{(\ref{transpo_supp})}{=} (p_1\otimes \Id_{H_1}, p_1^\top\otimes \Id_{H_2}),\] where $p_1 = \operatorname{s}(w_1)$. Now, consider $w\in S^p_I$ such that the element $x$ defined by (\ref{def_x}) may be written as \(x = (w\otimes a_1, w^\top\otimes a_2)\). Since $p_Hx(\Id_\mathcal{H}-p_H)=x$, we have \[(p_1w(\Id_{\ell^2_I}-p_1)\otimes a_1, p_1^\top w^\top(\Id_{\ell^2_I}-p_1^\top)\otimes a_2) = (w\otimes a_1, w^\top\otimes a_2).\]
We deduce that \[\left\{\begin{array}{ccc}
    w & = &p_1w(\Id_{\ell^2_I}-p_1) \\
    w^\top& = &p_1^\top w^\top(\Id_{\ell^2_I}- p_1^\top) 
\end{array}\right. .\] So \[\left\{\begin{array}{c}
    w = p_1w(\Id_{\ell^2_I}-p_1) \\
    w = (\Id_{\ell^2_I}-p_1)wp_1
\end{array}\right. ,\] hence \(w=0\) and \(x=0.\) This implies that $P=0$.

\item[\textbf{Case 4.}] Assume that \[\Ran(\Phi) = v\mathcal{E}^p_{2N}\otimes a,\] for $N\geq 2$ an integer, $a\in S^p(H_1)$ a positive injective operator on a Hilbert space $H_1$, and $v\in\mathcal{E}_{2N}$ a unitary.
There exists $e_1\in \mathcal{E}^p_{2N}$ such that $x_1 = ve_1\otimes a$, and we have \[p_H = \operatorname{s}(x_1) = \operatorname{s}(ve_1)\otimes \Id_{H_1}.\] Let $p_1:=\operatorname{s}(ve_1)$. By definition, the space $\mathcal{E}_{2N}$ is $*$-invariant, we deduce that $v^*\in \mathcal{E}_{2N}$, hence the element $1\otimes a\in\Ran(\Phi)$. Since $\Phi = \left[\begin{array}{cc}
    P_1 & P \\
    P^\circ & P_2
\end{array}\right]$,
We can write $1\otimes a := \left[\begin{array}{cc}
    \alpha_1 & \alpha_2 \\
    \alpha_3 & \alpha_4
\end{array}\right]\in S^p(H\overset{2}{\oplus}K)$, with $\alpha_1\in \Ran(P_1)$, $\alpha_2\in \Ran(P)$, $\alpha_3\in \Ran(P^\circ)$ and $\alpha_4\in \Ran(P_2)$. Then \[p_1\otimes a = p_H(1\otimes a)p_H = \left[\begin{array}{cc}
    \alpha_1 &  0\\
    0 & 0
\end{array}\right]\in \Ran(\Phi) = v\mathcal{E}_{2N}\otimes a.\] We deduce that $$p_1\in v\mathcal{E}_{2N}.$$
Since $\Phi$ is $M_2$-splitting, with $\Phi = \left[\begin{array}{cc}
   P_1  & P \\
   P^\circ  & P_2
\end{array}\right]$, we have \[L_{p_H}\circ \Phi = \Phi\circ L_{p_H},\] see Lemma \ref{corner_preserv}. We deduce that \[L_{\Id_{\mathcal{H}}-2p_H}\circ \Phi = \Phi\circ L_{\Id_{\mathcal{H}}-2p_H}.\] Since $\Id_\mathcal{H}-2p_H$ is a unitary, this implies \[(\Id_\mathcal{H} - 2p_H)\Ran(\Phi) = \Ran(\Phi),\] hence \[(1-2p_1)v\mathcal{E}_{2N}\otimes a = v\E_{2N}\otimes a.\]
We deduce that \[(1-2p_1)v\E_{2N} = v\E_{2N}.\]
Note that $\E_{2N}$ and $v\E_{2N}$ are unital $JC^*$-triples, so their are $*$-invariant subspaces (see \cite[Lemma 3.3]{B}). Since $1\in\E_{2N}$, the elements $v$ and $v^*$ are in $v\E_{2N}$. We deduce that $(1-2p_1)v^*\in v\E_{2N}$, hence there exists $e\in\E_{2N}$ such that $(1-2p_1)v^* = ve$. This implies \[1-2p_1 = vev.\] Since $\E_{2N}$ is a $*$-invariant $JC^*$-triple, and $v\in \E_{2N}$ is a unitary, we have $v\E_{2N}v=\E_{2N}$. We deduce that the element $w:= 1-2p_1$ is a selfadjoint unitary in $\E_{2N}$. Since $p_1\in v\E_{2N}$, we also have $w\in v\E_{2N}$, hence $wv\in v\E_{2N}v = \E_{2N}$. According to Lemma \ref{Lemme2_spin}, this implies \begin{equation}\label{wv=vw}
    wv = vw.
\end{equation} Now, consider $x\in\Ran(\Phi)$ defined by (\ref{def_x}), there exists $y\in\E_{2N}$ such that $x = vy\otimes a$. Moreover, $p_Hx(1-p_H) = x$, hence \begin{equation}\label{vt corner}
    p_1(vy)(1-p_1) = vy.
\end{equation} Since $1-2p_1 = w$, $p_1 = \dfrac{1-w}{2}$ and $1-p_1 = \dfrac{1+w}{2}$, so \[\frac{1}{4}((1-w)vy(1+w)) = vy \in v\E_{2N},\] hence \[vy - wvy + vy w -wvyw \in v\E_{2N}.\] But $w, vy\in v\E_{2N}$ which is a $JC^*$-triple, so $vy, w(vy)w$ are in $v\E_{2N}$, hence $wvy - vyw \in v\E_{2N}$. Moreover, $wvy+vyw\in v\E_{2N}$, indeed, $1$, $w$ and $(vy)^*$ are in $v\E_{2N}$, hence $w(vy)1 + 1(vy)w\in v\E_{2N}$.
We deduce that $wvy = \frac{1}{2}\left((wvy+vyw)-(vyw-wvy)\right)\in v\E_{2N}$. Hence $wvy \underbrace{=}_{(\ref{wv=vw})}vwy \in v\E_{2N}$. We deduce that $wy\in\E_{2N}$, and by Lemma \ref{Lemme2_spin}, we have $wy=yw$. Combined with (\ref{wv=vw}), $w$ commutes with $vy$, so $p_1 = \frac{1-w}{2}$ commute with $vy$, and using (\ref{vt corner}), we conclude that $vy=0$, and hence $x=0$. So $P=0$.

\item[\textbf{Case 5.}] Let $a_1\in S^p(H_1)$ and let $a_2\in S^p(H_2)$ be positive and injective operators on Hilbert spaces $H_1, H_2$. Let $N\geq 2$ an integer, let $v\in\F_N$ be a unitary. Assume that \[\Ran(\Phi) = \left\{(vf\otimes a_1, \sigma(v)\sigma(f)\otimes a_2)~|~f\in\F_N\right\},\] with $\sigma : \F_N\to \F_N$ defined by (\ref{defsigma}). We may assume that $a_1\neq 0$. Indeed, if $a_1=0$, then $a_2\neq 0$ and the reasoning is similar. There exists $f_1\in\F_N$ such that the element $x_1$ defined by (\ref{def_x1}) could be written as $x_1 = (vf_1\otimes a_1, \sigma(v)\sigma(f_1)\otimes a_2)$, and we have \[p_H = \operatorname{s}(x_1) = (\operatorname{s}(vf_1)\otimes \Id_{H_1}, s\left(\sigma(v)\sigma(f_1)\right)\otimes \Id_{H_2}).\] Let $p_1:= \operatorname{s}(vf_1)$ and let $p_2:=\operatorname{s}(\sigma(v)\sigma(f_1))$.
Since $v$ and $v^*$ are in $\F_N$, we have \[(1\otimes a_1, 1\otimes a_2)\in \Ran(\Phi).\] As in Case 5, we have \[(p_1\otimes a_1, p_2\otimes a_2) = p_H(1\otimes a_1, 1\otimes a_2)p_H\in\Ran(\Phi).\] We deduce that \[p_1 = vu_1,\quad p_2 = \sigma(v)\sigma(u_1),\] with $u_1\in \F_N$.
Now, as in Case 5, $L_{p_H}\circ \Phi = \Phi\circ L_{p_H}$ and we deduce that \[(\Id_\mathcal{H} - 2p_H)\Ran(\Phi) = \Ran(\Phi).\] 
Since, similarly to the case 5, $v$ and $v^*$ are in $v\F_N$, we deduce that there exists $f\in\F_N$ such that \((1-2p_1)v^*\otimes a_1  =  vf\otimes a_1\) . Since $\F_N$ is a $JC^*$-triple, we obtain \(
    1-2p_1 = vfv \in \F_N.\) We have $w:= 1-2p_1\in \F_N\cap v\F_N$ a selfadjoint unitary. Then, we deduce that $wv\in v\F_Nv = \F_N$. By Lemma \ref{Lemme2_spin}, we obtain that $w$ and $v$ commute.
Now, consider $x\in\Ran(\Phi)$ defined by (\ref{def_x}). There exists $y\in\F_N$ such that $x = (vy\otimes a_1, \sigma(v)\sigma(y)\otimes a_2)$. Since $p_Hx(\Id_\mathcal{H}-p_H) = x$, we have \[p_1(vy)(1-p_1) = vy\quad \text{and}\quad p_2\sigma(v)\sigma(y)(1-p_2) = \sigma(v)\sigma(y).\]
By similar calculations as in Case 5, we obtain that $w$ commute with $v$ and $y$, so $p_1$ commutes with $vy$. Finally, we have $vy=0$, hence $y=0$ and $x=0$, which means that $P=0$.

\end{enumerate}
Finally, we obtain that the only case where $\Phi$ could be a non-zero $M_2$-splitting positive and contractive projection with an indecomposable range is the case where $\Ran(\Phi)$ is positively equivalent to the subspace $S^p_I\otimes a$, for $I$ a countable index set, and $a\in S^p(H_1)$ a positive and injective operator.
\end{proof}

\begin{proposition}\label{finpreuvemain2}
    Let $H,K$ be Hilbert spaces, let $1<p<\infty$, let $X$ be a non-trivial 1-complemented subspace of $S^p(K,H)$, with a contractively 1-pseudo decomposable projection. Then the contractive projection onto $X$ is contractively decomposable, and there exist a set $A$, two families of indices $(I_\alpha)_{\alpha\in A}$ and $(J_\alpha)_{\alpha \in A}$, a family of Hilbert spaces $(H_\alpha)_{\alpha\in A}$, some positive and injective operators $a_\alpha \in S^p(K_\alpha)$ such that\[X\sim \bigoplus_{\alpha\in A}^p S^p_{I_\alpha,J_\alpha}\otimes a_\alpha.\]
\end{proposition}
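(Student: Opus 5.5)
Our plan is to combine Lemma \ref{phi_proj_Sp}, Lemma \ref{phi_diago_de_cornerpreserving} and Proposition \ref{descriptiondephi_cornerpreserving}.

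\textbf{Reduction and the lifted projection.} By Remark \ref{nondegen_subs} and Proposition \ref{equiv_subs}, we may assume that $X=\Ran(P)$ is non-degenerate in $S^p(K,H)$. This reduction preserves the conclusion, since equivalence is transitive. It also preserves contractive decomposability: by Lemmas \ref{axb_dec} and \ref{composi_n_dec} with $n=\infty$, conjugating by partial isometries keeps a projection contractively decomposable. Lemma \ref{phi_proj_Sp} then gives a contractive positive projection
\[\Phi=\left[\begin{array}{cc} P_1 & P\\ P^\circ & P_2\end{array}\right]\quad\text{on } S^p(H\overset{2}{\oplus}K),\]
with $P_1,P_2$ non-zero positive contractive projections having non-degenerate ranges. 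Thus $\Phi$ is $M_2$-splitting and $\Ran(\Phi)$ is non-degenerate.

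\textbf{Block-by-block description of $\Ran(\Phi)$.} Write $\Ran(\Phi)=\bigoplus^p_\alpha X_\alpha$ as a sum of indecomposable, pairwise operator-disjoint subspaces, using (\ref{dec_X}). By Lemma \ref{phi_diago_de_cornerpreserving}, each $X_\alpha$ is the range of an $M_2$-splitting positive contractive projection $\Phi_\alpha$ on $S^p(\H_\alpha)$, where the splitting is given by $p_\alpha$ with $0<p_\alpha<\Id_{\H_\alpha}$. Each $X_\alpha$ is also non-degenerate in $S^p(\H_\alpha)$.

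To apply Proposition \ref{descriptiondephi_cornerpreserving} to $\Phi_\alpha$, we need the off-diagonal block $p_\alpha\Phi_\alpha(\cdot)(1-p_\alpha)$ to be non-zero. This is exactly what the proof of Lemma \ref{phi_diago_de_cornerpreserving} secures, since the non-degeneracy of $\Ran(P)$ forces every $p_\alpha X_\alpha(1-p_\alpha)$ to be non-zero. We expect this verification to be the main obstacle. If an inspection of that proof does not give it directly, we would argue by contradiction. If some block $p_\beta X_\beta(1-p_\beta)$ vanished, the support argument in Lemma \ref{phi_diago_de_cornerpreserving} would produce a projection $q$ strictly smaller than $\operatorname{s}_r(\tilde P)=\Id-p_H$ yet dominating it, which is impossible.

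Proposition \ref{descriptiondephi_cornerpreserving} then gives unitaries $U_\alpha:\ell^2_{I_\alpha}(K_\alpha)\to\H_\alpha$ and positive injective operators $a_\alpha$ with $X_\alpha=U_\alpha(S^p_{I_\alpha}\otimes a_\alpha)U_\alpha^*$. Assembling the $U_\alpha$ into one unitary $U$ gives
\[\Ran(\Phi)=U\Bigl(\bigoplus^p_\alpha S^p_{I_\alpha}\otimes a_\alpha\Bigr)U^*.\]

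\textbf{Conclusion.} By Lemma \ref{dec_space} and Proposition \ref{projcp}, the unique contractive projection onto this range is completely positive. Uniqueness holds because $S^p$ is smooth for $1<p<\infty$, so that projection is $\Phi$ itself. Hence $\Phi$ is completely positive with contractive diagonal entries $P_1,P_2$, and therefore $P$ is contractively decomposable.

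For the description of $X$, note first that $p_H$ commutes with $U(\cdot)U^*$ on each block. Writing $p_H=\bigoplus_\alpha U_\alpha(r_\alpha\otimes \Id)U_\alpha^*$, we expect $r_\alpha$ to be a projection on $\ell^2_{I_\alpha}$: indeed $x_1$ lies in the range, and its support, restricted to block $\alpha$, must have the form $\operatorname{s}(w)\otimes\Id$. Splitting $I_\alpha$ according to $r_\alpha$ into $I'_\alpha\sqcup J'_\alpha$, after a unitary change of basis in $\ell^2_{I_\alpha}$, we get
\[X\simeq p_H\Ran(\Phi)(1-p_H)=\bigoplus^p_\alpha U_\alpha\bigl(S^p_{I'_\alpha,J'_\alpha}\otimes a_\alpha\bigr)U_\alpha^*.\]
This gives $X\sim\bigoplus^p_\alpha S^p_{I'_\alpha,J'_\alpha}\otimes a_\alpha$, as required.
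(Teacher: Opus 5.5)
Your proposal is correct and follows essentially the same route as the paper. You lift $P$ to the $M_2$-splitting projection $\Phi$ via Lemma \ref{phi_proj_Sp}, describe $\Ran(\Phi)$ blockwise with Lemma \ref{phi_diago_de_cornerpreserving} and Proposition \ref{descriptiondephi_cornerpreserving}, obtain complete positivity of $\Phi$ from Lemma \ref{dec_space} and Proposition \ref{projcp}, and read off $X\simeq p_H\Ran(\Phi)(1-p_H)$ from the support of $x_1$. Your extra checks are sound and make explicit points the paper leaves implicit: that each off-diagonal block $p_\alpha X_\alpha(1-p_\alpha)$ is non-zero (by the same support argument as in Lemma \ref{phi_diago_de_cornerpreserving}), that uniqueness of the contractive projection identifies $\Phi$ with the completely positive one, and that contractive decomposability survives the reduction to the non-degenerate case.
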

\begin{proof}
Note that according to Remark \ref{nondegen_subs}, we may assume that $X$ is non-degenerate. Denote by $P:S^p(K,H)\to S^p(K,H)$ the contractively 1-pseudo decomposable projection onto $X$.
According to Lemma \ref{phi_proj_Sp}, there exist positive contractive projections $P_1 : S^p(H)\to S^p(H)$, $P_2:S^p(K)\to S^p(K)$ with non-degenerate ranges such that the linear map $$\Phi := \left[\begin{array}{cc}
    P_1 & P \\
    P^\circ & P_2
\end{array}\right] : S^p(H\overset{2}{\oplus}K)\to S^p(H\overset{2}{\oplus}K)$$ is a contractive, positive and $M_2$-splitting projection.
Then, combining Lemma \ref{phi_diago_de_cornerpreserving} and Proposition \ref{descriptiondephi_cornerpreserving}, we describe the range of $\Phi$ as: \[\Ran(\Phi) = \bigoplus^p_\alpha U_\alpha\left(S^p_{I_\alpha}\otimes a_\alpha\right)U_\alpha^*,\] with $(I_\alpha)_\alpha$ a family of index sets, $a_\alpha \in S^p(H_\alpha)$ some positive and injective operators on Hilbert spaces $H_\alpha$, and $U_\alpha : \ell^2_{I_\alpha}(H_\alpha)\to H\overset{2}{\oplus}K$ an isometry, for every $\alpha$. Note that according to Lemma \ref{dec_space} and Proposition \ref{projcp}, $\Phi$ is completely positive, hence $P$ is contractively decomposable.

The subspaces $\Ran(P)\subset S^p(K,H)$ and $\left[\begin{array}{cc}
    0 & \Ran(P) \\
    0 & 0
\end{array}\right]\subset S^p(H\overset{2}{\oplus}K)$ are equivalent, and \[\left[\begin{array}{cc}
    0 & \Ran(P) \\
    0 & 0
\end{array}\right] = p_H\Ran(\Phi)(I_\mathcal{H} - p_H),\] where $\H := H\overset{2}{\oplus}K$ and $p_H\in B(\H)$ is the orthogonal projection onto $H$.
Now, consider $h\in\Ran(P_1)$ a positive element such that $\operatorname{s}(h) = \Id_H$. This element exists according to Proposition \ref{existence_h}. Let $x_1 := \left[\begin{array}{cc}
    h & 0 \\
    0 & 0
\end{array}\right]$ with $x_1\geq 0$ and $\operatorname{s}(x_1)=p_H$. Since $h$ is in $\Ran(P_1)$, we deduce that $x_1$ is in $\Ran(\Phi)$. There exists $x_\alpha \in S^p_{I_\alpha}$ for each $\alpha$ such that $x_1 = \left(U_\alpha(x_\alpha\otimes a_\alpha)U_\alpha^*\right)_\alpha$. We deduce that \[p_H = \operatorname{s}(x_1) = \left(U_\alpha(\operatorname{s}(x_\alpha)\otimes \Id_{H_\alpha})U_\alpha^*\right)_\alpha.\] We obtain \begin{equation*}\begin{split}
    p_H\Ran(\Phi)(\Id_\mathcal{H}-p_H) &= \bigoplus^p_\alpha U_\alpha\left(\operatorname{s}(x_\alpha)S^p_{I_\alpha}(\Id_{\ell^2_{I_\alpha}} - \operatorname{s}(x_\alpha)\otimes a_\alpha\right)U_\alpha^*\\ &= U\left(\bigoplus^p_\alpha \operatorname{s}(x_\alpha)S^p_{I_\alpha}(\Id_{\ell^2_{I_\alpha}} - \operatorname{s}(x_\alpha))\otimes a_\alpha\right)U^*,
\end{split}\end{equation*} where $U := \left(U_\alpha\right)_\alpha :  \overset{2}{\underset{\alpha}{\bigoplus}} \ell^2_{I_\alpha}(H_\alpha)\to \H$ is an isometry defined by $U\left((\xi_\alpha)_\alpha\right) := \left(U_\alpha(\xi_\alpha)\right)_\alpha$.
We deduce that $\Ran(P)$ is equivalent to the subspace \[\bigoplus^p_\alpha \operatorname{s}(x_\alpha)S^p_{I_\alpha}(\Id_{\ell^2_{I_\alpha}} - \operatorname{s}(x_\alpha))\otimes a_\alpha.\] If we denote $E_\alpha := \Ran(\operatorname{s}(x_\alpha))$ for any $\alpha$ we have the equivalence of spaces \[\operatorname{s}(x_\alpha)S^p_{I_\alpha}(\Id_{\ell^2_{I_\alpha}} - \operatorname{s}(x_\alpha)) \sim S^p(E_\alpha^\perp, E_\alpha),\] hence \[X =\Ran(P) \sim \bigoplus^p_\alpha S^p(E_\alpha^\perp, E_\alpha)\otimes a_\alpha \sim \bigoplus^p_\alpha S^p_{I^1_\alpha, I^2_\alpha}\otimes a_\alpha,\] where $I^1_\alpha$ and $I^2_\alpha$ are two index sets with the same cardinality as $E_\alpha$ and $E_\alpha^\perp$ respectively, for any $\alpha$.
\end{proof}

\begin{remark}
If we are only looking for contractively decomposable projections in $S^p(K,H)$, we could have a faster way, without passing through contractively 1-pseudo-decomposable projections. First of all, for $1\leq p\neq 2<\infty$, according to Remark \ref{appli_cb}, if $P:S^p(K,H)\to S^p(K,H)$ is a contractively decomposable projection, then it is a completely contractive projection. We use \cite[Theorem 1.1]{MRR} to show that, in this case, \[\Ran(P) \sim \bigoplus_{\alpha\in A}^p S^p_{I_\alpha,J_\alpha}\otimes a_\alpha,\] for $A$ a set, and for any $\alpha\in A$, $I_\alpha$ and $J_\alpha$ are index sets, and $a_\alpha\in S^p(H_\alpha)$ is an operator on a Hilbert space $H_\alpha$. 

In the case $p=2$, if $P:S^2(K,H)\to S^2(K,H)$ is a contractively decomposable projection, then we have a contractive $M_2$-splitting and completely positive projection $\Phi:S^2(H\overset{2}{\oplus}K)\to S^2(H\overset{2}{\oplus}K)$ that can be written as \[\Phi := \left[\begin{array}{cc} P_1 & P\\ P^\circ & P_2\end{array}\right],\] with $P_1:S^2(H)\to S^2(H)$ and $P_2:S^2(K)\to S^2(K)$ some completely positive and contractive projections. Combining Lemma \ref{dec_space} and Proposition \ref{projcp} to $\Ran(\Phi)$, we describe this subspace as \[\Ran(\Phi) = \bigoplus^2_{\alpha\in A} U_\alpha(S^p_{I_\alpha}\otimes a_\alpha)U_\alpha^*,\] with $I_\alpha$ a an index set, $a_\alpha \in S^p(H_\alpha)$ a positive and injective operator on a Hilbert space $H_\alpha$, and $U_\alpha : \ell^2_{I_\alpha}(H_\alpha)\to H\overset{2}{\oplus}K$ an isometry, for every $\alpha\in A$. Then, we recover $\Ran(P)$ similarly to the end of the proof of Proposition \ref{finpreuvemain2}.

\end{remark}

\vspace{2cm}
\textbf{Acknowledgment.} The author thanks Cédric Arhancet, who suggested the question that led to this article, and Éric Ricard, for valuable discussions which helped bring this work to completion.


\end{document}